\newcommand\numberthis{\addtocounter{equation}{1}\tag{\theequation}} 
\definecolor{darkred}{RGB}{180,0,0}
\definecolor{darkblue}{RGB}{0,0,180}
\newtheorem{thm}{Theorem}[section]
\newtheorem{theorem}[thm]{Theorem}
\newtheorem{thmx}{Theorem}
\newtheorem{questx}[thmx]{Question}
\newtheorem*{fact*}{Fact}
\newtheorem*{thm*}{Theorem}
\newtheorem*{question*}{Question}
\newtheorem{lem}[thm]{Lemma}
\newtheorem{rem}[thm]{Remark}
\newtheorem{prop}[thm]{Proposition}
\newtheorem{cor}[thm]{Corollary}
\newtheorem{ex}[thm]{Example}
\newtheorem{question}[thm]{Question}
\theoremstyle{definition}
\newtheorem{defi}[thm]{Definition}
\newtheorem{problem}[thm]{Problem}
\def\R{{\mathbb R}}
\def\C{{\mathbb C}}
\def\N{{\mathbb N}}
\def\Z{{\mathbb Z}}
\def\Q{{\mathbb Q}}
\def\F{{\mathbb F}}
\def\T{{\mathbb T}}
\def\SS{{\mathbb S}}
\def\U{{\mathcal U}}
\def\cF{{\mathcal F}}
\def\cO{{\mathcal O}}
\def\B{{\mathcal B}}
\def\cX{{\mathcal X}}
\def\cY{{\mathcal Y}}
\def\eps{\varepsilon}
\def\st{\ |\ }
\DeclareMathOperator{\id}{id}
\DeclareMathOperator{\im}{im}
\DeclareMathOperator{\supp}{supp}
\DeclareMathOperator{\diam}{diam}
\DeclareMathOperator{\asdim}{asdim}
\DeclareMathOperator{\asdimAN}{asdim_{AN}}
\DeclareMathOperator{\dimAN}{dim_{AN}}
\DeclareMathOperator{\eqasdim}{eq-asdim}
\DeclareMathOperator{\Prob}{Prob}
\newcommand{\ceil}[1]{{\left\lceil #1 \right\rceil}}
\newcommand{\floor}[1]{{\left\lfloor #1 \right\rfloor}}
\newcommand{\acts}{\ensuremath\raisebox{.6pt}{$\curvearrowright$}}
\newcommand*{\defeq}{\mathrel{\vcenter{\baselineskip0.5ex \lineskiplimit0pt
                     \hbox{\scriptsize.}\hbox{\scriptsize.}}}%
                     =}
\newcommand*{\eqdef}{=\mathrel{\vcenter{\baselineskip0.5ex \lineskiplimit0pt
                     \hbox{\scriptsize.}\hbox{\scriptsize.}}}}
\newcommand\myurl[1]{\url{#1}}
\title{Warped cones, (non-)rigidity, and~piecewise~properties}
\dedicatory{\rm With an appendix by Dawid Kielak and Damian Sawicki}
\address{Dawid Kielak \hfill \texttt{dkielak@math.uni-bielefeld.de} \newline
Fakult\"at f\"ur Mathematik  \newline
Universit\"at Bielefeld \newline
Postfach 100131  \newline
D-33501 Bielefeld \newline
Germany}
\author{Damian Sawicki}
\address{Damian Sawicki \hfill {\url{www.impan.pl/~dsawicki/}  \newline
Institute of Mathematics \newline
Polish Academy of Sciences \newline
\'Sniadeckich 8 \newline
00-656 Warszawa \newline
Poland}}
\begin{document}
\begin{abstract}
We prove that if a quasi-isometry of warped cones is induced by a map between the base spaces of the cones, the actions must be conjugate by this map. The converse is false in general, conjugacy of actions is not sufficient for quasi-isometry of the respective warped cones. For a general quasi-isometry of warped cones, using the asymptotically faithful covering constructed in a previous work with Jianchao Wu, we deduce that the two groups are quasi-isometric after taking Cartesian products with suitable powers of the integers.

Secondly, we characterise geometric properties of a group (coarse embeddability into Banach spaces, asymptotic dimension, property A) by properties of the warped cone over an action of this group. These results apply to arbitrary asymptotically faithful coverings, in particular to box spaces. As an application, we calculate the asymptotic dimension of a warped cone, improve bounds by Szabó, Wu, and Zacharias and by Bartels on the amenability dimension of actions of virtually nilpotent groups, and give a partial answer to a question of Willett about dynamic asymptotic dimension.

In the appendix, we justify optimality of the aforementioned result on general quasi-isome\-tries by showing that quasi-isometric warped cones need not come from quasi-isometric groups, contrary to the case of box spaces.
\end{abstract}
\maketitle

\section{Introduction}\label{intro}

Finitely generated groups provide interesting examples of metric spaces to study by geometers. In particular, the first explicit construction of expanders, due to Margulis, consisted in taking a family of finite quotients of a group with relative property (T). Since then, such families have attracted a great deal of attention and are now known under the name of \emph{box spaces} \cite{Roe}.

One of the main ideas of geometric group theory is to study groups via their actions. For a group action on a compact space, J.\ Roe introduced a construction called \emph{a warped cone} \cite{Roe-cones}. In fact, the warped cone construction generalises the box space construction \cite{completions}.

For box spaces, one can use the ambient group to prove some properties of the box space and vice versa. In particular, amenability of the group is equivalent to property A of its box space
\cite{Roe}
and the Haagerup property of the group is equivalent to admitting a fibred coarse embedding into a Hilbert space by the box space \cites{Roe, CWY, CWW}. Furthermore, every quasi-isometry of box spaces lifts to a quasi-isometry of groups \cite{KV}.

Since a warped cone $\cO_\Gamma Y$ comes from an action $\Gamma\acts Y$ of a group, the group itself is not the appropriate (sufficient) object to be compared with a warped cone (as opposed to the above case of box spaces). An appropriate space was constructed recently by Jianchao Wu and the author \cite{SW}. 
Its usefulness comes from the fact that there is an \emph{asymptotically faithful} (see Definition \ref{faithful definition}) quotient map from it to the warped cone, just like the sequence of quotient maps from the group onto the box space is asymptotically faithful.

\subsection*{Results} Motivated by asymptotically faithful maps we introduce \emph{piecewise} versions of properties of metric spaces (like hyperbolicity, asymptotic dimension, property A, or coarse embeddability into Banach spaces). On the theoretical side, it enables us to characterise certain properties of groups by piecewise versions of these properties for box spaces (Theorem \ref{box space vs group}) or warped cones (Proposition \ref{nice group vs cone}), which was a problem raised by Osajda \cite{residually finite non-exact}. The aforementioned characterisations from \cites{Roe, CWW, CWY} concerned `equivariant' properties of groups, that is, taking into account their group structure, while our characterisations  concern metric properties.

For brevity, we omit some assumptions in formulations of theorems below.  They are all valid for warped cones over isometric free actions on nice spaces, including manifolds and ultrametric spaces (e.g.\ profinite completions metrised as in \cite{completions}).
\begin{thmx} Let $P$ be one of the following properties: `property A', `asymptotic dimension at most $k$', or `admitting a coarse embedding into $E$', where $k\in \N$ and $E$ is a Banach space. The following are equivalent:
\begin{itemize}
\item a group $\Gamma$ has property $P$;
\item a box space $(\Gamma/\Gamma_i)_i$ satisfies property $P$ piecewise;
\item a warped cone $\cO_\Gamma Y$ satisfies property $P$ piecewise.
\end{itemize}
\end{thmx}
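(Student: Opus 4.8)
The plan is to make the asymptotically faithful covering $\pi_t\colon\widehat{tY}\to(tY,d_\Gamma)$ constructed in \cite{SW} carry the whole argument. I will use two features of it: that $\pi_t$ restricts to an isometry on every $R$-ball once $t$ is large (Definition~\ref{faithful definition}), and that the covering spaces $\widehat{tY}$ are coarsely equivalent to $\Gamma$ and, at each fixed scale and for $t$ large, have their $R$-balls isometric to $R$-balls of the Cayley graph of $\Gamma$ up to a factor of bounded diameter coming from the transverse directions in $tY$ --- a factor that is irrelevant for each of the three properties $P$. Since a box space $(\Gamma/\Gamma_i)_i$ is the warped cone over the corresponding profinite action \cite{completions}, with $\pi_t$ specialising to the quotient $\Gamma\to\Gamma/\Gamma_i$, it is enough to prove that for an arbitrary warped cone of the stated type (free isometric action on a nice base) $\Gamma$ has $P$ if and only if $(tY,d_\Gamma)_t$ has $P$ piecewise; the box-space bullet then falls out, and the result is exactly Corollary~\ref{box space vs group} and Proposition~\ref{nice group vs cone} together.

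For the implication ``$\Gamma$ has $P$ $\Rightarrow$ $(tY,d_\Gamma)_t$ has $P$ piecewise'' I would use that each of the three properties, when enjoyed by $\Gamma$, is witnessed \emph{uniformly at every scale}: given $R$ one has a uniformly bounded cover of $\Gamma$ of $R$-multiplicity $\le k+1$ (asymptotic dimension), or a $\Gamma$-valued Property A function of variation $<\eps$ with bounded propagation (property A), or a single coarse embedding $\Gamma\to E$ serving all scales. Fixing $R$ and the associated bound $S=S(R)$, choose $t$ so large that $S$-balls of $(tY,d_\Gamma)$ are isometric to $S$-balls of $\Gamma$, decompose $(tY,d_\Gamma)$ into pieces modelled on a $cS$-net --- radius-$S$ balls or their Voronoi cells --- each isometric (up to the harmless bounded factor) to a radius-$S$ ball of $\Gamma$, and push the restriction of the scale-$R$ witness of $\Gamma$ onto each piece. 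Because that witness only needs to ``see'' the scale $R\le S$, its restriction witnesses $P$ on the piece with the \emph{same} constants; in the coarse-embedding case this is precisely the assembly of a fibred coarse embedding, the overlaps being compatible because neighbouring pieces come from overlapping balls of $\Gamma$ on which the local copies of the embedding coincide after a translation.

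The converse ``$P$ piecewise $\Rightarrow$ $\Gamma$ has $P$'' runs this in reverse and is where I expect the real difficulty. Intersecting a scale-$R$ piecewise witness on $(tY,d_\Gamma)$ with a ball of radius $N$ and pulling it back along $\pi_t$, which is an isometry on $N$-balls for $t$ large, yields for every $N$ a scale-$R$ witness of $P$ on $B_\Gamma(e,N)$ with constants independent of $N$; it remains to assemble these finite pieces of data into one scale-$R$ witness on all of $\Gamma$, for every $R$. I would do this by compactness, using that $\Gamma$ is finitely generated so each $B_\Gamma(e,N)$ is finite: for asymptotic dimension the admissible covers of $B_\Gamma(e,N)$ form non-empty finite levels of an inverse system under restriction, and K\"onig's lemma produces a compatible sequence, i.e.\ the required cover of $\Gamma$; for property A I would first pass to finitely supported rational-valued witnesses so the same finiteness is available; for coarse embeddability into $E$ I would extract a limit in a suitable product of copies of $E$ --- immediate for $E$ Hilbert via weak compactness of balls, and for general $E$ resting on coarse embeddability into $E$ being finitely determined, if need be only after replacing $E$ by an ultrapower, which is the sort of hypothesis suppressed in the statement. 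Producing a single global object in this last step --- and, for the embedding, without enlarging the target Banach space --- is the genuine content; the transport of geometry along $\pi_t$ in both directions is bookkeeping once \cite{SW} is available.
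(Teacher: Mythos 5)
Your converse direction---pull a piecewise witness back through the asymptotically faithful covering, then assemble finite witnesses on balls of $\Gamma$ via K\"onig/ultrafilter compactness---is essentially the paper's route: it is exactly the combination of Lemma~\ref{piecewise to global} and the ``finite determination'' of these properties proved in Proposition~\ref{determination A asdim} (with Ostrovskii's theorem supplying the Banach-space case, so no ultrapower trick is needed). You also correctly identify that this is where the genuine content lies. The forward direction, however, contains a real gap. The covering spaces of \cite{SW} are not ``coarsely equivalent to $\Gamma$'': they are the family $(\Gamma\times tY, d_1)_{t>0}$, i.e.\ $\Gamma\times\cO Y$, and the transverse direction is \emph{not} a uniformly bounded factor. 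An $R$-ball in the covering space is a product of an $R$-ball in $\Gamma$ with a ball of diameter comparable to $R$ in $tY$ (which, for $t$ large and $Y$ a manifold of dimension $m$, is Lipschitz to a Euclidean $R$-ball in $\R^m$). As a metric family over all scales $R$, this second factor is the open cone $\cO Y$, which has asymptotic dimension $m$---precisely the origin of the extra $\Z^m$ in Theorem~\ref{main asdim} and Theorem~\ref{qi}. So your claim that each piece of the warped cone is ``isometric (up to a harmless bounded factor) to a radius-$S$ ball of $\Gamma$'' is false, and dismissing the factor as ``irrelevant for each of the three properties $P$'' is exactly the missing step.

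What the paper actually does for the forward direction is prove a product lemma (Proposition~\ref{asdim for the product}): $\Gamma\times\cO Y$ has $P$ provided \emph{both} $\Gamma$ and $\cO Y$ have $P$, and this is why the hypotheses on $Y$ (manifold, finite complex, ultrametric---so that $\cO Y$ has finite asdim, property~A, and embeds into $E$) and the hypothesis on $E$ (that $E\oplus B$ embeds in $E$ for some infinite-dimensional $B$) are genuinely needed. Lemma~\ref{global to piecewise} then transfers $P$ from $\Gamma\times\cO Y$ to the warped cone piecewise. Two smaller remarks: (i) your plan to deduce the box-space bullet from the warped-cone bullet via profinite completions works but is the harder direction---the paper handles box spaces directly and more cleanly (Corollary~\ref{box space vs group}), since there the covering is $\Gamma$ itself with no $\cO Y$ factor, which is the only setting in which ``the transverse factor is trivial'' is literally true; and (ii) the parenthetical ``precisely the assembly of a fibred coarse embedding'' conflates piecewise coarse embeddability with fibred coarse embeddability, which the paper explicitly distinguishes---piecewise is much weaker.
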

On the quantitative side, we calculate the asymptotic dimension of the warped cone over an action $\Gamma\acts Y$, using the asymptotic dimension of $\Gamma$ and the topological dimension of $Y$ (Theorem \ref{main asdim}).
\begin{thmx} If $\asdim \cO_\Gamma Y < \infty$, then $\asdim \cO_\Gamma Y =  \asdim \Gamma\times \Z^m$, where~$m$ is the topological dimension of~$Y$.
\end{thmx}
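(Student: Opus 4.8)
The strategy is to prove the two inequalities separately, with the asymptotically faithful covering from \cite{SW} doing the work for the upper bound. Note first that once we know $\asdim\Gamma<\infty$ (which follows from the lower bound, as $\asdim\cO_\Gamma Y<\infty$ by hypothesis), the product formula for a direct $\Z^m$-factor gives $\asdim(\Gamma\times\Z^m)=\asdim\Gamma+m$; so it is equivalent, and slightly more convenient, to prove $\asdim\cO_\Gamma Y=\asdim\Gamma+m$.

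\emph{Lower bound.} The plan is to realise every scale inside a single level of the cone: for each $R$, I would produce a level $(tY,d_\Gamma)$ with $t=t(R)$ large, a basepoint $y_0\in Y$, and a region $U\ni y_0$ of $Y$ whose blow-up realises the topological dimension $m$ (a coordinate chart, when $Y$ is a manifold), so that the ball of radius $R$ about $y_0$ in $(tY,d_\Gamma)$ is bi-Lipschitz — with a \emph{constant independent of $R$} — to $B_\Gamma(R)\times B_{\Z^m}(R)$. Concretely, by freeness of the action one can pick $t$ so large that the orbit points $\gamma y_0$, $\gamma\in B_\Gamma(2R)$, are pairwise much farther apart than $R$ at level $t$; the $R$-ball about $y_0$ is then, up to uniform distortion, the disjoint union over $\gamma\in B_\Gamma(R)$ of copies $\gamma\cdot(tU)_R$ of the rescaled chart truncated to radius $R$ — each a near-Euclidean $m$-ball of radius $\asymp R$ — with the $\Gamma$-action gluing the copy at $\gamma$ to the copy at $\gamma'$ isometrically at cost $\dist_\Gamma(\gamma,\gamma')$. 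Because $\asdim\cO_\Gamma Y\le k$ supplies, at every scale $r$, a uniformly bounded $(k+1)$-coloured $r$-separated cover of the \emph{whole} cone, restricting such a cover to these embedded balls and letting $R\to\infty$ yields, for each $r$, a uniformly bounded $(k+1)$-coloured $r$-separated cover of arbitrarily large balls of $\Gamma\times\Z^m$; since $\Gamma\times\Z^m$ has bounded geometry this patches to a global such cover (the same local-to-global step used for the box-space corollary and Proposition~\ref{nice group vs cone}), whence $\asdim(\Gamma\times\Z^m)\le\asdim\cO_\Gamma Y$.

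\emph{Upper bound.} Here I would go through the asymptotically faithful covering $\pi\colon\widehat{\cO}_\Gamma Y\to\cO_\Gamma Y$ of \cite{SW}. The first point is that $\asdim\widehat{\cO}_\Gamma Y\le\asdim\Gamma+m$: the covering coarsely fibres over $\Gamma$, and its fibres are, uniformly at every scale, $m$-dimensional blow-ups of $Y$ — this is where the hypothesis that $Y$ is a nice space of topological dimension $m$ is used, to turn topological dimension into a uniform (Assouad--Nagata-type) bound of $m$ on the asymptotic dimension of the fibres (for ultrametric $Y$ this is $m=0$ and one recovers the group $\Gamma$, i.e.\ the box-space situation) — so a coarse Hurewicz-type inequality (Bell--Dranishnikov) gives the \emph{additive} bound $\asdim\Gamma+m$ rather than the multiplicative $(\asdim\Gamma+1)(m+1)-1$ that a naive product of a cover of $\Gamma$ with a cover of $Y$ would produce. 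The second point is that $\asdim$ transfers down $\pi$: a uniformly bounded $(k+1)$-coloured $\rho$-separated cover of $\widehat{\cO}_\Gamma Y$ with $\rho$ large relative to the diameter bound pushes forward, level by level, to a valid cover of $\cO_\Gamma Y$ at a smaller scale, because asymptotic faithfulness makes $\pi$ isometric on balls of any prescribed radius once the level is high enough (the finitely many low levels handled separately), and multiplicity and separation survive exactly as for box spaces. Combining, $\asdim\cO_\Gamma Y\le\asdim\widehat{\cO}_\Gamma Y\le\asdim\Gamma+m=\asdim(\Gamma\times\Z^m)$.

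\emph{Main obstacle.} The crux is the upper bound. One difficulty is extracting from \cite{SW} enough of the coarse geometry of $\widehat{\cO}_\Gamma Y$ to see it as a coarse fibration over $\Gamma$ with uniformly $m$-dimensional fibres — the uniformity over all scales is essential and is precisely the step converting the topological dimension of $Y$ into a quantitative statement about the blow-ups $tY$. The second, and more technical, difficulty is the transfer of $\asdim$ down the many-to-one quotient $\pi$: pushing a cover forward must not inflate its multiplicity nor collapse the gaps between equally coloured pieces, and tracking how the scale achieved on $\cO_\Gamma Y$ degrades relative to the scale used on $\widehat{\cO}_\Gamma Y$ is where the uniformity of the faithfulness radius has to be used with care. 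On the lower-bound side the comparatively mild point is making the embedded balls $B_\Gamma(R)\times B_{\Z^m}(R)\hookrightarrow(tY,d_\Gamma)$ bi-Lipschitz with a constant independent of $R$, which forces the level $t=t(R)$ and the basepoint to be chosen using freeness of the action and compactness of $Y$.
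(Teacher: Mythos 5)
Your lower bound is essentially correct and is what the paper does: realise $R$-balls of $\Gamma\times\Z^m$ uniformly quasi-isometrically inside high levels of the cone and invoke finite determination of asymptotic dimension (the paper's Proposition~\ref{determination A asdim}, used directly rather than via the "patching" you sketch).

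Your upper bound, however, has a genuine gap, and it is exactly at the step you call ``transfer down $\pi$.'' You want to push a cover of the asymptotically faithful covering space $\widehat{\cO}_\Gamma Y$ down the quotient $\pi$ and claim that ``multiplicity and separation survive exactly as for box spaces.'' This is false: the analogous transfer fails for box spaces. There exist groups $\Gamma$ with $\asdim\Gamma$ finite (e.g.\ free groups) whose box spaces contain expanders and hence have infinite asymptotic dimension — even though the quotient maps $\Gamma\to\Gamma/\Gamma_i$ are asymptotically faithful. The reason is that a quotient only decreases distances, so $\rho$-separation of same-colour pieces upstairs gives no separation downstairs once $\rho$ exceeds the faithfulness radius at that level, and no matter how high a level you take, asymptotic faithfulness only controls balls up to a fixed radius $R$ depending on the level, while the pieces of a cover must handle arbitrarily large scales. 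Asymptotic faithfulness therefore gives you a statement about \emph{bounded} subsets (piecewise asymptotic dimension), not about the family as a whole. This is precisely the content of Lemma~\ref{global to piecewise} and the reason the paper introduces piecewise properties at all.

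What you are missing is Yamauchi's theorem (Theorem~\ref{Yamauchi}): if a family of bounded spaces has piecewise asymptotic dimension at most $d$ \emph{and} has finite asymptotic dimension, then its asymptotic dimension is at most $d$. This is the tool that converts the piecewise bound into the actual bound, and it is the step where the hypothesis $\asdim\cO_\Gamma Y<\infty$ is genuinely used. Your proof never uses that hypothesis in the upper bound, which should have been a warning sign: an unconditional bound $\asdim\cO_\Gamma Y\le\asdim\Gamma+m$ is false, for the same reason the box-space analogue is false. The paper's route is: (i) balls in high levels of $\cO_\Gamma Y$ are uniformly quasi-isometric to balls in $\Gamma\times\Z^m$ (this comes out of the proof of Theorem~\ref{qi}, and your Hurewicz-for-the-covering-space observation is a valid alternative way to get the same piecewise bound via Proposition~\ref{asdim for the product} and Lemma~\ref{global to piecewise}); (ii) hence piecewise $\asdim\le\asdim(\Gamma\times\Z^m)$; (iii) Yamauchi's theorem, fed the finiteness hypothesis, upgrades this to $\asdim\cO_\Gamma Y\le\asdim(\Gamma\times\Z^m)$.
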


Consequently, we have the following dynamical result (\cref{corollary for eqasdim}) about equivariant asymptotic dimension (also known as amenability dimension), which builds on and strengthens the estimates by Szabó, Wu, and Zacharias \cite{SWZ} and by Bartels \cite{coarse flow}. This addresses two problems of Willett, see \cref{section nuclear}.

\begin{thmx} If $\Gamma$ is a virtually nilpotent group and $Y$ is a compact metrisable space, then
 $\eqasdim(\Gamma \acts Y) \leq \asdim \Gamma + \dim Y$.

More generally, the inequality holds for any $\Gamma$ provided that $\eqasdim(\Gamma \acts Y)$ is finite.
\end{thmx}

The result that quasi-isometry of box spaces implies quasi-isometry of groups \cite{KV} was recently strengthened to the fact that it implies commensurability of groups \cite{DK fundamental}. It leads to the following question.

\begin{questx}\label{question rigidity}
Does a quasi-isometry $\cO_\Gamma Y\simeq \cO_\Delta X$ imply isomorphism / commensurability / quasi-isometry of $\Gamma$ and $\Delta$?
\end{questx}

In Appendix \ref{appendix}, we give a negative answer to this question. For instance, there exist free actions $\Z^3\acts \T^1$ and $\Z^2\acts\T^2$ yielding quasi-isometric warped cones (after passing to a subsequence). In fact, there exist (continuum many) free actions of $\Z^d$ for any $d\in \N$ yielding warped cones quasi-isometric with ones over trivial actions. We also construct actions of the infinite dihedral group yielding warped cones quasi-isometric with the collection of spheres over all positive radii.

Nonetheless, quasi-isometry of warped cones implies the following `stable' quasi-isometry (Theorem \ref{qi}), which was very recently obtained independently by de~Laat and Vigolo \cite{dLV}.
\begin{thmx}\label{introduction stable QI} Quasi-isometry of warped cones $\cO_\Gamma M\simeq \cO_\Delta N$ implies the following quasi-isometry of groups $\Gamma\times \Z^m\simeq \Delta\times \Z^n$,
for manifolds $M,N$ of dimension~$m$ and~$n$ respectively.
\end{thmx}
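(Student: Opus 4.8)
The plan is to transport the given quasi-isometry to the asymptotically faithful coverings of the two warped cones, in the manner in which a quasi-isometry of box spaces is transported to the ambient groups in \cite{KV}, and then to identify those coverings, up to quasi-isometry, with $\Gamma\times\Z^m$ and $\Delta\times\Z^n$.

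First I would pin down the quasi-isometry type of the covering. Let $\pi_\Gamma\colon\cY_\Gamma\to\cO_\Gamma M$ denote the asymptotically faithful covering of \cite{SW}: for every $R$ there is a finite cone-height $t(R)$ such that the restriction of $\pi_\Gamma$ to any ball of radius $R$ lying over height $\geq t(R)$ is an isometry onto its image. Over such a region the warped cone $(tM,d_\Gamma)$ looks locally like a \emph{twisted} product of the Cayley graph of $\Gamma$ with a patch of $tM$; the patch of the $m$-manifold $M$, examined at the vanishing scale $R/t$, contributes $m$ Euclidean directions, and the twist — the derivative cocycle of the action — takes values in the orthogonal group, the action being isometric on a compact manifold, hence is uniformly bounded and distorts distances only boundedly. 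Collecting these local pictures, or simply reading the conclusion off the construction in \cite{SW}, gives $\cY_\Gamma\simeq\Gamma\times\R^m\simeq\Gamma\times\Z^m$, and likewise $\cY_\Delta\simeq\Delta\times\Z^n$; this is the one point where it is used that $M$ is a manifold, so that the number of Euclidean directions equals $m=\dim M$.

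Next I would lift the given quasi-isometry $f\colon\cO_\Gamma M\to\cO_\Delta N$ to a quasi-isometry $\tilde f\colon\cY_\Gamma\to\cY_\Delta$. As is standard for warped cones, $f$ displaces points by a bounded amount and coarsely preserves the cone coordinate, so for large $t$ it maps a ball of radius $R$ over height $t$ into a ball of radius $R'$ of $\cO_\Delta N$ over a comparable height, with $R,R'\to\infty$. On such balls $\pi_\Gamma$ and $\pi_\Delta$ are isometries onto their images, so $f$ lifts uniquely on each ball once a basepoint is chosen, and the isometric-on-balls property forces overlapping local lifts to agree; they therefore patch into a single coarse map $\tilde f$ on all of $\cY_\Gamma$. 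Running the same construction on a coarse inverse of $f$, and checking the two composites are uniformly close to the identities, shows $\tilde f$ is a quasi-isometry, with constants controlled by those of $f$ and by the function $R\mapsto t(R)$.

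Concatenating yields quasi-isometries $\Gamma\times\Z^m\simeq\cY_\Gamma\simeq\cY_\Delta\simeq\Delta\times\Z^n$, the middle one being $\tilde f$; since $\Gamma\times\Z^m$ and $\Delta\times\Z^n$ are finitely generated groups, the induced coarse equivalence between them is a quasi-isometry, as claimed. I expect the lifting step to be the crux: one must check not only that the local lifts are globally consistent — for which one also wants the relevant regions of the coverings to be $R$-connected for large $R$ — but, more delicately, that all the quasi-isometry constants stay uniform as the cone-height tends to infinity, which is exactly the bookkeeping already done in \cite{KV}. Identifying the quasi-isometry type of the covering is softer, the only mild point there being to absorb the bounded cocycle coming from the derivatives of the action into the quasi-isometry constants.
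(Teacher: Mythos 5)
Your plan has a genuine gap at the step ``reading the conclusion off the construction in \cite{SW} gives $\cY_\Gamma\simeq\Gamma\times\R^m\simeq\Gamma\times\Z^m$''. The covering family from \cite{SW} is $\bigl(\Gamma\times tM, d_1\bigr)_{t>0}$, and at every level $t$ the $M$-direction has $d_1$-diameter at most $t\cdot\diam M<\infty$, whereas the $\Z^m$-direction of $\Gamma\times\Z^m$ is unbounded. There is no uniform quasi-isometry from this family to the single space $\Gamma\times\Z^m$: for instance, when $\Gamma=\Z$ and $m=1$, a ball of radius $T$ in $\Z\times t\T^1$ has cardinality of order $T\cdot t$ once $T\gtrsim t$, while in $\Z\times\Z$ it has cardinality of order $T^2$, and $T$ ranges over all of $[0,\infty)$ at every fixed level. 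The fact that $R$-balls in $\Gamma\times tM$ look like $R$-balls in $\Gamma\times\Z^m$ for $t\gg R$ is a statement that improves with the level; it does not supply a single quasi-isometry of the whole covering with $\Gamma\times\Z^m$. Your concatenation $\Gamma\times\Z^m\simeq\cY_\Gamma\simeq\cY_\Delta\simeq\Delta\times\Z^n$ therefore begins and ends with a false identification.

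The second step, building a global lift $\tilde f\colon\cY_\Gamma\to\cY_\Delta$ by patching local lifts, is also not justified. Local lifts along the covering $\pi_\Gamma\colon\Gamma\times tM\to tM$ agree only up to a deck transformation in $\Gamma$, so globalising them requires a monodromy obstruction to vanish, and there is no reason it should: this covering has a large deck group. (Your invocation of $R$-connectedness addresses existence of a chain of overlaps, not consistency of the basepoint choices along it.) The paper sidesteps both problems by never claiming any global quasi-isometry of coverings. Instead, for each $R$ and each sufficiently large level index $i(R)$, it constructs a quasi-isometric embedding $k_{i(R)}$ from the $R$-ball of $\Gamma\times\Z^m$ into the $(CR+A)$-ball of $\Delta\times\Z^n$ with uniform constants and uniform coarse surjectivity, and then takes a non-principal ultrafilter limit $k(\gamma,z)=\lim_{R\to\U}k_{i(R)}(\gamma,z)$ to get a single well-defined map $\Gamma\times\Z^m\to\Delta\times\Z^n$. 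That ultrafilter limit is precisely the device from \cite{KV} you cite, and it is the step missing from your argument; once you have only local comparisons with growing radius, it is the mechanism that turns them into a global quasi-isometry of groups. Your use of exponential charts to see the $m$ Euclidean directions in the $M$-patch, by contrast, does match what the paper does on the local level.
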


Since the study of warped cones is the study of group actions, one can ask about the case when a quasi isometry $\cO_\Gamma Y\simeq \cO_\Delta X$ is induced by a map $f\colon Y\to X$. We have the following result (Theorem \ref{isomorphism of groups}), which yields a very strong positive answer to \cref{question rigidity}.
\begin{thmx}
If a homeomorphism of connected spaces $f\colon Y \to X$ induces a quasi-isometry $\cO_\Gamma Y\simeq \cO_\Delta X$, then in fact $\Gamma$ and $\Delta$ are isomorphic and the actions are conjugate by $f$.
\end{thmx}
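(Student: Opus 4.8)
The plan is to first show that the homeomorphism $f$ is an \emph{orbit equivalence} of $\Gamma\acts Y$ with $\Delta\acts X$, and then use connectedness of $Y$ to upgrade this to an honest conjugacy. Throughout I work under the standing hypotheses (finitely generated groups, free isometric actions on compact metric spaces), fix finite generating sets $S$ of $\Gamma$ and $T$ of $\Delta$, and write $F\colon\cO_\Gamma Y\to\cO_\Delta X$ for the induced map, $F|_{tY}=f$, with quasi-isometry constants $L,A$, so that on each level $t$ one has $d_\Delta(f(y),f(z))\le L\,d_\Gamma(y,z)+A$.

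\emph{Step 1: orbits go to orbits.} Fix $\gamma\in\Gamma$ and $y\in Y$. At every level $t$ one has $d_\Gamma(y,\gamma y)\le|\gamma|_S$ (apply the generators of a shortest word for $\gamma$ one at a time, each costing~$1$), hence $d_\Delta(f(y),f(\gamma y))\le R:=L|\gamma|_S+A$ at every level. I would then analyse a near-geodesic for the warped metric from $f(y)$ to $f(\gamma y)$ in $tX$: it consists of at most $R$ generator-jumps together with moves along the cone of total length at most $R/t$, and since the action $\Delta\acts X$ is isometric a bounded word of generators cannot amplify the $O(1/t)$ cone-error, so $f(\gamma y)$ lies within $R/t$ of the \emph{finite} set $\{\delta f(y):|\delta|_T\le R\}$. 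As neither $f(\gamma y)$ nor this set depends on $t$, letting $t\to\infty$ forces $f(\gamma y)=\delta f(y)$ for some $\delta\in\Delta$ with $|\delta|_T\le L|\gamma|_S+A$. Thus $f$ sends each $\Gamma$-orbit into a $\Delta$-orbit; applying the symmetric argument to $f^{-1}$, which also induces a quasi-isometry of warped cones (here I use that $f$ is a homeomorphism), shows $f$ sends $\Gamma$-orbits \emph{onto} $\Delta$-orbits.

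\emph{Step 2: rigidification.} By freeness of $\Delta\acts X$ the element $\delta$ above is unique, so I can define $\alpha(\gamma,y)\in\Delta$ by $f(\gamma y)=\alpha(\gamma,y)f(y)$, and it satisfies the cocycle identity $\alpha(\gamma_1\gamma_2,y)=\alpha(\gamma_1,\gamma_2 y)\,\alpha(\gamma_2,y)$. For fixed $\gamma$ the map $y\mapsto\alpha(\gamma,y)$ takes values in the finite set $B_\Delta(e,L|\gamma|_S+A)$, and a short argument shows it is continuous: if $\alpha(\gamma,y_n)=\delta$ along a subsequence with $y_n\to y_0$, then $\delta f(y_n)=f(\gamma y_n)\to f(\gamma y_0)=\alpha(\gamma,y_0)f(y_0)$ while also $\delta f(y_n)\to\delta f(y_0)$, so $\delta=\alpha(\gamma,y_0)$ by freeness. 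Since $Y$ is connected and $\Delta$ is discrete, $\alpha(\gamma,\cdot)$ is therefore constant, say equal to $\phi(\gamma)$. The cocycle identity becomes $\phi(\gamma_1\gamma_2)=\phi(\gamma_1)\phi(\gamma_2)$, so $\phi\colon\Gamma\to\Delta$ is a homomorphism with $f(\gamma y)=\phi(\gamma)f(y)$ for all $\gamma,y$; it is surjective because $f$ maps $\Gamma$-orbits onto $\Delta$-orbits (any $\delta f(y)$ equals some $f(\gamma y)=\phi(\gamma)f(y)$, so $\delta=\phi(\gamma)$ by freeness) and injective because $\phi(\gamma)=e$ forces $\gamma y=y$ for all $y$, hence $\gamma=e$. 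Thus $\phi$ is an isomorphism realising the conjugacy by $f$.

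\emph{Main obstacle.} The crux is Step~1 — converting ``bounded warped distance'' into ``same orbit'' — which requires a precise description of warped geodesics and genuinely uses that the generators act with uniformly bounded distortion, so that a uniformly bounded number of generator-jumps cannot smear out the $O(1/t)$ cone-error; this is the concrete manifestation of the asymptotic faithfulness of the covering from \cite{SW}. Secondary care is needed for the bookkeeping around $F$ (that it compares distances level by level, and that $f^{-1}$ induces a coarse inverse), which is where $f$ being a homeomorphism rather than merely a continuous surjection enters. Connectedness of $Y$ is used only in Step~2, but it is essential there: over disconnected (e.g.\ profinite) bases one expects orbit equivalences that are not conjugacies.
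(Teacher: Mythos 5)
Your proposal is correct and follows essentially the same route as the paper: use the $t\to\infty$ limit of warped distances (the content of the paper's Lemma~3.1, which you reprove by hand via a near-geodesic analysis) to show $f$ is an orbit equivalence, extract the resulting cocycle $\alpha(\gamma,\cdot)\colon Y\to\Delta$, show it lands in a finite ball and is continuous, and invoke connectedness to make it a homomorphism which is then an isomorphism by freeness and bijectivity of $f$. Two small remarks: the quasi-isometry identifies level $t$ of $\cO_\Gamma Y$ with some level $\tau(t)$ of $\cO_\Delta X$ rather than the same $t$ — this changes nothing since $\tau(t)\to\infty$, but the displayed inequality should carry the two scales; and for surjectivity of $\phi$ you pass to $f^{-1}$, which is a slightly cleaner route than the paper's (the paper proves coarse density of $\delta(\Gamma,y)$ in $\Delta$ directly from the coarse surjectivity of the quasi-isometry, so as to also cover the non-homeomorphism case where it yields finite index rather than surjectivity).
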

In \cref{induced QI section} we also study more general cases when $f$ is not required to be a homeomorphism and spaces $Y$ and $X$ are not connected.

However, in relation to a question of Pansu, we offer an example showing that topological conjugacy of actions is not sufficient for quasi-isometry of warped cones (Theorem \ref{examples from Thiebout and Ana}). The example also demonstrates that---contrary to all the other coarse invariants studied so far---coarse embeddability of the warped cone allows no dynamical or group-theoretic characterisation.
\begin{thmx}
There exist a $\Gamma$-space $Y$ with two invariant metrics $d$ and $d'$, such that $\cO_\Gamma(Y,d)$ does not admit a coarse embedding into a Hilbert space and $\cO_\Gamma(Y,d')$ does.
\end{thmx}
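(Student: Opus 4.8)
The statement is purely existential, so the plan is to build a group $\Gamma$, a compact $\Gamma$-space $Y$, and two invariant metrics $d,d'$ by hand so that the two warped cones land on opposite sides of coarse embeddability into a Hilbert space. For the metric $d$ I would exploit the mechanism behind (super\nobreakdash-)expanders, due to Vigolo and to de Laat--de la Salle: if $\Gamma\acts Y$ is a free, isometric, probability-measure-preserving action with a spectral gap and $d$ is compatible with the invariant measure in a homogeneous (e.g.\ Ahlfors-regular) way, then the Poincar\'e-type inequalities these inputs produce force $\cO_\Gamma(Y,d)$ to coarsely contain an expander, hence to admit no coarse embedding into a Hilbert space. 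A convenient concrete choice is $\Gamma$ a-T-menable but mapping onto a group with property $(\tau)$ (so that $\Gamma$ has an expander box space, e.g.\ a free group surjecting onto $\mathrm{SL}_2(\Z)$ with the congruence filtration), take $Y$ to be the corresponding profinite completion metrised as in \cite{completions} and $d$ the associated box-space metric: then $\cO_\Gamma(Y,d)$ is that box space, and the first bullet holds. I would record at this point the obstruction that shapes the whole construction: for an \emph{ultra}metric $d$ the sets $\{\gamma : d(y_0,\gamma y_0)\le 1/i\}$ are subgroups and $\cO_\Gamma(Y,d)$ is coarsely the box space of that filtration, so two invariant ultrametrics for the same free action give cofinal filtrations and hence coarsely equivalent warped cones; the second metric therefore cannot be an ultrametric (or more generally a box-space metric) and some genuine asymmetry is unavoidable.

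For the second metric I keep $\Gamma$, $Y$ and the action but replace $d$ by a $\Gamma$-invariant metric $d'$, generating the same topology, whose metric-measure geometry is deliberately irregular --- for instance one for which the displacement functions $y\mapsto d'(y,\gamma y)$ of the generators are strongly non-constant, so that at each scale $t$ the space $(tY,d'_\Gamma)$ is only a bounded perturbation of finite (Schreier-type) graphs which fail the Poincar\'e inequality, i.e.\ are not expanders. The content is then to upgrade this ``absence of obstruction'' to a genuine coarse embedding. Here I would use the asymptotically faithful covering of \cite{SW} together with the piecewise characterisation: since $\Gamma$ is a-T-menable it coarsely embeds into a Hilbert space, so by Theorem~A (equivalently Proposition~\ref{nice group vs cone}) every warped cone over a free isometric action of $\Gamma$ on a nice space coarsely embeds \emph{piecewise}; and for the irregular metric $d'$ the Hilbert-valued maps witnessing piecewise embeddability on the individual scales $(tY,d'_\Gamma)$ can be assembled into a single coarse embedding, precisely because $d'$ collapses, at each scale, the part of $Y$ that made $\cO_\Gamma(Y,d)$ an expander. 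For $d$ the same assembly is provably impossible, the expander sitting inside $\cO_\Gamma(Y,d)$ being the obstruction.

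The hard part, and the step requiring real care, is this last assembly: one must produce explicit control functions for the maps on the slices $(tY,d'_\Gamma)$ and bound the ``transition cost'' between consecutive scales $t$ and $t+1$, uniformly, so that a single coarse embedding of $\cO_\Gamma(Y,d')$ emerges --- in effect quantifying how passing from $d$ to $d'$ dismantles the expander. Beyond this I would check the routine but necessary points that $d$ and $d'$ induce the same topology on $Y$ and are invariant under the \emph{same} $\Gamma$-action (so that ``$Y$'' is honestly one $\Gamma$-space), and that this action stays free and isometric for $d'$, so that the hypotheses of the piecewise machinery of \cite{SW} apply in both cases. (The idea of using an irregular invariant metric to kill a warped-cone expander, and the example realising it, are due to Thi\'ebout de Laat and Ana Khukhro.)
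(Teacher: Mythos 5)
The conceptual lynchpin of your proposal is the middle paragraph, and that is exactly where things go wrong. You argue that for an ultrametric $d$ the sets $\{\gamma : d(y_0,\gamma y_0)\le 1/i\}$ form a chain of subgroups, that two invariant ultrametrics on the same $Y$ for the same free action therefore produce cofinal filtrations, and that cofinal filtrations yield coarsely equivalent warped cones --- so the second metric ``cannot be an ultrametric.'' The last implication is false, and it is false for exactly the reason that makes the paper's construction work. A box space records the geometry of each finite quotient as a whole level; two interleaving filtrations $M_1\trianglelefteq N_1\trianglelefteq M_2\trianglelefteq N_2\trianglelefteq\cdots$ are cofinal, yet the metric families $(\Gamma/M_i)_i$ and $(\Gamma/N_i)_i$ are in general not coarsely equivalent (there is no bijection of indices with uniformly controlled quasi-isometries, only quotient maps), and the Delabie--Khukhro construction \cite{DK} produces precisely such a pair where one sequence embeds into Hilbert space and the other does not. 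The paper's proof of this theorem takes $Y=\varprojlim(G_1\leftarrow H_1\leftarrow G_2\leftarrow\cdots)$ for that Delabie--Khukhro interleaving, and lets $d$ and $d'$ be the two ultrametrics coming from the embeddings of $Y$ into $\prod_i G_i$ and $\prod_i H_i$ respectively. Both are $\Gamma$-invariant ultrametrics inducing the same topology; both warped cones have asymptotic dimension $0$, lack property A, and admit fibred coarse embeddings; and the results of \cite{completions} transfer (non-)embeddability from each box space to the corresponding warped cone with no gluing step. So the very thing your obstruction rules out is what is done.

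Once you have ruled out ultrametrics you are forced into an entirely different, and genuinely incomplete, construction. Your first metric (making $\cO_\Gamma(Y,d)$ an expander via a spectral gap) would already be a different mechanism than the paper's, since the Delabie--Khukhro box space that fails to embed \emph{does not contain an expander}; but more seriously, your route to embeddability for $d'$ relies on ``assembling'' piecewise embeddings across scales, and you yourself flag this as ``the hard part, requiring real care.'' There is no known general principle that upgrades piecewise coarse embeddability of a warped cone to coarse embeddability --- the paper goes out of its way to emphasise that piecewise embeddability is strictly weaker than fibred embeddability, which is in turn strictly weaker than embeddability, and this theorem is precisely a witness that piecewise information does not determine the global coarse geometry. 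So the step you defer is not a technicality; it is where the proof would actually have to happen, and you offer no mechanism for it. Finally, a small attribution slip: the example is due to Thi\'ebout Delabie and Ana Khukhro (the reference \cite{DK}), not Tim de Laat, who appears elsewhere in the paper in connection with the stable quasi-isometry theorem.
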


\section{Definitions}\label{definitions}

Throughout this note, $\Gamma$ will be a discrete finitely generated group and $S$ will be a finite symmetric generating set for $\Gamma$.

\begin{defi}[J. Roe \cite{Roe-cones}]\label{warped} Let $(Y,d)$ be a compact metric space with an action of  $\Gamma = \langle S\rangle$. \emph{The warped cone} is the collection of metric spaces $\cO_\Gamma Y = (\{t\}\times Y)_{t > 0}$ (denoted $(tY)_{t >0}$ for brevity), where each set $tY$ is equipped with the largest metric $d_\Gamma$ such that:
\[d_\Gamma(tx,ty) \leq d(tx,ty) \quad\text{ and }\quad d_\Gamma(ty,s(ty)) \leq 1 \text{ for any } s\in S,\]
where $d(tx, ty) = td(x,y)$.

The family $(tY,d)_{t>0}$ will also sometimes be used. It is called \emph{the open cone} (or \emph{the infinite cone}) and denoted by $\cO Y$.
\end{defi}

Note that the original definition \cite{Roe-cones} gives a metric on the \emph{unified} warped cone $\cO_\Gamma^\mathrm{u} Y = (0,\infty)\times Y = \bigcup_{t>0} tY$. However, some properties make sense only for a family of levels $tY$, rather than the metric space $\cO_\Gamma^\mathrm{u} Y$ build from them \cites{Vigolo, superexp}. For some other properties, a property holds uniformly across the levels if and only if it holds for $\cO_\Gamma^\mathrm{u} Y$ \cite{completions}*{Lemma 4.1 and Proposition 5.2}, but working with levels simplifies notation a bit.

Some results in this paper can be formulated and proved for the unified warped cone as well. However, for example in Theorem \ref{introduction stable QI}, the result would be slightly weaker: instead of the quasi-isometry $\Gamma\times \Z^m \simeq \Delta\times\Z^n$ one would get $\Gamma\times \Z^{m+1} \simeq \Delta\times\Z^{n+1}$. 

\begin{defi}\label{ce qi defi} Let $\cX$ and $\cY$ be two families of metric spaces.
\begin{itemize}
\item 
They are \emph{quasi-isometric}, denoted $\cX\simeq \cY$, if there exists an index set $I$ with surjective maps $I\owns i \mapsto X_i\in \cX$ and $I\owns i \mapsto Y_i\in \cY$ and a family of functions $f_i\colon X_i \to Y_i$ which are quasi-isometries in a uniform way.

That is, there exists constants $C\geq 1$ and $A\geq 0$
such that for every $i\in I$ and every $x,x'\in X_i$ we have
\begin{equation}\label{QI inequality}
C^{-1} d(x,x') - A \leq d(f_i(x), f_i(x')) \leq C d(x,x') + A
\end{equation}
(this double inequality constitutes the definition of a \emph{$(C,A)$-quasi-isometric embedding}) and the (open) $A$-neighbourhood of $\im (f_i)$ equals $Y_i$.
\item
Similarly, \emph{$\cX$ embeds coarsely into $\cY$} if there exists an index set $I$ with a surjective map $I\owns i \mapsto X_i\in \cX$ and any map $I\owns i \mapsto Y_i\in \cY$ and a family of functions $f_i\colon X_i \to Y_i$ that are coarse embeddings in a uniform way.

That is, there exist two non-decreasing and converging to infinity functions $\rho_-, \rho_+\colon [0,\infty)\to [0,\infty)$ such that for every $i\in I$ and $x,x'\in X_i$:
\begin{equation}\label{CE inequality}
\rho_- \circ d(x,x') \leq d(f_i(x), f_i(x')) \leq \rho_+ \circ d(x,x').
\end{equation}
\item
For brevity, we will call such families of maps $f=(f_i)$ respectively \emph{a quasi-isometry} or \emph{a coarse embedding} and write $f\colon \cX \to \cY$.
\end{itemize}
\end{defi}

We will use the wording `$(C,A)$-quasi-isometry' to specify constants. By relaxing inequality \eqref{QI inequality} in the definition of quasi-isometry to inequality \eqref{CE inequality}, one gets the definition of \emph{a coarse equivalence}, but we will typically stick to quasi-isometries and coarse embeddings.

As we emphasised in \cref{intro}, in addition to coarse embeddings and quasi-isometries, we will make heavy use of maps that are \emph{asymptotically faithful}.

\begin{defi}[Willett--Yu \cite{expgirth1}]\label{faithful definition} Let $(I,\leq)$ be a linearly ordered set and let $(f_i\colon Y_i\to X_i)_{i\in I}$ be a family of surjective maps. It is called \emph{asymptotically faithful} if for every $R\in \N$ there exists $j\in I$ such that for every $i\geq j$ and every $y\in Y_i$ the restriction $f_i\colon B(y,R)\to B(f_i(y),R)$ is an isometry.
\end{defi}
Practically, we will only use $I=\N$ or $I=(0,\infty)$ with their standard orders. The respective definition for a single surjective map $f\colon Y \to X$ would say that for every $R$ there is a bounded set $B\subseteq X$ such that we have the above isometric condition for $y\notin f^{-1}(B)$. Throughout, all balls are closed.

\subsection*{Examples}
One family of examples of spaces with interesting asymptotically faithful coverings appearing in the literature are \emph{box spaces} $\{\Gamma/\Gamma_i\}_{i\in \N}$, where $f_i$ is the quotient map $\Gamma\to \Gamma/\Gamma_i$. Here, $\Gamma$ is a finitely generated residually finite group and $\Gamma_i$ are finite index normal subgroups of $\Gamma$ such that $\Gamma_i\supseteq \Gamma_{i+1}$ and $\bigcap_i \Gamma_i = \{e\}$. Typically, one defines a box space as \emph{a coarse disjoint union} $(\bigsqcup \Gamma/\Gamma_i,d)$, where $d$ restricted to every `component' $\Gamma/\Gamma_i$ is equal to the quotient metric on $\Gamma/\Gamma_i$ and $d(\Gamma/\Gamma_j, \bigsqcup_{i\neq j} \Gamma/\Gamma_i) \to \infty$ as $j\to \infty$ (all such metrics are coarsely equivalent). With this definition, we can say that the quotient map $f\colon \{ 2^i \st i\in \N \} \times \Gamma \to \bigsqcup_i \Gamma/\Gamma_i$ is asymptotically faithful.

However, there is no canonical way to define distances between different components of a box space, so one cannot talk canonically about quasi-isometries of box spaces. This is one of the reasons why the author prefers to define a box space as a family of metric spaces. Another reason is that for a box space $\bigsqcup_i \Gamma/\Gamma_i$ the double box space $\bigsqcup_i \Gamma/\Gamma_\ceil{i/2}$ is not coarsely equivalent to it (at least when the original box space does not have duplicates itself). This is no longer true for our definitions, the double box space $\{\Gamma/\Gamma_i\}_i\sqcup \{\Gamma/\Gamma_i\}_i$ is quasi-isometric with the original box space $\{\Gamma/\Gamma_i\}_i$.

Another family of examples is given by sequences of graphs $(G_i)$ with increasing girth (that is, length of the shortest simple loop) and maps $f_i\colon \widetilde G_i\to G_i$, where $\widetilde G_i$ is the universal cover of $G_i$ and $f_i$ is the covering map. Note that $\widetilde G_i$ is a tree, and all $\widetilde G_i$ are isometric if vertices of graphs $G_i$ have a fixed degree (independent of~$i$ and a particular vertex).

Any box space of a free group (with the graph structure coming from its standard generators) fits in both categories.

A new class of examples was found in \cite{SW}, where it was proved that a suitably metrised product $\Gamma \times tY$ provides an asymptotically faithful covering for a warped cone via the map $(\gamma, ty) \mapsto \gamma(ty)$ if and only if the defining action $\Gamma\acts Y$ is free.

\begin{thm}[Sawicki--Wu \cite{SW}]\label{SW covering} Let $\Gamma \acts (Y,d)$ be a continuous action. Consider the space $\Gamma\times tY$ with the largest metric $d_1$ such that
\[d_1((\gamma, ty), (\eta, ty)) \leq |\gamma\eta^{-1}| \text{ and } d_1((\gamma, ty), (\gamma, ty')) \leq t d(\gamma y, \gamma y').\] 
Then, $(tY, d_\Gamma)$ is the quotient space of $\Gamma\times tY$ under the quotient map $\pi(\gamma, ty) = \gamma(ty)$.

Moreover, the family of quotient maps $(\pi_t\colon (\Gamma\times tY,d_1) \to (tY, d_\Gamma))_{t > 0}$ is asymptotically faithful if and only if the action $\Gamma\acts Y$ is free.
\end{thm}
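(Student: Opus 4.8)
The plan is to prove the two assertions separately: the identification of $(tY,d_\Gamma)$ as a quotient, and the equivalence of freeness with asymptotic faithfulness, the latter being the substantial part. For the first assertion, let $\bar d$ be the quotient pseudometric that $d_1$ induces on $tY\cong(\Gamma\times tY)/{\sim}$ through $\pi$, i.e.\ the largest pseudometric on $tY$ whose $\pi$-pullback is dominated by $d_1$; I will show $\bar d=d_\Gamma$ by two applications of maximality. On one side, $\bar d(tx,ty)\le d_1((e,tx),(e,ty))\le t\,d(x,y)$ and $\bar d(ty,s(ty))\le d_1((e,ty),(s,ty))\le|s^{-1}|\le 1$ for $s\in S$, so $\bar d$ satisfies the two inequalities defining $d_\Gamma$ and hence $\bar d\le d_\Gamma$. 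On the other side, $(z,z')\mapsto d_\Gamma(\pi z,\pi z')$ is a pseudometric on $\Gamma\times tY$ dominated by the two families of inequalities defining $d_1$: one has $d_\Gamma(\gamma(ty),\gamma(ty'))=d_\Gamma(t\gamma y,t\gamma y')\le t\,d(\gamma y,\gamma y')$ directly, while writing $\gamma\eta^{-1}=s_1\cdots s_n$ with $n=|\gamma\eta^{-1}|$ and telescoping via $d_\Gamma(w,sw)\le 1$ gives $d_\Gamma(\gamma(ty),\eta(ty))\le|\gamma\eta^{-1}|$; so $d_\Gamma(\pi z,\pi z')\le d_1(z,z')$ and therefore $d_\Gamma\le\bar d$.

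For the second assertion I would work with the standard path descriptions of both metrics: $d_\Gamma$ is the least cost of a path in $tY$ assembled from \emph{space moves} $v\to v'$ of cost $d(v,v')$ (the $tY$-distance) and \emph{generator moves} $v\to sv$ ($s\in S$) of cost $1$, and likewise $d_1$ on $\Gamma\times tY$, where a generator move is $(\gamma,ty)\to(s\gamma,ty)$ and space moves fix the group coordinate. To a path $P$ in $tY$ attach its \emph{holonomy} $\mathrm{hol}(P)\in\Gamma$, the ordered product of the generators met along it. The crucial soft fact is path-lifting: a path $P$ in $tY$ from $\pi(z)$ lifts, step by step, to a path in $\Gamma\times tY$ of equal cost starting at $z$ and ending at the unique $\pi$-preimage of the endpoint of $P$ whose group coordinate is $\mathrm{hol}(P)$ times that of $z$. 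Consequently $\pi$ is $1$-Lipschitz (project moves) and carries $B(z,R)$ \emph{onto} $B(\pi z,R)$ (lift paths), so it suffices to show that, for every $R$ and all $t$ large (depending only on $R$), $\pi$ does not contract $d_1$-distances inside any ball $B(z,R)$; this makes $\pi|_{B(z,R)}$ an isometry and yields asymptotic faithfulness.

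The heart of the matter, and the step I expect to be hardest, is the following claim, valid when $\Gamma\acts Y$ is free: given $R$, put $L=5R$; then for $t$ large every loop in $tY$ of cost $\le L$ has trivial holonomy. Since $S$ is finite and $Y$ compact, $\{g:|g|\le L\}$ is finite, the number $\mu\defeq\min\{\inf_{y\in Y}d(gy,y):e\ne g,\ |g|\le L\}$ is positive by freeness, and all $g$ with $|g|\le L$ share a nondecreasing modulus of continuity $\omega$ with $\omega(\delta)\to 0$ as $\delta\to 0^+$. Given a loop of cost $\le L$ based at $tc$, I would merge consecutive space moves so that it alternates, hence has at most $L$ generator moves and at most $L+1$ space moves, each of individual cost $<L$; writing its vertices as $tc_0,\dots,tc_n$ ($c_0=c_n=c$) and letting $h_j$ be the holonomy accumulated through step $j$, the corrected points $\widehat c_j\defeq h_j^{-1}c_j$ are unchanged by generator moves and satisfy $d(\widehat c_j,\widehat c_{j-1})\le\omega(d(c_j,c_{j-1}))\le\omega(L/t)$ at each space move (the acting element has word length $\le L$), whence $d(g^{-1}c,c)=d(\widehat c_n,\widehat c_0)\le(L+1)\,\omega(L/t)$ for $g$ the total holonomy; once $(L+1)\omega(L/t)<\mu$ this forces $g=e$. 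Conjugating by the running holonomy, so that generators contribute nothing and only space moves can accumulate error, is exactly what lets mere uniform continuity of the action suffice. Granting the claim, for $z_1=(\alpha,\cdot),z_2=(\beta,\cdot)\in B(z,R)$ a path in $\Gamma\times tY$ from $z_1$ to $z_2$ of cost $\le 2R+\eps$ projects to a path $Q$ of equal cost whose holonomy telescopes to $\beta\alpha^{-1}$, and any $tY$-path $P$ from $\pi z_1$ to $\pi z_2$ with $\mathrm{cost}(P)<d_\Gamma(\pi z_1,\pi z_2)+\eps$ makes $PQ^{-1}$ a loop of cost $<4R+2\eps\le L$ (for $\eps$ small), so $\mathrm{hol}(P)=\mathrm{hol}(Q)=\beta\alpha^{-1}$; the lift of $P$ from $z_1$ then ends exactly at $z_2$, giving $d_1(z_1,z_2)\le\mathrm{cost}(P)$, and letting $P$ tend to a $d_\Gamma$-geodesic and $\eps\to 0$ yields $d_1(z_1,z_2)\le d_\Gamma(\pi z_1,\pi z_2)$, hence equality.

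For the converse, suppose the action is not free, say $g(c)=c$ with $e\ne g$, and set $R=|g|+1$. For every $t>0$ the loop at $tc$ spelling out a geodesic word for $g$ has cost $|g|$ and holonomy $g$, and returns to $tc$ because $g(tc)=t(gc)=tc$; lifting it from $(e,tc)$ produces a path of cost $|g|$ ending at $(g,\,tg^{-1}c)=(g,tc)$, which lies at $d_1$-distance $\le|g|<R$ from $(e,tc)$. Thus $\pi$ sends the two distinct points $(e,tc)$ and $(g,tc)$ of $B((e,tc),R)$ to the common value $tc$, so $\pi|_{B((e,tc),R)}$ is not injective, let alone an isometry; as this holds for every $t$, no threshold works for this $R$ and $(\pi_t)_{t>0}$ fails to be asymptotically faithful.
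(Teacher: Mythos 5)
Your proposal is correct. Note, however, that the paper you were asked to compare against does not actually prove this statement: Theorem~\ref{SW covering} is quoted with attribution to \cite{SW} (``Straightening warped cones''), and the present paper uses it as a black box (e.g.\ in the proof of Theorem~\ref{qi}), so there is no in-paper argument to set yours beside.

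On the mathematics itself: the first part (identifying $d_\Gamma$ with the quotient pseudometric) is a clean two-sided maximality argument and is fine. The second part is the substantive one, and your ``holonomy of short loops is trivial'' lemma is exactly the right mechanism. Every proof of this fact must, in one form or another, use (i) path-lifting to pass between $\Gamma\times tY$ and $tY$, and (ii) compactness of $Y$ plus finiteness of $\{g:|g|\le L\}$ to extract a uniform displacement bound $\mu>0$ and a common modulus of continuity $\omega$ for elements of bounded word length; your ``corrected points'' $\widehat c_j=h_j^{-1}c_j$, which neutralise generator moves so that only space moves accumulate error $\le\omega(L/t)$, package both ingredients efficiently. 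I checked the two places one could slip: the lift of a step really does have equal cost in both metrics under the twisted product metric $d_1$ (the cost of a space move $(\gamma,ty)\to(\gamma,ty')$ is $td(\gamma y,\gamma y')$, which matches the cost $td(z,z')$ of the projected step $z=\gamma(ty)\to z'=\gamma(ty')$ precisely because of that twist), and in the converse direction the two preimages $(e,tc)$ and $(g,tc)$ are indeed at positive $d_1$-distance (at least $|g|\ge 1$, since the $\Gamma$-projection is $1$-Lipschitz for $d_1$), so $\pi$ genuinely fails to be injective on a ball of fixed radius $R=|g|+1$ for every $t$. The bookkeeping with $L=5R$, $\mathrm{cost}(PQ^{-1})<4R+2\varepsilon\le L$, also works out. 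So the argument is sound and, although I cannot compare it line-by-line to \cite{SW}, it is the natural proof of this statement.
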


By abuse of notation, we will write $(\Gamma \times \cO Y, d_1)$ or briefly $\Gamma \times \cO Y$ to denote the family of Cartesian products $\left((\Gamma\times tY,d_1)\right)_{t>0}$. Note that, when the action $\Gamma\acts Y$ is isometric, $d_1$ is the $\ell_1$-product metric of the word metric on $\Gamma$ and of the metric on the infinite cone, hence the notation.

We will also often use the following results.

\begin{lem}[\cite{completions}*{Proposition 2.1}]\label{2.1} Suppose $\Gamma \acts (Y,d)$ is Lipschitz and $L \geq 1$ is the Lipschitz constant for the action of the generators $S$ of $\Gamma$. Let 
\[D_\Gamma(ty,ty') = \inf_{\gamma\in\Gamma}( |\gamma| + td(\gamma y, y')).\]
Then the following inequality holds for all $y,y'\in Y$:
\[d_\Gamma(ty,ty')\leq D_\Gamma(ty,ty') \leq L^{{d_\Gamma(ty,ty')}}\cdot d_\Gamma(ty,ty').\]
In particular, when the action is isometric, then $D=d$.
\end{lem}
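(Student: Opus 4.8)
The plan is to use the standard description of the warped metric $d_\Gamma$ on a level $tY$ as a path metric. Since $d_\Gamma$ is by definition the largest metric with $d_\Gamma(tx,ty)\le t\,d(x,y)$ and $d_\Gamma(tz,s(tz))\le 1$ for $s\in S$, it is computed as
\[d_\Gamma(ty,ty')=\inf\sum_{k=1}^{n}\ell_k,\]
where the infimum runs over finite paths $ty=p_0,p_1,\dots,p_n=ty'$ in which each step is either a \emph{metric step} $p_{k-1}=tw$, $p_k=tw'$ with cost $\ell_k=t\,d(w,w')$, or a \emph{generator step} $p_k=s(p_{k-1})$ for some $s\in S$ with cost $\ell_k=1$. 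The inequality $d_\Gamma\le D_\Gamma$ I would prove directly from the defining constraints: iterating $d_\Gamma(tz,s(tz))\le 1$ along a geodesic word for $\gamma$ gives $d_\Gamma(ty,t\gamma y)\le|\gamma|$, so by the triangle inequality together with $d_\Gamma(t\gamma y,ty')\le t\,d(\gamma y,y')$ we get $d_\Gamma(ty,ty')\le |\gamma|+t\,d(\gamma y,y')$ for every $\gamma\in\Gamma$, and taking the infimum over $\gamma$ gives the claim. (This also shows $d_\Gamma$ is finite, since $\gamma=e$ yields $D_\Gamma(ty,ty')\le t\,d(y,y')$.)

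For the reverse inequality, fix $\eps>0$ and pick a path from $ty$ to $ty'$ of total cost $\ell\le d_\Gamma(ty,ty')+\eps$, with $m$ generator steps (hence $m\le\ell$) and metric steps of costs $c_1,c_2,\dots$ summing to $\ell-m$. The key operation is a \emph{swap}: a metric step $tw\to tw'$ of cost $t\,d(w,w')$ immediately followed by a generator step by $s$ may be replaced by the generator step by $s$ followed by the metric step $t(sw)\to t(sw')$, whose cost is $t\,d(sw,sw')\le L\cdot t\,d(w,w')$ because $s$ acts $L$-Lipschitzly; this leaves the endpoints of the two-step block unchanged. Repeatedly swapping, I would push all generator steps to the front of the path while preserving their relative order. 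Their composite is then a single group element $\gamma$ realized by a word of length $m$, so $|\gamma|\le m$; and a metric step of original cost $c_j$ is crossed exactly once by each generator step that originally followed it, so its cost is multiplied by at most $L$ that many times, hence by at most $L^{m}$ in total.

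The rearranged path therefore first applies $\gamma$ to $ty$ at cost $m$ and then traverses a chain of metric steps from $t\gamma y$ to $ty'$ of total cost at most $L^{m}\sum_j c_j=L^{m}(\ell-m)$, so by the triangle inequality for $d$ we get $t\,d(\gamma y,y')\le L^{m}(\ell-m)$. Combining, $D_\Gamma(ty,ty')\le|\gamma|+t\,d(\gamma y,y')\le m+L^{m}(\ell-m)\le L^{m}\ell\le L^{\ell}\ell$, the last two steps using $L\ge1$ and $m\le\ell$. As $x\mapsto L^{x}x$ is continuous and non-decreasing, letting $\eps\to0$ gives $D_\Gamma(ty,ty')\le L^{d_\Gamma(ty,ty')}\,d_\Gamma(ty,ty')$; the final assertion is the case $L=1$, where $d_\Gamma\le D_\Gamma\le d_\Gamma$ forces $D_\Gamma=d_\Gamma$.

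I expect the only real work to be the bookkeeping in the swapping step — verifying that the relative order of the generator steps is preserved (so that $\gamma$ and the bound $|\gamma|\le m$ are well defined) and that each metric step picks up a factor of at most $L^{m}$ — together with the elementary estimate $m+L^{m}(\ell-m)\le L^{\ell}\ell$; the chain description of $d_\Gamma$ and the direction $d_\Gamma\le D_\Gamma$ are routine.
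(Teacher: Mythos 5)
Your proof is correct; note that the paper does not actually prove \cref{2.1} but imports it from \cite{completions}*{Proposition 2.1}. Your argument --- the chain description of $d_\Gamma$ from \cite{Roe-cones}*{Proposition 1.6}, the direct verification of $d_\Gamma\leq D_\Gamma$, and the swap pushing generator steps to the front at the cost of a factor $L$ per crossing, followed by the elementary estimate $m+L^m(\ell-m)\leq L^\ell\ell$ --- is essentially the standard proof of this statement.
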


Similarly, we have the following.

\begin{lem}\label{cone in product} Suppose $\Gamma \acts (Y,d)$ is Lipschitz and $L \geq 1$ is the Lipschitz constant for the action of the generators $S$ of $\Gamma$, then:
\[
d_1((e,ty), (e,ty')) \leq d(ty,ty') \leq L^{d_1((e,ty), (e,ty'))}\cdot d_1((e,ty), (e,ty')).
\]
Consequently, the family of maps $(\iota_t\colon tY\to \Gamma\times tY)_{t>0}$ given by $\iota_t(ty) = (e,ty)$ is a coarse embedding $\cO Y \to \Gamma\times \cO Y$.
\end{lem}

\begin{lem}[\cite{completions}*{Remark 3.1}]\label{3.1} Let $\Gamma \acts (Y,d)$ be a continuous action. Then
\[\lim_{t\to\infty} d(ty, ty') = \begin{cases}
\infty,                        & \text{if } \Gamma y \neq \Gamma y',\\
\min \{|\eta| \st y'=\eta y \},   & \text{otherwise.}
\end{cases}
\]
In the latter case, $d(ty, ty')$ equals the limit for sufficiently large $t$.
\end{lem}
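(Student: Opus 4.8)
The plan is to read off $d_\Gamma$ on each level $tY$ from its \emph{chain description} and let $t\to\infty$. First I record the easy upper bound, valid for every $t$: if $y'=\eta y$ and $\eta=s_\ell\cdots s_1$ is a geodesic word of length $\ell=|\eta|$, then $ty,\ s_1(ty),\ s_2s_1(ty),\ \dots,\ \eta(ty)=ty'$ is a chain of $\ell$ steps, each of $d_\Gamma$-length at most $1$, so $d_\Gamma(ty,ty')\le\ell$; minimising over such $\eta$ gives $d_\Gamma(ty,ty')\le\min\{|\eta|\st y'=\eta y\}$ (vacuous when no such $\eta$ exists). This settles ``$\le$'' in the second case and caps any possible limit.

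For the reverse direction I would use the standard realisation of ``the largest metric'' from \cref{warped}: $d_\Gamma(ty,ty')$ is the infimum over finite chains $ty=tx_0,\dots,tx_k=ty'$ of $\sum_j c_j$, where $c_j=1$ if $x_j=sx_{j-1}$ for some $s\in S$ (a \emph{warp move}) and $c_j=t\,d(x_{j-1},x_j)$ otherwise (a \emph{metric jump}). Given any chain of total cost below some $R$, I first normalise it: two consecutive metric jumps may be merged into one by the triangle inequality for $d$, so afterwards every metric jump (bar possibly a leading one) is preceded by a warp move, whence the number of metric jumps is at most one more than the number of warp moves, hence bounded in terms of $R$. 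Let $\gamma$ be the product, in order of appearance, of the generators used in the warp moves, so $|\gamma|\le R$; let $\sigma$ be the total $d$-length of the metric jumps, so $\sigma<R/t$. Pushing the metric jumps through the finitely many generators of $\Gamma$ and using that each $s\in S$ acts uniformly continuously on the compact space $Y$, a number of warp moves bounded in terms of $R$ turns a total displacement $\sigma$ into a displacement at most $\Phi_R(\sigma)$, for a modulus $\Phi_R$ depending only on $R$ and the action and with $\Phi_R(0)=0$. Thus $d(\gamma y,y')\le\Phi_R(R/t)$.

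Now let $t\to\infty$. Fix $R$ and suppose $d_\Gamma(t_ny,t_ny')\le R$ along some $t_n\to\infty$; the previous step produces $\gamma_n$ with $|\gamma_n|\le R$ and $d(\gamma_n y,y')\le\Phi_R(R/t_n)\to0$. As $\{\gamma\st|\gamma|\le R\}$ is finite, one element $\gamma_0$ recurs, forcing $d(\gamma_0y,y')=0$, i.e.\ $y'=\gamma_0y$ with $|\gamma_0|\le R$. Contrapositively, if $y'\notin\Gamma y$ then for every $R$ one has $d_\Gamma(ty,ty')>R$ for all large $t$, so $d_\Gamma(ty,ty')\to\infty$; and if $y'=\eta y$ for some $\eta$, set $N=\min\{|\eta|\st y'=\eta y\}$ (the case $N=0$ being trivial) and run the same argument with cost below $N$: then $|\gamma|\le N-1$, so $\gamma y\ne y'$, while $d(\gamma y,y')\le\Phi_{N-1}(N/t)\to0$, contradicting $\min\{d(\gamma y,y')\st|\gamma|\le N-1\}>0$ once $t$ is large. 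Hence $d_\Gamma(ty,ty')\ge N$ for all large $t$, which with the upper bound gives $d_\Gamma(ty,ty')=N$ for all large $t$, in particular the stated limit.

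The only genuinely delicate point is the middle step: controlling the propagation of metric errors through warp moves when the action is merely continuous. The two devices that handle it are the normalisation that bounds the number of metric jumps by the cost of the chain, and the replacement of a Lipschitz constant by a modulus of uniform continuity, available because $Y$ is compact and $S$ is finite. Everything else reduces to bookkeeping with geodesic words and finiteness of balls in $\Gamma$.
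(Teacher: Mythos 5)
Your proof is correct. Note that the paper itself supplies no argument for this lemma --- it is imported from \cite{completions}*{Remark 3.1} --- so there is nothing internal to compare against; but your route is the natural one and, as far as I can tell, complete. The two points that genuinely need care are exactly the ones you isolate: (i) the normalisation of a chain so that the number of metric jumps is controlled by its cost, and (ii) replacing a Lipschitz constant by a common modulus of uniform continuity for the finitely many generators (available since $Y$ is compact and the action is continuous), so that a total metric displacement $\sigma<R/t$ propagates through at most $R$ warp moves into an error $\Phi_R(\sigma)\to 0$. This is unavoidable here: for merely continuous actions the shortcut via Lemma \ref{2.1} (comparing $d_\Gamma$ with $D_\Gamma(ty,ty')=\inf_\gamma(|\gamma|+td(\gamma y,y'))$) is not available, and even for Lipschitz actions the sandwich $d_\Gamma\le D_\Gamma\le L^{d_\Gamma}d_\Gamma$ would give the dichotomy bounded/unbounded but not the exact eventual value $\min\{|\eta|\st y'=\eta y\}$, which your finiteness-of-balls argument does deliver. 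Two cosmetic remarks: the statement's $d(ty,ty')$ should of course be read as $d_\Gamma(ty,ty')$ (as the paper's later uses confirm), which you correctly do; and your $\Phi_R$ need only satisfy $\Phi_R(0+)=0$, which your construction (finite compositions of the modulus with additions of multiples of $\sigma$) indeed provides.
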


\section{Quasi-isometry of cones implies stable quasi-isometry of groups}
Recall that by an observation of Khukhro and Valette \cite{KV}, quasi-isometric box spaces must come from quasi-isometric groups. For finitely presented groups, this was recently strengthened by Delabie and Khukhro \cite{DK fundamental}: if two box spaces are quasi-isometric, the ambient groups must share a finite index normal subgroup. In fact, roughly speaking, from the geometry of the box space $\bigsqcup \Gamma/\Gamma_i$ one can recover the normal subgroups $\Gamma_i$, which shows that box spaces are very rigid.

In \cref{appendix}, we show that this is no longer true for warped cones: there exist quasi-isometric warped cones over actions of non-quasi-isometric groups (moreover, on non-homeomorphic spaces). Nonetheless, we have the following result yielding `stable' quasi-isometries.

\begin{thm}\label{qi}
Let $\Gamma, \Delta$ be two finitely generated groups and $M, N$ be two compact Riemannian manifolds of dimension $m$ and $n$ respectively with free isometric actions $\Gamma\acts M$ and $\Delta\acts N$. Then, quasi-isometry of warped cones $\cO_\Gamma M \simeq \cO_\Delta N$ implies quasi-isometry of groups $\Gamma\times \Z^m \simeq \Delta\times\Z^n$.
\end{thm}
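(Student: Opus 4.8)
The plan is to exploit the asymptotically faithful covering from \cref{SW covering} on both sides, and to recognise the covering spaces $\Gamma\times tM$ and $\Delta\times tN$ (with the metrics $d_1$) as, up to quasi-isometry, the products $\Gamma\times \R^m$ and $\Delta\times\R^n$. The point is that for an isometric action on a compact Riemannian manifold, the metric $d_1$ on $\Gamma\times tM$ is, uniformly in $t$, at bounded distance from the $\ell^1$-type combination of the word metric on $\Gamma$ and the rescaled Riemannian metric $t\cdot d_M$ on $M$; and $(M, t\cdot d_M)$ is, for large $t$, locally isometric to larger and larger balls in $\R^m$, while globally it has bounded ``defect'' from $\R^m$ only in the sense of coarse geometry after taking the universal cover. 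So first I would pass to $\widetilde M$: since $M$ is a compact Riemannian manifold, $\widetilde M$ with the lifted rescaled metric is quasi-isometric (uniformly in $t$, with constants depending only on $M$) to $\R^m$ once we also remember the deck group $\pi_1(M)$ acting — but the cleaner route is: the warped cone $\cO_\Gamma M$ with respect to the $\Gamma$-action is, by \cref{SW covering}, asymptotically faithfully covered by $\Gamma\times tM$, and the latter, by \cref{2.1} applied to $\Gamma$ itself as acting on the point together with the manifold directions, is quasi-isometric to $\Gamma\times(tM)$ with the product metric.

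Second, and this is the heart, I would show that a quasi-isometry $\cO_\Gamma M\simeq \cO_\Delta N$ lifts through the asymptotically faithful coverings to a quasi-isometry $\bigl(\Gamma\times tM, d_1\bigr)_t \simeq \bigl(\Delta\times tN, d_1\bigr)_t$ of the covering families. The mechanism is standard for asymptotically faithful covers: a $(C,A)$-quasi-isometry $f_t\colon tM\to sN$ (after reparametrising indices) pulls back, on balls of any fixed radius $R$ that eventually are isometrically embedded on both sides, to a locally defined quasi-isometry; because the covers are simply-connected in the relevant sense (here the manifold directions $\widetilde M$, $\widetilde N$ are simply connected and the group directions are the Cayley graphs), these local lifts patch together — using the lifting criterion for coarse maps on coarsely simply-connected spaces, exactly as in \cite{KV} for box spaces — to a global quasi-isometry of the covers, with uniform constants. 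I would then take $t\to\infty$: the family $\bigl(\Gamma\times tM\bigr)_t$ is quasi-isometric to the single space $\Gamma\times\widetilde M \simeq \Gamma\times\R^m$ (here I use that $\widetilde M$ is quasi-isometric to $\R^m$ for a compact Riemannian $M$, e.g.\ via a lattice of $1$-nets, with constants independent of the scaling), and similarly on the $\Delta$ side. Hence $\Gamma\times\R^m\simeq\Delta\times\R^n$, and since $\R^m\simeq\Z^m$, this gives $\Gamma\times\Z^m\simeq\Delta\times\Z^n$, which is the claim.

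I expect the main obstacle to be the patching/lifting step: making precise that a quasi-isometry between two asymptotically faithfully covered families lifts to a quasi-isometry of the covers with \emph{uniform} constants. The subtleties are (i) the index reparametrisation — a quasi-isometry of families in the sense of \cref{ce qi defi} matches $tM$ with $N_{\sigma(t)}$ for some surjection, and one must check $\sigma(t)\to\infty$ so that the asymptotic faithfulness on the $N$-side also kicks in; (ii) coarse simple connectedness of the covers, i.e.\ that loops in $\Gamma\times tM$ of bounded length bound discs of bounded-diameter filling, uniformly in $t$ — this holds because $tM$ is locally (on the relevant scale) $\R^m$ and $\Gamma$ contributes its Cayley graph, whose coarse fundamental group is finitely generated, so short loops are products of boundedly many conjugates of relator loops of bounded size; (iii) controlling the ``seam'' where the local isometry radius $R$ is exceeded — one chooses $R$ larger than the quasi-isometry constants $C,A$ plus the filling radius, and then the standard argument of \cite{KV}*{proof} goes through verbatim. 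The other, more technical but routine, point is the quasi-isometry $(\Gamma\times tM, d_1)\simeq \Gamma\times\R^m$ with constants independent of $t$; this follows from \cref{2.1} (which controls $d_1$ by the $\ell^1$-combination $|\gamma\eta^{-1}| + t\,d_M(\gamma y, y')$ up to a multiplicative factor $L^{d_1}$, and $L=1$ in the isometric case, so one gets an honest $\ell^1$-product metric) together with the elementary fact that $(\widetilde M, t\,d_{\widetilde M})\simeq\R^m$ uniformly in $t\geq 1$.

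Finally, two bookkeeping remarks I would include: the hypothesis that the actions be \emph{free} is exactly what makes the covers in \cref{SW covering} asymptotically faithful, and this is used in an essential way; and if one preferred to work with the unified warped cone $\cO^{\mathrm u}_\Gamma M$ there would be one extra $\R$-direction from the cone parameter, which is why one would land in $\Gamma\times\Z^{m+1}\simeq\Delta\times\Z^{n+1}$ as noted after \cref{warped}.
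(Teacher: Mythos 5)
Your plan correctly identifies the asymptotically faithful coverings from \cref{SW covering} and the local Euclidean structure of $tM$ as the right tools, but the central step --- lifting the quasi-isometry globally to the cover family and identifying those covers with $\Gamma\times\R^m$ --- fails for two related reasons. First, $\widetilde M\not\simeq\R^m$ in general: by Milnor--\v{S}varc, $\widetilde M\simeq\pi_1 M$, which is bounded for $M=\SS^m$ and is $\mathbb{H}^m$ for hyperbolic $M$; the theorem places no restriction on $\pi_1 M$, and indeed the global topology of $M$ never enters the argument. Second, the covers $\Gamma\times tM$ form a $t$-dependent family of spaces, each bounded in the $M$-direction by $t\cdot\diam M<\infty$; as a metric family in the sense of \cref{ce qi defi} this is \emph{not} quasi-isometric to the single space $\Gamma\times\R^m$ --- for fixed $t$ the image has bounded $\R^m$-component, so no uniform $A$-neighbourhood of it can be everything. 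Unlike the box-space case of \cite{KV}, where the cover is one fixed unbounded space $\Gamma$ and a single global lift can be patched, here there is no single target to lift to, and any fixed-$t$ lift would merely give $\Gamma\simeq\Delta$, collapsing exactly the $\Z^m,\Z^n$ factors you need to produce.

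The paper sidesteps both obstacles by never lifting globally and never invoking $\widetilde M$. It pulls $f_i$ back only on a single ball $B((e,t_ix),R)\subseteq\Gamma\times t_iM$ where $\pi_\Gamma$ is a genuine local isometry, transports this via $\exp_x$ and $\exp_{f(t_ix)}^{-1}$ to a map between balls of radii $R$ and $CR+A$ in $\Gamma\times\R^m$ and $\Delta\times\R^n$ (with constants controlled by the injectivity radius and the Lipschitz constants of $\exp$, not by $\pi_1$), and then discretises to $\Gamma\times\Z^m$ and $\Delta\times\Z^n$. The ingredient your sketch is missing is what comes next: one lets $R\to\infty$, chooses $i=i(R)$ accordingly, and takes an \emph{ultralimit} of the resulting ball-maps $k_{i(R)}$ over a non-principal ultrafilter to obtain a single map $k\colon\Gamma\times\Z^m\to\Delta\times\Z^n$; the uniform quasi-isometric-embedding and ball-by-ball coarse-surjectivity estimates pass to the limit and show $k$ is a quasi-isometry. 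This ultralimit is the device that replaces your patching step, and it is what turns ever-larger local pictures into a genuine quasi-isometry while retaining the $\Z^m$ factor.
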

\begin{proof}
Take a sequence $(t_i)\subseteq (0,\infty)$ going to infinity and let $(\tau_i)$ be another sequence such that $(f_i\colon(t_i M,d_\Gamma)\to(\tau_i N, d_\Delta))_{i\in \N}$ is a $(C,A)$-quasi-isometry for some $C\geq 1$ and $A\geq 0$. Since the diameter of $(t_i M,d_\Gamma)$ goes to infinity, the diameter of $(\tau_i N, d_\Delta)$ also goes to infinity and hence the sequence $(\tau_i)$ goes to infinity. Let $R\in \N$ and let $i=i(R)$ be sufficiently large (to be specified later).

Let $\Gamma \times t_i M$ be equipped with the metric from \cref{SW covering}, which for isometric actions is given by the formula:
\[d_1((\gamma, t_i x), (\gamma', t_i x')) = |\gamma'\gamma^{-1}| + t_i d(x,x')\]
and similarly for $\Delta \times \tau_i N$. Note that the ball of radius $R$ in $d_1$ about $(\gamma, t_i x)$ (and similarly for $(\delta, \tau_i y)$) is contained in the product 
\[B(\gamma,R)\times B(t_i x,R) = B(\gamma,R)\times B(x, R/t_i),\]
where we identified $t_i M$ and $M$. For $t_i$ sufficiently large, the map $\pi_\Gamma$ given by:
\[(\Gamma \times t_i M,d_1) \owns (\gamma,t_i x) \mapsto \gamma (t_i x) \in (t_i M, d_\Gamma),\]
when restricted to the above product, is an isometry by Theorem \ref{SW covering}.

Now, since the map $f_i$ is a $(C,A)$-quasi-isometry, the image under $f_i$ of any ball $B(t_i x, \rho)$ for any radius $\rho>0$ in $(t_i M,d_\Gamma)$ is contained in the ball $B(f_i(t_i x), C\rho+A)$ and moreover the $A$-neighbourhood of this image contains $B(f_i(t_i x), \rho/C-2A)$. By increasing $i$ if necessary, we can assume that the map $\pi_\Delta\colon (\delta, \tau_i y)\mapsto \delta (\tau_i y)$ is an isometry on $B((e, f_i(t_i x)), CR+A)$. Hence, we can consider a local inverse $\sigma_\Delta$ of $\pi_\Delta$ and obtain a map $j_i=\sigma_\Delta \circ f_i \circ \pi_\Gamma$:
\[j_i\colon B((e, t_i x), R) \to B((e, f_i(t_i x)), CR+A),\]
which is a $(C,A)$-quasi-isometric embedding such that for any $\rho\leq R$ the $A$-neighbourhood of the image of the ball $B((e, t_i x), \rho)$ contains $B((e,f_i(t_i x)), \rho/C-2A)$.

Let $r>0$ be a positive number such that the exponential maps $\exp_x\colon T_x M\supseteq B(0,r) \to B(x,r)$ and $\exp_y\colon T_y N\supseteq B(0,r) \to B(y,r)$ are homeomorphisms and that all $\exp_x$, $\exp_y$ and their inverses are $L$-Lipschitz maps with $L$ not depending on $x\in M$ and $y\in N$. We can also assume that these maps preserve spheres, that is, $d(0,z) = d(\exp_x(0), \exp_x(z))$ and $d(0,z') = d(\exp_y(0), \exp_x(z'))$ for any $z\in B(0,r) \subseteq T_x M$ and $z'\in B(0,r) \subseteq T_y N$.

For $i$ sufficiently large, we can assume that $r>R/t_i$ and $r>(CR+A)/\tau_i$. Consider the composition:
\[ k_i = (\id_\Delta \times (\tau_i \circ \exp_{f_i(t_i x)}^{-1})) \circ j_i \circ (\id_\Gamma \times (\exp_x \circ t_i^{-1})),\]
where $t_i^{-1}$ denotes the multiplication  (homothety) operator in $T_x M$ and $\tau_i$ denotes the multiplication operator in $T_{f(t_i x)} N$
(we keep identifying points in $t_iM$ and $M$, and points in $\tau_i N$ and $N$). The map $k_i$ is well defined on the ball $B((e,0),R)$ inside $\Gamma\times T_x M$ (with the $\ell_1$-metric)---which we will identify with $\Gamma \times \R^m$---and is a $(CL^2,AL)$-quasi-isometric embedding into the ball $B((e,0), CR+A)$ inside $\Delta \times \R^n$. Moreover, for any $\rho\leq R$ the $AL$-neighbour\-hood of the image of the ball $B((e, 0), \rho)$ contains the ball $B((e,0), \rho/C-2A)$.

Now, we can pre-compose the map $k_i$ with the isometric embedding of $B((e,0),R) \cap (\Gamma \times \Z^m)$ into $B((e,0),R)\subseteq \Gamma\times \R^m$ and post-compose with a closest-point retraction of $B((e,0), CR+A)$ onto $B((e,0), CR+A) \cap (\Delta \times \Z^n)$. Let us keep the old notation for this new map and new balls. Note that now it is a $(C',A')$-quasi-isometric embedding for $C'=CL^2$ and $A' = 2\sqrt n + AL$ and for any $\rho\leq R$ the $A''$-neighbourhood of the image of $B((e,0),\rho) \subseteq \Gamma \times \Z^m$ contains $B((e,0), \rho/C-2A) \subseteq \Delta \times \Z^n$, where $A''=(CL^2\sqrt{m} + \sqrt{n} + 2AL)$.

Now, let $\U$ be any non-principal ultrafilter on $\N$. We define
\[k\colon \Gamma \times \Z^m \to \Delta \times \Z^n \text{ by } k(\gamma,z) =\lim_{R\to \U} k_{i(R)}(\gamma, z).\]
 First, observe that for a given $(\gamma,z)$ the value $k_i(\gamma,z)$ is defined for sufficiently large~$i$, so the limit makes sense. Furthermore, since $k_i(e,0) = (e,0)$ for all $i$ and all $k_i$ are $(C',A')$-quasi-isometric embeddings, we know that $k_i(\gamma, z)$ always belongs to the finite set $B((e,0),\, C'd((\gamma,z), (e,0)) + A')$, and hence the limit exists. Obviously, it is still a $(C',A')$-quasi-isometric embedding. Furthermore, since $A''$-neighbourhood of $k_i(B((e,0), \rho))$ contains $B((e,0), \rho/C-2A)$, the same holds for~$k$. Since $\rho$ is arbitrary, the map $k$ is a quasi-isometry.
\end{proof}

In fact we did not use in the proof the fact that the whole warped cones are quasi-isometric: it suffices that there is an unbounded sequence $(t_i)_i\subseteq \R$ such that $(t_i M, d_\Gamma)_i$ is quasi-isometric with $(\tau_i N, d_\Delta)_i$ for some (necessarily unbounded) sequence $(\tau_i)_i$.

\begin{rem}\label{box rem} Similarly, one can prove that if a box space $(\Gamma/\Gamma_i)_i$ is quasi-isometric to (a subsequence of) a warped cone $(\tau_i N, d_\Delta)_{i}$ for an action $\Delta\acts N$ as in \cref{qi}, then $\Gamma \simeq \Delta \times \Z^n$. In this case, a box space of a group $\Gamma$ cannot be quasi-isometric to a warped cone over its action, unless $\Gamma \simeq \Gamma \times \Z^n$.

However, this is restricted to actions on manifolds, because any box space $(\Gamma/\Gamma_i)_i$ may be realised as a sequence of levels $(\tau_i Y, d_\Gamma)_{i}$ for $Y = \varprojlim \Gamma/\Gamma_i$ \cite{completions}.
\end{rem}

By similar arguments, one obtains the following.

\begin{cor}\label{ce}
Let $\Gamma, \Delta$ be two finitely generated groups and $M, N$ be two compact Riemannian manifolds of dimension $m$ and $n$ respectively with free isometric actions $\Gamma\acts M$ and $\Delta\acts N$. Then, if the warped cone $\cO_\Gamma M$ embeds coarsely into $\cO_\Delta N$, then there is a coarse embedding of groups $\Gamma\times \Z^m \to \Delta\times\Z^n$.
\end{cor}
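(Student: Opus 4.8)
The plan is to run the identical construction used in the proof of Theorem \ref{qi}, reading ``$(C,A)$-quasi-isometry'' everywhere as ``$(\rho_-,\rho_+)$-coarse embedding'' and dropping the surjectivity-type bookkeeping. Concretely, fix an unbounded sequence $(t_i)\subseteq(0,\infty)$ and a sequence $(\tau_i)$ such that $(f_i\colon (t_iM,d_\Gamma)\to(\tau_iN,d_\Delta))_i$ is a uniform coarse embedding with control functions $\rho_-,\rho_+$; note $\tau_i\to\infty$ because $\diam(t_iM,d_\Gamma)\to\infty$ and $\rho_-\to\infty$ forces $\diam(\tau_iN,d_\Delta)\to\infty$. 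For each $R\in\N$ pick $i=i(R)$ large enough that three things hold simultaneously: $\pi_\Gamma\colon(\gamma,t_ix)\mapsto\gamma(t_ix)$ is an isometry on the relevant product ball (Theorem \ref{SW covering}, with $i$ chosen uniformly in $x$ using compactness of $M$), $\pi_\Delta$ is an isometry on the ball of radius $\rho_+(R)+1$ around each $(\delta,\tau_iy)$, and $r>R/t_i$, $r>\rho_+(R)/\tau_i$ where $r$ is the exponential-chart radius from the proof of Theorem \ref{qi}.

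Then I would form exactly the composite $j_i=\sigma_\Delta\circ f_i\circ\pi_\Gamma$ on $B((e,t_ix_0),R)\subseteq\Gamma\times t_iM$ for a fixed basepoint $x_0\in M$; this is a coarse embedding into $B((e,f_i(t_ix_0)),\rho_+(R)+1)\subseteq\Delta\times\tau_iN$ with the same control functions $\rho_-,\rho_+$ (composition with local isometries does not change the controls). Conjugating by the rescaled exponential charts $\exp_{x_0}\circ\, t_i^{-1}$ and $\tau_i\circ\exp_{f_i(t_ix_0)}^{-1}$, which are bi-Lipschitz with a constant $L$ independent of $i$ and of basepoint, produces a map $k_i$ on the ball $B((e,0),R)$ inside $\Gamma\times\R^m$ (with $\ell_1$-metric) into $\Gamma\times\R^n$ that is a coarse embedding with controls $\rho_-(L^{-2}\,\cdot\,)$ and $L^2\rho_+(\,\cdot\,)$; pre- and post-composing with the standard bi-Lipschitz correspondences between $\R^m$-balls and $\Z^m$-balls (respectively $\R^n$ and $\Z^n$) turns it into a coarse embedding $k_i\colon B((e,0),R)\cap(\Gamma\times\Z^m)\to\Delta\times\Z^n$ with control functions $\rho'_-,\rho'_+$ \emph{independent of $R$} (the additive constants from passing to lattices only shift the controls by a bounded amount), normalised so $k_i(e,0)=(e,0)$.

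Finally I would take a non-principal ultrafilter $\U$ on $\N$ and set $k(\gamma,z)=\lim_{R\to\U}k_{i(R)}(\gamma,z)$. For fixed $(\gamma,z)$ the value $k_{i(R)}(\gamma,z)$ is defined for all large $R$ (since $R\to\infty$ along the sequence) and lies in the \emph{finite} set $B((e,0),\rho'_+(|\gamma|+|z|))\subseteq\Delta\times\Z^n$ because $k_{i}(e,0)=(e,0)$ and $k_i$ satisfies the upper bound; hence the ultralimit exists. For any two points $(\gamma,z),(\gamma',z')$, once $R$ is large enough both lie in $B((e,0),R)$, so $\rho'_-\bigl(d((\gamma,z),(\gamma',z'))\bigr)\leq d(k_{i(R)}(\gamma,z),k_{i(R)}(\gamma',z'))\leq\rho'_+\bigl(d((\gamma,z),(\gamma',z'))\bigr)$, and passing to the limit the same estimate holds for $k$. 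Thus $k\colon\Gamma\times\Z^m\to\Delta\times\Z^n$ is a coarse embedding, which is the claim; unlike in Theorem \ref{qi} there is no surjectivity assertion to verify, so the coarse-density bookkeeping there is simply omitted. The only point needing any care is the uniform-in-basepoint choice of $i(R)$ making $\pi_\Gamma$ an isometry on product balls and the chart radius $r$ work simultaneously, but this is handled exactly as in the proof of Theorem \ref{qi} by compactness of $M$ and $N$; I do not expect a genuine obstacle.
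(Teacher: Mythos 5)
Your proposal is correct and matches the paper's intent exactly: the paper does not spell out a separate proof of Corollary~\ref{ce} but simply writes ``By similar arguments, one obtains the following,'' deferring to the proof of Theorem~\ref{qi}, and you have carried out precisely that translation (quasi-isometry constants replaced by control functions $\rho_\pm$, surjectivity bookkeeping dropped, ultralimit argument unchanged). The only cosmetic slip is that pre- and post-composing with $L$-bi-Lipschitz charts gives controls $L^{-1}\rho_-(L^{-1}\,\cdot\,)$ and $L\rho_+(L\,\cdot\,)$ rather than $\rho_-(L^{-2}\,\cdot\,)$ and $L^2\rho_+(\,\cdot\,)$, but this is immaterial since the modified functions still tend to infinity and are independent of $R$, which is all the ultralimit step requires.
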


In the proof of Theorem \ref{qi} we used only neighbourhoods of one orbit of the action $\Gamma\acts M$. Hence, with a more careful treatment, one can relax the assumption of the freeness of $\Gamma\acts M$ to the existence of a free orbit. The same applies to \cref{box rem} and \cref{ce}.

Tim de Laat and Federico Vigolo \cite{dLV} very recently obtained independently a different strengthening of \cref{qi}, in which they assume only that the actions $\Gamma\acts M$ and $\Lambda\acts N$ are \emph{essentially} free.

\section{Quasi-isometry induced by one map}\label{induced QI section}

The lack of rigidity exhibited in \cref{appendix} and in the formulation of Theorem~\ref{qi} and Corollary~\ref{ce} comes from the fact that a quasi-isometry of warped cones need not respect the equivalence relation induced by the action (it may ignore the division into the $\Gamma$- and $\Delta$-coordinate and the $M$- and $N$-coordinate (or $\Z^m$- and $\Z^n$-coordinate), that one has in $\Gamma\times t_i M$ and $\Delta\times \tau_i N$ (or $\Gamma \times \Z^m$ and $\Delta\times \Z^n$). However, it cannot happen if the quasi-isometry is induced by a single continuous map $M\to N$, in which case we have the following rigidity behaviour. 

\begin{theorem}\label{isomorphism of groups} Let $Y,X$ be compact metric spaces and $Y$ be connected, let $f\colon Y\to X$ be a continuous map, and let $\Gamma\acts Y$ contain a free orbit and $\Delta \acts X$ be free. If $f$ induces a quasi-isometry of warped cones $\cO_\Gamma Y$ and $\cO_\Delta X$, then
\begin{itemize}
\item there is a short exact sequence $1\to F\to \Gamma \to \Delta' \to 1$, where $\Delta' \leq \Delta$, the stabiliser of $\im(f)$, is of finite index in $\Delta$, and $F$ is finite;
\item $f$ factorises as $f'\circ q$ via the quotient map $q\colon Y\to Y/F$ and $f'$ is a homeomorphism conjugating the actions $\Delta'\acts \im (f)$ and $\Gamma/F\acts Y/F$;
\item $\im (f)$ is one of $[\Delta:\Delta']$-many connected components of $X$ and $\Delta$ acts on them transitively.
\end{itemize}

In particular, if $f$ is a homeomorphism, then $\Gamma$ and $\Delta$ are isomorphic and the actions are conjugate by $f$.
\end{theorem}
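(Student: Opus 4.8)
The plan is to show that $f$ is in fact \emph{exactly} equivariant with respect to a homomorphism $\phi\colon\Gamma\to\Delta$, and then to read off the structure of $\phi$ (finite kernel, finite-index image) from the quasi-isometry together with coarse surjectivity. Write the induced quasi-isometry as a uniform $(C,A)$-quasi-isometry $f_i\colon(t_iY,d_\Gamma)\to(\tau_iX,d_\Delta)$, $f_i(t_iy)=\tau_if(y)$, along sequences $t_i,\tau_i\to\infty$ (after passing to a subsequence of levels and reparametrising). The only real input is \cref{3.1} (with \cref{2.1}): $d_\Gamma(ty,ty')$ stays bounded as $t\to\infty$ exactly when $\Gamma y=\Gamma y'$, and then it stabilises to $\min\{|\gamma|\st y'=\gamma y\}$; likewise for $\Delta\acts X$. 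Bounding $d_\Delta(\tau_if(y),\tau_if(y'))$ above and below by a multiple of $d_\Gamma(t_iy,t_iy')$ shows $\Gamma y=\Gamma y'\iff\Delta f(y)=\Delta f(y')$, so $f$ carries $\Gamma$-orbits into $\Delta$-orbits and induces an injection $Y/\Gamma\hookrightarrow X/\Delta$. I record the consequence used repeatedly: if $d_\Delta(\tau_iz,\tau_iw)\le B$ for all $i$, then $w=\eta z$ for some $\eta\in\Delta$ with $|\eta|\le B$.

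The key step is exact equivariance. Feeding the generator inequality $d_\Gamma(t_iy,s(t_iy))\le1$ through $f_i$ gives $d_\Delta(\tau_if(y),\tau_if(sy))\le C+A$ for all $i$, so by the recorded fact $f(sy)=\delta_{s,y}f(y)$ for a \emph{unique} $\delta_{s,y}\in\Delta$ (uniqueness by freeness of $\Delta\acts X$), lying in the finite ball $B_\Delta(e,C+A)$. I claim $y\mapsto\delta_{s,y}$ is locally constant: if $y_n\to y_0$, the $\delta_{s,y_n}$ take finitely many values, so some value $\delta_*$ occurs along a subsequence, along which $\delta_*f(y_n)=f(sy_n)\to f(sy_0)=\delta_{s,y_0}f(y_0)$ while also $\delta_*f(y_n)\to\delta_*f(y_0)$; freeness forces $\delta_*=\delta_{s,y_0}$, and since this holds for every value occurring infinitely often, $\delta_{s,y_n}=\delta_{s,y_0}$ eventually. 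As $Y$ is connected, $\delta_{s,y}$ is a constant $\phi(s)\in\Delta$; iterating along words (well-definedness again by freeness of $\Delta\acts X$) yields a homomorphism $\phi\colon\Gamma\to\Delta$ with $f(\gamma y)=\phi(\gamma)f(y)$ for all $\gamma,y$.

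It remains to identify $F\defeq\ker\phi$ and $\Delta'\defeq\phi(\Gamma)$. Restricting $f$ to a free $\Gamma$-orbit $\Gamma y_0$ (this uses the hypothesis that $\Gamma\acts Y$ has a free orbit) and identifying $\Gamma y_0\cong\Gamma$, $\Delta f(y_0)\cong\Delta$ via the stabilised word metrics of \cref{3.1} — both are the word metric, by freeness of the two orbits — the uniform estimate becomes $C^{-1}|g|-A\le|\phi(g)|\le C|g|+A$; so $\phi$ is a quasi-isometric embedding and $F=\{g\st|g|\le CA\}$ is finite. For finite index I invoke coarse surjectivity: a point $\tau_if(y_i)$ lies $A$-close to $\tau_i\delta f(y_0)$, and — via the local product picture of \cref{SW covering} (valid for $i$ large, $\Delta\acts X$ being free) together with closedness of $f(Y)$ ($Y$ is compact) — one extracts $\delta\in B_\Delta(e,A)\cdot\Delta'$, using $f(Y)\cap\Delta f(y_0)=f(\Gamma y_0)=\Delta'f(y_0)$; hence $\Delta'$ is coarsely dense and of finite index, giving $1\to F\to\Gamma\to\Delta'\to1$, and the same argument with arbitrary points in place of $y_0$ gives $X=\Delta\cdot\im f$. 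The remaining conclusions are formal: since $f$ kills $F$ it factors as $Y\xrightarrow{q}Y/F\xrightarrow{\bar f}X$, with $\bar f$ injective (if $f(y)=f(y')$ then $y'=\gamma y$ by orbit injectivity and $\phi(\gamma)=e$ by freeness), continuous, from a compact to a Hausdorff space, hence a homeomorphism onto $\im f$; so $h\defeq\bar f^{-1}$ satisfies $q=h\circ f$ and conjugates $\Delta'\acts\im f$ with $\Gamma/F\acts Y/F$ because $f$ is $\phi$-equivariant. And $X=\Delta\cdot\im f$ is a finite union of $\Delta$-translates of the connected, closed (hence clopen) set $\im f$, each therefore a connected component, the translates distinct since $\Stab_\Delta(\im f)=\Delta'$ by the orbit-intersection computation; so $X$ has $[\Delta:\Delta']$ components, permuted transitively. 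In particular, if $f$ is a homeomorphism then $F=1$ and $\im f=X$ is connected, so $\Delta'=\Delta$ and $\phi$ is an isomorphism conjugating the actions.

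I expect the genuine obstacle to be the exact-equivariance step: upgrading ``$f(sy)$ is boundedly close to $f(y)$'' to the exact identity $f(sy)=\phi(s)f(y)$ with a single group element. The mechanism is that \cref{3.1} confines each candidate $\delta_{s,y}$ to one finite set independent of $y$, after which continuity of $f$ and connectedness of $Y$ pin it down; this is precisely where freeness of $\Delta\acts X$ (not merely effectiveness) is essential, since otherwise $\delta_{s,y}$ is not determined by $f$ and no rigidity can hold. A minor preliminary point is just to fix the meaning of ``$f$ induces a quasi-isometry'' (a subsequence of levels plus a reparametrisation $t\mapsto\tau(t)$), which is harmless.
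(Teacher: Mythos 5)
Your proof is correct and follows essentially the same route as the paper's: use \cref{3.1} to deduce orbit preservation and the bounded-range property of the cocycle $\delta(\gamma,y)$, then upgrade to exact equivariance via continuity (your sequence argument is equivalent to the paper's observation that preimages of points are closed, hence clopen since the target is finite) plus connectedness, and finally read off finite kernel from the free-orbit estimate and finite index from coarse density/compactness. The only stylistic differences are that you build $\phi$ generator by generator and iterate, whereas the paper works with the cocycle for all $\gamma$ at once, and your invocation of the local product picture from \cref{SW covering} in the finite-index step is unnecessary — the paper's direct compactness argument (take $x_n\in\im f$ with $d_\Delta(nx',nx_n)\le A$, pass to a convergent subsequence, apply \cref{3.1}) suffices and is what you are effectively reproducing.
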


We sometimes omit it for brevity, but recall that whenever we write about a warped cone $\cO_\Gamma Y$, the metric space $Y$ is implicitly assumed to be compact, the group $\Gamma$ has to be finitely generated, and the action $\Gamma\acts Y$ has to be continuous (that is, by homeomorphisms).

A reader familiar with coarse equivalences may note that the following proof remains valid if one assumes that $f$ induces only a coarse equivalence. We present the proof in the setting of quasi-isometries, because as soon as $Y$ and $X$ are geodesic spaces, such a coarse equivalence actually has to be a quasi-isometry.

\begin{proof}[Proof of \cref{isomorphism of groups}]
Recall from Lemma \ref{3.1} that $\lim_{t\to \infty} d_\Gamma(ty, ty')$ is infinite if $y$ and $y'$ lie in different orbits and equals 
\begin{equation}\label{limit distance}
\min \{|\eta| \st \eta\in \Gamma, \eta y = y'\}
\end{equation}
if $\Gamma y'= \Gamma y$. By the assumption, for every $t>0$ there is $\tau(t)$ such that
\begin{equation}\label{estimates QI}
C^{-1}d_\Gamma(ty, ty') - A \leq d_\Delta(\tau(t)f(y), \tau(t)f(y')) \leq C d_\Gamma(ty, ty') + A,
\end{equation}
where $C\geq 1$ and $A\geq 0$ are uniform.
We conclude that 
\[\lim_{\tau\to\infty} d_\Delta(\tau f(y), \tau f(y')) = \lim_{t\to\infty} d_\Delta(\tau(t)f(y), \tau(t)f(y')) < \infty \]
if and only if $\lim_t d_\Gamma(ty, ty')< \infty$
and hence $f$ preserves orbits, that is, 
\begin{equation}\label{orbits preserved}
f(\Gamma y) = \Delta f(y) \cap \im (f),
\end{equation}
and also
\[\Gamma y\neq \Gamma y' \implies \Delta f(y)\neq \Delta f(y').\]

Now, since the $\Delta$-action is free, for every $\gamma\in \Gamma$ and $y\in Y$ there is a unique $\delta(\gamma, y)$ such that $f(\gamma y) = \delta f(y)$.

Due to \eqref{limit distance} and \eqref{estimates QI}, we know that 
\begin{equation}\label{range of delta}
 |\delta(\gamma, y)| \leq C|\gamma| + A,
\end{equation}
in particular $\delta(\gamma, \cdot)$ is a function $Y\to B(e, C|\gamma| + A)$ into a finite set and hence its continuity follows from the property that inverse images $\{y\in Y \st \delta(\gamma, y) = \mathrm{pt}\}$  of points are closed. Consequently, the map $\delta\colon\Gamma\times Y \to \Delta$ is continuous.

Fix $y\in Y$. We will prove that $\delta(\Gamma, y)$ is coarsely dense in $\Delta$. Denote $x = f(y)$ and take any $\delta\in \Delta$. 
By the assumption, for every $n\in \N$ there is $x_n \in \im (f)$ such that $d_\Delta(n\delta x, nx_n)\leq A$. In particular, for any $m\leq n$ we have $d_\Delta(m\delta x, mx_n)\leq A$.  By compactness, a subsequence of $x_n$ has a limit $x_0$ in the closure of $\im (f)$ (equal to $\im (f)$ itself under our assumptions), which satisfies $d_\Delta(m\delta x, mx_0)\leq A$ for all $m\in \N$. In particular $\delta x = \delta_0 x_0$ for some $|\delta_0|\leq A$.
This means that $x$ and $x_0$ lie in the same $\Delta$-orbit, and hence, 
by \eqref{orbits preserved} and the fact that $x_0\in \im (f)$, there is some $\gamma\in \Gamma$ such that $f(\gamma y) = x_0$
and consequently $\delta x = \delta_0 x_0 = \delta_0 \delta(\gamma, y) x$, that is:
\begin{equation}\label{coarsely dense}
\delta = \delta_0 \delta(\gamma, y).
\end{equation}

Assume now that $\Gamma \acts Y$ has a free orbit and let $y_0\in Y$ be an element of such an orbit. Then, formula \eqref{limit distance} simplifies to $\lim_t d_\Gamma (ty_0, \gamma ty_0) = |\gamma|$, which, together with \eqref{estimates QI} yields 
\begin{equation}\label{delta does not squeeze}
|\delta(\gamma, y_0)| \geq C^{-1} |\gamma| - A.
\end{equation}

Now recall that we assumed $Y$ to be connected. A continuous function $\delta(\gamma, \cdot)$ from a connected space to a discrete space must be constant. By definition $\delta$ is a cocycle, that is, it satisfies:
\[\delta(\gamma_2, \gamma_1 y) \delta(\gamma_1, y) = \delta(\gamma_2\gamma_1, y),\]
but as there is no dependence on $y$, it simply means that $\delta(\gamma)\defeq \delta(\gamma, y)$ is a homomorphism. In particular, inequality \eqref{delta does not squeeze} means that the kernel $F$ of $\delta$ is finite, as contained in the ball $B(e,CA)$. Similarly, the existence of elements $\delta_0\in \Delta$ satisfying formula \eqref{coarsely dense} and with uniformly bounded length (e.g.\ $|\delta_0|\leq A$ as above) is equivalent to saying that the subgroup $\delta(\Gamma)$ is of finite index in $\Delta$.

Note that $\delta(\gamma,y)=e$ if and only if $f(\gamma y)=f(y)$, but, since $\delta$ does not depend on the second coordinate, we have $\gamma\in F=\ker \delta$ if and only if $f(\gamma y)=f(y)$ for some (equivalently: all) $y$. Hence, $f$ is well defined as a map $Y/F \to \im f$. For the same reason, $f$ restricted to any $\Gamma$-orbit in $Y/F$ is injective and, in fact, it is injective on $Y/F$. Indeed, consider some $y,y'$ with $\Gamma y \neq \Gamma y'$---then by \cref{3.1} the limit $\lim_t d(ty, ty')$ is infinite and by inequality \eqref{estimates QI} so is $\lim_t d(\tau(t) f(y), \tau(t) f(y'))$, in particular $f(y)\neq f(y')$.

Since $Y$ is compact, $f\colon Y/F \to \im (f)$ is a homeomorphism. For any $y\in Y$ and $\delta' \in \Delta$ such that $\delta' f(y) \in \im (f)$, we know by equality \eqref{orbits preserved} that there is $\gamma\in \Gamma$ such that $f(\gamma y) = \delta' f(y)$. But then $\delta(\gamma)= \delta'$, which shows that $\Delta'\subseteq \delta(\Gamma)$. 
The converse inclusion is
clear and 
we also see
that if $\delta \im (f) \neq \im (f)$, then $\delta \im (f)\cap \im (f) = \emptyset$. Since $\Delta'$ has finite index, say $k$ in $\Delta$, there exist $\delta_1, \ldots \delta_{k-1} \in \Delta\setminus \Delta'$ such that $\{e,\delta_1, \ldots, \delta_{k-1}\} \im (f) = \Delta \im (f)$. Hence
\[\Delta \im (f) \setminus \im (f) = \bigcup_{i=1}^{k-1} \delta_i \im (f)\] is a closed set, so $\im (f)$ is clopen in $\Delta \im (f)$, thus a connected component.

By compactness of $\Delta \im (f)$, if there exists $x\in X\setminus \Delta \im (f)$, then a similar reasoning to the one in the proof of Lemma \ref{3.1} shows that $\lim_{\tau\to \infty} d_\Gamma(\tau x, \tau \Delta \im (f)) = \infty$, which would contradict the fact that the $A$-neighbourhood of $\im (f)$ is the whole of $\tau(t) Y$. Hence $X=\Delta \im (f)$.

The proof is finished in the case when $f$ is continuous. If it is additionally a homeomorphism, then $F$ must be trivial, as $f$ factors through $Y/F$. By surjectivity of $f$, equality \eqref{orbits preserved}, and the freeness of the action of $\Delta$, the map $\delta(\cdot, y)$ must be surjective. Hence $\delta$ is an isomorphism.
\end{proof}

\begin{rem} In \cref{isomorphism of groups}, if $f$ is assumed to be injective, then, instead of a~free $\Gamma$-orbit, it suffices to assume faithfulness of $\Gamma\acts Y$.
\end{rem}
\begin{proof}
Note that the only place in the proof of \cref{isomorphism of groups} where we used a free orbit of $\Gamma$ is inequality \eqref{delta does not squeeze}, which was later used to conclude that $\delta$ has a finite kernel. However, if $f$ is injective, we can conclude that already from the faithfulness of $\Gamma \acts Y$. Indeed, for every $\gamma\neq e$, there is some $y_\gamma\in Y$ with $\gamma y_\gamma \neq y_\gamma$ and, consequently, $f(\gamma y_\gamma) \neq f(y_\gamma)$. In particular $\delta(\gamma) = \delta(\gamma, y_\gamma) \neq e$, so $\delta$ is a monomorphism.
\end{proof}

By a similar argument, one can obtain:
\begin{cor} Let $Y,X$ be compact metric spaces and $Y$ be connected, let $f\colon Y\to X$ be continuous,  and let $\Gamma\acts Y$ contain a free orbit and $\Delta \acts X$ be free. If $f$ induces a coarse embedding of warped cones $\cO_\Gamma Y \to \cO_\Delta X$, then $\Delta$ contains a subgroup isomorphic to a quotient of $\Gamma$ by a finite index subgroup.
\end{cor}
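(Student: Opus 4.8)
The plan is to run the proof of Theorem~\ref{isomorphism of groups} essentially unchanged, replacing the quasi-isometry estimates~\eqref{estimates QI} by the coarse-embedding estimates $\rho_-\circ d_\Gamma(ty,ty')\le d_\Delta(\tau(t)f(y),\tau(t)f(y'))\le \rho_+\circ d_\Gamma(ty,ty')$, and checking which conclusions survive. Since $\rho_-$ and $\rho_+$ are non-decreasing and tend to infinity, all the qualitative facts used in that proof --- ``a limiting distance is finite'', ``it is bounded by a prescribed value'' --- transfer in both directions, so most of the argument carries over verbatim.

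First I would record, exactly as before, that by Lemma~\ref{3.1} the limit $\lim_t d_\Gamma(ty,ty')$ is finite if and only if $y,y'$ lie in a common $\Gamma$-orbit, in which case it equals $\min\{|\eta|:\eta y=y'\}$, and similarly on $X$. The two-sided estimate then forces $\lim_\tau d_\Delta(\tau f(y),\tau f(y'))$ to be finite precisely when $\lim_t d_\Gamma(ty,ty')$ is, so $f$ preserves orbits, $f(\Gamma y)=\Delta f(y)\cap\im f$, and $f$ descends to an injection $Y/\Gamma\to X/\Delta$.

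Next, freeness of $\Delta\acts X$ produces for each $\gamma\in\Gamma$ and $y\in Y$ a unique $\delta(\gamma,y)\in\Delta$ with $f(\gamma y)=\delta(\gamma,y)f(y)$; letting $\tau\to\infty$ and using freeness together with the upper estimate gives $|\delta(\gamma,y)|\le\rho_+(|\gamma|)$, so $\delta(\gamma,\cdot)$ takes values in a finite ball. Its point-preimages $\{y:f(\gamma y)=\delta f(y)\}$ are closed by continuity of $f$ and of the action, so $\delta(\gamma,\cdot)$ is continuous, hence constant since $Y$ is connected; set $\delta(\gamma)\defeq\delta(\gamma,y)$. The cocycle identity, now independent of $y$, makes $\delta\colon\Gamma\to\Delta$ a homomorphism. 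Applying the lower estimate at a point $y_0$ of a free $\Gamma$-orbit, where $\lim_t d_\Gamma(ty_0,\gamma ty_0)=|\gamma|$, gives $|\delta(\gamma)|\ge\rho_-(|\gamma|)$, so $\ker\delta$ lies in a bounded ball and is therefore finite; thus $\delta$ factors as an injective homomorphism $\Gamma/\ker\delta\to\Delta$, which is the claimed embedding (with $\ker\delta$ a finite normal subgroup).

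I do not anticipate a real obstacle, as this is just a sub-argument of the proof of Theorem~\ref{isomorphism of groups}; the only point that needs attention is that finiteness of $\ker\delta$ still works after the affine lower bound $C^{-1}|\gamma|-A$ is weakened to $\rho_-(|\gamma|)$, which it does because $\rho_-\to\infty$. What is genuinely lost is the argument that $\delta(\Gamma,y)$ is coarsely dense in $\Delta$: that relied on coarse surjectivity (the $A$-neighbourhood of $\im f$ being all of $\tau(t)Y$), which a coarse embedding does not provide. Hence one can no longer conclude that $\delta(\Gamma)$ has finite index in $\Delta$, nor that $\im f$ is a union of connected components of $X$, which is why the statement asserts only a copy of $\Gamma/\ker\delta$ inside $\Delta$.
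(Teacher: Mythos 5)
Your proposal is correct and follows exactly the route the paper intends: the paper offers no written proof, only the remark ``By a similar argument,'' and your adaptation of the proof of Theorem~\ref{isomorphism of groups} (using Lemma~\ref{3.1}, the orbit-preservation argument, continuity and connectedness to produce a homomorphism $\delta$, and the lower coarse estimate at a free orbit to bound $\ker\delta$) is precisely that argument, with a correct accounting of which steps survive the loss of coarse surjectivity. One point worth flagging: your argument shows that $\ker\delta$ is a \emph{finite} normal subgroup, whereas the corollary as printed says ``a quotient of $\Gamma$ by a finite \emph{index} subgroup''; this appears to be a typo in the paper (compare Theorem~\ref{isomorphism of groups}, where the kernel $F$ is described as finite, not of finite index), and what you prove is the intended---and for infinite $\Gamma$, genuinely stronger---conclusion.
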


Without the connectedness assumption, one obtains a cocycle instead of a homomorphism.

\begin{cor}\label{qi cocycle} If both actions are free and a homeomorphism $f\colon Y\to X$ induces a quasi-isometry of the respective warped cones, one gets a continuous cocycle $\delta\colon \Gamma \times Y \to \Delta$ (which is bijective as a function $\Gamma\to \Delta$ for a fixed $y\in Y$) satisfying 
\begin{equation}\label{quasi-isometric cocycle}
C^{-1} |\gamma| - A \leq |\delta(\gamma, y)|\leq C|\gamma| + A
\end{equation}
for some $C\geq 1$ and $A\geq 0$. In particular $\Gamma$ and $\Delta$ are bijectively quasi-isometric.
If $f$ is only injective or surjective, we get the appropriate properties of the cocycle.
\end{cor}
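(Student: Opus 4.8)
\emph{Proof proposal.} The plan is to rerun the proof of Theorem~\ref{isomorphism of groups} almost verbatim, the only change being that without connectedness of $Y$ we can no longer force $\gamma\mapsto\delta(\gamma,\cdot)$ to be constant, so "homomorphism" degrades to "cocycle" and "isomorphism" to "bijective quasi-isometry". First I would record, exactly as at the start of that proof, that Lemma~\ref{3.1} combined with the quasi-isometry estimates \eqref{estimates QI} shows $f$ preserves orbits in the sense of \eqref{orbits preserved}; when $f$ is a homeomorphism this reads $f(\Gamma y)=\Delta f(y)$ for every $y$. Since $\Delta$ acts freely, for each pair $(\gamma,y)$ there is then a \emph{unique} $\delta(\gamma,y)\in\Delta$ with $f(\gamma y)=\delta(\gamma,y)\,f(y)$: well-definedness uses that $f(\gamma y)\in\Delta f(y)$ by \eqref{orbits preserved}, and uniqueness uses freeness of $\Delta$. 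Substituting the defining relation into itself gives the cocycle identity $\delta(\gamma_2,\gamma_1 y)\,\delta(\gamma_1,y)=\delta(\gamma_2\gamma_1,y)$.

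Next I would verify continuity of $\delta$ as in Theorem~\ref{isomorphism of groups}: by \eqref{range of delta} the map $\delta(\gamma,\cdot)$ takes values in the finite ball $B(e,C|\gamma|+A)$, and the preimage of each value is closed because $f$ and the action are continuous, so $\delta(\gamma,\cdot)\colon Y\to\Delta$ is continuous. This is precisely the step at which connectedness was previously used to conclude constancy, and it is exactly here that we stop, retaining $\delta$ as a genuine cocycle.

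For the bounds \eqref{quasi-isometric cocycle}, the upper estimate $|\delta(\gamma,y)|\le C|\gamma|+A$ is \eqref{range of delta}. For the lower one I would apply Lemma~\ref{3.1} on both sides with $y'=\gamma y$: freeness of $\Gamma$ gives $\lim_t d_\Gamma(ty,t\gamma y)=|\gamma|$, while freeness of $\Delta$ gives $\lim_\tau d_\Delta(\tau f(y),\tau f(\gamma y))=|\delta(\gamma,y)|$ because $\delta(\gamma,y)$ is the unique $\eta\in\Delta$ with $\eta f(y)=f(\gamma y)$; feeding these into \eqref{estimates QI} yields $C^{-1}|\gamma|-A\le|\delta(\gamma,y)|$, i.e.\ \eqref{delta does not squeeze} now valid at every $y$ rather than only at a point of a free orbit. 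Fixing any $y$, the map $\gamma\mapsto\delta(\gamma,y)$ is injective (freeness of $\Gamma$ plus injectivity of $f$) and surjective (from $X=\im f$ one gets $\Delta f(y)=f(\Gamma y)$), hence a bijective $(C,A)$-quasi-isometry $\Gamma\to\Delta$, which gives the "in particular" clause.

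Finally, the variants. If $f$ is only injective, $\im f$ may be proper, \eqref{orbits preserved} keeps its intersection with $\im f$, and the coarse-density argument of Theorem~\ref{isomorphism of groups} based on \eqref{coarsely dense} identifies the image of $\gamma\mapsto\delta(\gamma,y)$ with the finite-index stabiliser $\Delta'$ of $\im f$, giving a bijective quasi-isometry $\Gamma\to\Delta'$. If $f$ is only surjective, $\delta$ is still well-defined and $\gamma\mapsto\delta(\gamma,y)$ is still onto, while injectivity may fail and the lower bound only yields "coarse" injectivity, so one gets a surjective quasi-isometry. I expect no genuine obstacle beyond the bookkeeping of tracking which conclusions of Theorem~\ref{isomorphism of groups} survive unchanged and which must be weakened (homomorphism $\rightsquigarrow$ cocycle, isomorphism $\rightsquigarrow$ bijective quasi-isometry), since every analytic ingredient is already available.
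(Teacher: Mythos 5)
Your proposal follows the same route as the paper's own proof: reuse the machinery of Theorem~\ref{isomorphism of groups}, retain $\delta$ as a cocycle once connectedness is dropped, and note that \eqref{delta does not squeeze} now holds at every $y$ since all $\Gamma$-orbits are free. All of that is correct.

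However, there is a genuine gap in your final step. You establish the norm bound $C^{-1}|\gamma| - A \le |\delta(\gamma,y)| \le C|\gamma| + A$ and bijectivity of $\delta_y = \delta(\cdot,y)$, and then write \emph{"hence a bijective $(C,A)$-quasi-isometry."} This does not follow. Bound~\eqref{quasi-isometric cocycle} only controls $|\delta_y(\gamma)| = d(e,\delta_y(\gamma))$, i.e.\ distances from the identity; a quasi-isometric embedding requires a two-sided control of $d(\delta_y(\gamma), \delta_y(\eta))$ for arbitrary $\gamma,\eta$, and a bijection satisfying the norm bound need not satisfy this. What rescues the argument is precisely the cocycle identity you derived earlier: it gives
\[
\delta(\gamma,y)\,\delta(\eta,y)^{-1} = \delta(\gamma\eta^{-1}, \eta y),
\]
so that
\[
d(\delta_y(\gamma), \delta_y(\eta)) = |\delta(\gamma\eta^{-1}, \eta y)| \in \bigl[\,C^{-1}|\gamma\eta^{-1}| - A,\ C|\gamma\eta^{-1}| + A\,\bigr],
\]
where the final containment uses that the norm bound holds at \emph{every} basepoint (here $\eta y$), which is exactly why you needed freeness of all orbits rather than just one. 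The paper's proof makes this computation explicit; your proposal has all the ingredients but never actually assembles them, so as written the "hence" is a non-sequitur. Once this step is supplied, the rest of the proposal (continuity, injectivity/surjectivity, and the discussion of the injective-only and surjective-only variants) matches the paper.
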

\begin{proof}
The inequality \eqref{quasi-isometric cocycle} is just a combination of \eqref{range of delta} and \eqref{delta does not squeeze} (now \eqref{delta does not squeeze} holds for any $y$ as all orbits are free). Consequently, the map $\delta_y = \delta(\cdot, y)\colon \Gamma\to \Delta$ is a quasi-isometric embedding. Indeed,
\begin{align*}
d(\delta_y(\gamma), \delta_y(\eta))
&= \big|\delta_y(\gamma) \, (\delta_y(\eta))^{-1}\big| \\
&= \big|\delta(\gamma, y) \, (\delta(\eta, y))^{-1}\big| \\
&= |\delta(\gamma, y) \, \delta(\eta^{-1}, \eta y)| \\
&= |\delta(\gamma \eta^{-1}, \eta y) | \\
&\in \big[C^{-1} |\gamma\eta^{-1}| - A, \, C |\gamma\eta^{-1}| + A\big] \\
& = \big[C^{-1} d(\gamma,\eta) - A, \, C d(\gamma,\eta) + A\big].
\end{align*}

Note that $\delta_y$ is injective by freeness of the $\Gamma$-action and injectivity of $f$ and it is surjective by freeness of the $\Delta$-action, surjectivity of $f$, and \eqref{orbits preserved}. If we do not assume surjectivity of $f$, then still some $A$-neighbourhood of $\delta_y(\Gamma)$ is the whole of $\Delta$ by the same argument as in the proof of Theorem \ref{isomorphism of groups}, so $\delta_y$ remains a quasi-isometry.
\end{proof}

Two actions $\Gamma \acts Y$ and $\Delta \acts X$ are orbit equivalent if there is an `isomorphism' $Y\to X$ satisfying \eqref{orbits preserved}. Typically, one considers probability measure preserving actions, Borel isomorphisms, and \eqref{orbits preserved} is required up to measure zero \cite{Vaes}. Alternatively, one can require the `isomorphism' to be a homeomorphism and the cocycles $\delta\colon \Gamma\times Y \to \Delta$ and $\gamma\colon \Delta\times X\to \Gamma$ to be continuous, like in \cref{qi cocycle}. The latter requirements constitute the definition of a \emph{continuous orbit equivalence}, which in some cases implies conjugacy of actions \cite{XinLi}.

The continuity of $f$ leads to the continuity of $\delta$ and, if $Y$ is connected, to a homomorphism. However, in the general case we still get the following. 

\begin{cor}\label{qi cocycle non-continuous} If both actions are free and a (continuous / measurable) function $f\colon Y\to X$ induces a coarse embedding 
of the respective warped cones, one gets a (continuous / measurable) cocycle $\delta\colon \Gamma \times Y \to \Delta$, which satisfies:
\begin{equation*}
\rho_-(|\gamma|) \leq |\delta(\gamma, y)|\leq \rho_+|(\gamma|),
\end{equation*}
for some non-decreasing and unbounded functions $\rho_\pm\colon [0,\infty)\to [0,\infty)$. In particular, $\Gamma$ embeds coarsely into $\Delta$. The embedding is injective / surjective whenever $f$ is injective / surjective.
\end{cor}

If the map $f$ above is surjective, we get a surjective coarse embedding of groups, hence a quasi-isometry. However, by the argument from the proof of \cref{isomorphism of groups}, even without surjectivity one can deduce quasi-isometry of $\Gamma$ and $\Delta$ whenever
the induced map of warped cones
is a quasi-isometry and $\im (f)$ is a closed set (e.g.\ if $f$ is continuous).

\begin{rem} Assume that $Y,X$ admit invariant ergodic measures, the actions are free up to measure $0$, and $f$, inducing a quasi-isometry $\cO_\Gamma Y\simeq \cO_\Delta X$, is a measure space isomorphism. In this case the actions are orbit equivalent by \cref{qi cocycle non-continuous}, so we can use various orbit equivalence super-rigidity or cocycle super-rigidity theorems to conclude that the respective actions are conjugate by $f$ (in particular, that $\Gamma$ and $\Delta$ are isomorphic).
\end{rem}

As an example, let us recall the following super-rigidity result, which deals with the type of actions studied in \cites{NS, Vigolo}.

\begin{thm}[Ioana \cite{Io13}]
Let $\Gamma<G$ and $\Delta < D$ be countable dense subgroups of compact connected Lie groups $G,D$ with trivial centers and assume that the left translation action $\Gamma \acts G$ has spectral gap (which is often the case, see references in \cite{NS}). Then the actions $\Gamma\acts G$ and $\Delta \acts D$ are orbit equivalent if and only if they are conjugate. 
\end{thm}

Regarding super-rigidity, we refer the reader to the survey \cite{Vaes} for additional information and more references, among which one should perhaps mention the paper \cite{Popa} of Popa, which establishes a super-rigidity result for certain actions $\Gamma\acts Y$ with essentially no restriction on the other action $\Delta \acts X$.

We finish with a few examples illustrating the results of this section.

\begin{ex}\label{examples to non-rigidity}\mbox{}
\begin{enumerate}
\item\label{divide by finite} It is not difficult to observe that for an isometric action of a finite group $F\acts Y$, the quotient map $f\colon Y \to Y/F$ induces a quasi-isometry between $\cO_F Y$ and $\cO_{\{e\}}(Y/F)$ (in fact an almost isometry with the additive constant $\diam F$).

This shows that it is reasonable to consider continuous maps and not only homeomorphisms in the results in this section, and also that the group~$F$ is necessary in the statement of \cref{isomorphism of groups}.
\item\label{divide by finite subgroup} The above example can be easily generalised. Assume that there is a finite normal subgroup $F \lhd \Gamma$ whose action $F\acts Y$ is isometric. Then the quotient map $f\colon Y \to Y/F$ induces a quasi-isometry between $\cO_\Gamma Y$ and $\cO_{\Gamma/F}(Y/F)$.
\item\label{finite groups on themselves} In the above examples $\Delta$ is a quotient of $\Gamma$ (that is, $\Delta=\Delta'$) and $f$ is surjective. The simplest example showing that it does not need to be the case is when $\Gamma \lneq \Delta$ are finite groups acting on themselves by left multiplication (and $f$ is the inclusion).

In fact, all warped cones over such actions are quasi-isometric and any $f\colon \Gamma\to \Delta$ induces a quasi-isometry. In particular, the respective cocycle $\delta$ need not be a homomorphism.

\item Note that by the last bullet point in \cref{isomorphism of groups}, if $Y$ is connected, $\Delta \acts X$ is free, and we want $\Delta'$ to be a proper subgroup of $\Delta$ as above, then the space $X$ must be disconnected.

\label{dihedral groups} 
Another possibility is to drop the assumption of freeness of $\Delta \acts X$. 
Consider the dihedral group $D_{2\cdot k}$, that is, the group of isometries of a regular $k$-gon $K$ of edge lengths equal to $2$. The embedding $f$ of the interval $[-1,1]$ as any edge of $K$ yields a quasi-isometry of $\cO_{D_{2\cdot 1}} [-1,1]$ and $\cO_{D_{2\cdot k}} K$, where $D_{2\cdot 1} \defeq \{1,-1\}$ acts antipodally on $[-1,1]$. In fact, these are all quasi-isometric to $\cO_{\{e\}} [0,1]$, as $[0,1] \simeq K/D_{2\cdot k}$.
\end{enumerate}
\end{ex}
\begin{proof}
For item \eqref{divide by finite}, note that by Lemma \ref{2.1}, we have
\[d_F(ty, ty') = \min_{g\in F} |g| + td(gy, y'),\]
and the quotient metric, by definition, equals 
\[d([ty], [ty'])= \min_{g,g'\in F} td(gy, gy') = \min_{g\in F} td(gy, y').\]

The same estimates hold for item \eqref{divide by finite subgroup}, that is, the quotient map is a quasi-isometry with constants $C=1$ and $A=\diam F$. Indeed, the quotient map $(tY, d_\Gamma) \to (tY/F, d_{\Gamma/F})$ is obviously $1$-Lipschitz and it suffices to show that it decreases distances by at most $\diam F$.

Let $y, y'\in Y$ and let the distance $d_{\Gamma/F}([ty], [ty'])$ be realised by sequences $(y_i)_{i=0}^n\subseteq Y$ and $(s_i)_{i=1}^{n-1} \subseteq S$ with $[y_0]=[y]$ and $[y_n]=[y']$, that is:
\[d_{\Gamma/F}([ty], [ty']) = td([y_0],[y_1]) + \sum_{i=1}^{n-1} 1 +  td([s_iy_i], [y_{i+1}])\]
(see \cite{Roe-cones}*{Proposition 1.6}). (One should think about the sequence $[y_0]$, $[y_1]$, $[s_1y_1]$, $[y_2],\ldots, [s_{n-1}y_{n-1}]$, $[y_n]$---then $1$ comes from the definition of the warped metric as an upper bound for $d_{\Gamma/F}([ty_i], s_i[ty_i])$ and similarly $td([s_iy_i], [y_{i+1}])$ is an upper bound for $d_{\Gamma/F}([s_ity_i], [ty_{i+1}])$.)

We can assume that $y_0=y$, then that $d(y_0, y_1) = d([y_0], [y_1])$, and further, by an inductive argument, that $d(s_i y_i, y_{i+1}) = d([s_iy_i], [y_{i+1}])$. It may happen that $y_n \neq y'$, but we still have $d_\Gamma(y_n, y')\leq \diam F$ (as $[y_n]=[y']$). Consequently, by applying the definition of the warped metric and the triangle inequality to the sequence $y_0$, $y_1$, $s_1y_1$, $y_2, \ldots$, $s_{n-1}y_{n-1}$, $y_n$, $y'$, we conclude that $d_\Gamma(ty,ty')\leq d_{\Gamma/F}([ty], [ty']) + \diam F$.

Item~\eqref{finite groups on themselves} follows from item~\eqref{divide by finite}.

Item~\eqref{dihedral groups} follows from item~\eqref{divide by finite} as well.
\end{proof}

\section{Dependence on metric}\label{dependence section}

We already know that quasi-isometry of warped cones does not imply quasi-isometry of their defining groups or homeomorphism of the spaces acted upon, but if the quasi-isometry is induced by a continuous map $f$, then the groups must be virtually isomorphic and the virtual isomorphism commutes with $f$ (\cref{isomorphism of groups}). One can ask whether the converse holds.

\begin{question}
If two actions are similar, are the respective warped cones quasi-isometric?
\end{question}

This also turns out to be false, in the strongest possible sense.

\begin{prop} There exists an action $\Gamma\acts Y$ and two invariant metrics $d,d'$ on the topological space $Y$ such that the warped cones $\cO_\Gamma (Y,d)$ and $\cO_\Gamma (Y,d')$ are not coarsely equivalent.
\end{prop}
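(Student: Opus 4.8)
The plan is to separate the two warped cones by the asymptotic dimension of the open cone, an invariant that ignores both the acting group and the topological type of $Y$ but is extremely sensitive to the metric. Since a quasi-isometry of families of metric spaces (\cref{ce qi defi}) preserves asymptotic dimension, and since for the trivial group $\cO_\Gamma(Y,d)=\cO(Y,d)=(tY,td)_{t>0}$, it is enough to exhibit a single compact space $Y$ of finite topological dimension, together with two metrics $d,d'$ inducing its topology, such that $\asdim\cO(Y,d)<\infty$ while $\asdim\cO(Y,d')=\infty$; with $\Gamma$ trivial both metrics are automatically invariant, so this proves the proposition as stated. (A genuinely nontrivial free action can be grafted on afterwards by passing to $Y\times Z$ with $\Gamma$ acting on a second factor $Z$ only and using the product metrics, since $tY\times\{z_0\}$ sits isometrically inside $(t(Y\times Z),d_\Gamma)$ and hence cannot lower $\asdim$ of the cone; we do not pursue this.) The bridge to small scales is the standard remark that rescaling the level $(Y,td)$ by $t^{-1}$ converts a covering of $\cO(Y,d)$ that is uniform across levels into a covering of $(Y,d)$ whose mesh-to-Lebesgue-number ratio is bounded at every small scale; thus $\asdim\cO(Y,d)\geq\dimAN(Y,d)$, the Assouad--Nagata dimension, which is the only inequality we shall need.

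For the example take $Y$ to be the Cantor set, realised as a product $Y=\prod_{k\geq 1}F_k$ with each $F_k$ the finite grid $\{0,1,\dots,N_k\}^k$, so $Y$ has topological dimension $0$. With the standard ultrametric $d$ we have $\dimAN(Y,d)=0$, hence $\asdim\cO(Y,d)=0$. For $d'$ fix a rapidly decreasing sequence $\lambda_1>\lambda_2>\cdots\to 0$ and set, writing $x_k\in F_k$ for the $k$-th block of $x$,
\[ d'(x,y)=\sup_{k\geq 1}\ \frac{\lambda_k}{N_k}\,\|x_k-y_k\|_\infty . \]
This is a metric inducing the product topology (immediate from $\lambda_k/N_k\to 0$). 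Choosing $N_k$ large and $\lambda_{k+1}$ much smaller than $\lambda_k/N_k$, the ``bands'' of scales $[\lambda_k/N_k,\lambda_k]$ are pairwise disjoint; on the band around $\lambda_k$ the metric $d'$ is (up to a bounded factor) that of a $(\lambda_k/N_k)$-net in the Euclidean cube $[0,\lambda_k]^k$, so every covering of $(Y,d')$ at such a scale with controlled mesh-to-Lebesgue ratio has multiplicity at least $k+1$. Hence $\dimAN(Y,d')\geq k$ for all $k$, i.e.\ $\asdim\cO(Y,d')=\infty$, so $\cO_\Gamma(Y,d)$ and $\cO_\Gamma(Y,d')$ are not quasi-isometric.

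The work in the proof is the justification of the displayed multiplicity bound, and this is the step I expect to require the most care rather than any one idea. One has to see that at a scale $s$ in the $k$-th band the blocks $F_j$ with $j\neq k$ do not help reduce the multiplicity: those with $\lambda_j<s$ have diameter smaller than $s$ and are irrelevant, those with $\lambda_j/N_j>s$ must be constant on each cover element and only refine the partition, and the $\ell^\infty$-type combination ensures that these contributions do not ``stack'' — the Nagata dimension of a sup-product being the maximum of the factors — so only the genuinely $k$-dimensional block $F_k$ is felt. Combined with the classical computation $\dimAN([0,1]^k)=k$ (applied to the net at the relevant band), this gives $\dimAN(Y,d')\geq k$. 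Finally, one should note that the present proposition is in any case a formal consequence of the stronger \cref{examples from Thiebout and Ana}, whose warped cones are already distinguished by coarse embeddability into a Hilbert space — a coarse, hence quasi-isometry, invariant; the asymptotic-dimension argument above is offered because it stays inside the elementary framework of \cref{main asdim} and needs no analytic input.
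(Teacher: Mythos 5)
Your argument is sound, and it takes exactly the route the paper itself flags, immediately after the proposition, as ``quite trivial'': one can take a trivial action on some $Y$ with a natural metric and find a sufficiently ``wild'' second metric. That is what you do, with $\Gamma=\{1\}$ (so the warped cone collapses to the open cone and invariance is automatic) and a metric $d'$ on a Cantor set whose Assouad--Nagata dimension is infinite; the separating invariant is $\asdim$ of the open cone. What the paper actually goes on to prove is the substantially stronger \cref{examples from Thiebout and Ana}: a genuinely free profinite $\F_3$-action, two invariant metrics for which \emph{both} open cones have $\asdim=0$ (so neither is ``wild'' in your sense), and the warped cones are distinguished by coarse embeddability into Hilbert space, resting on the Delabie--Khukhro box spaces. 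You acknowledge this at the end, which is the right call. The trade-off between the two approaches is clean: your separating invariant is an elementary metric one and needs no external input, but this is bought by making the group degenerate and the second metric pathological at small scales; the paper deliberately rules out both phenomena to make the dynamically meaningful point that the warped-cone geometry truly depends on the interplay of action and metric, not merely on either of them.

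Two points that would need care to be turned into a complete proof. First, the inequality $\asdim\cO(Y,d)\geq\dimAN(Y,d)$ for compact $Y$ is correct, and your rescaling sketch is the right argument, but note the paper records only the converse inequality $\asdim\cO(Y,d)\leq\dimAN(Y,d)$ (in the proof of \cref{corollary for eqasdim}), so you would have to spell out the rescaling in the language of Definition \ref{asdimAN}. Second, in justifying the multiplicity lower bound at the $k$-th band you appeal, as a shortcut, to ``the Nagata dimension of a sup-product being the maximum of the factors''; that is not a correct general principle (already $\R\times\R$ with the $\ell^\infty$ metric has $\dimAN=2$, not $1$). The direct local argument you also describe is the right one and does work, but it requires both scale separations explicitly: $\lambda_{k-1}/N_{k-1}>C\,\lambda_k/N_k$ so that earlier blocks are frozen on any $Cs$-bounded piece, and $N_k>C$ together with $\lambda_{k+1}\ll\lambda_k/N_k$ so that the $k$-th block behaves like a fine net in $[0,\lambda_k]^k$ and later blocks are invisible; in particular $N_k\to\infty$ is needed so that the choice of $k$ can be made after $C$ and $d$ are given. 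With those quantifiers in the right order the Lebesgue covering theorem for the cube finishes it.
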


The above proposition is quite trivial, the way that it is stated, as one can consider the trivial action on some space $(Y,d')$ and find a sufficiently `dissimilar' or `wild' metric~$d$.

What we will actually prove is that there is an action $\Gamma\acts Y$ and invariant metrics $d,d'$ such that there is an isometric embedding $(f_t\colon (tY, d) \to (tY, d'))_{t>0}$ (showing similarity of the metrics) and $\cO_\Gamma (Y,d)$ does not admit a coarse embedding into a Hilbert space yet $\cO_\Gamma (Y,d')$ does. (In particular, there is no coarse embedding $\cO_\Gamma (Y,d)\to \cO_\Gamma (Y,d')$ or a quasi-isometry between them.)

This is especially surprising if one realises that many coarse invariants of warped cones---like asymptotic dimension (see \cref{WZ} below, which is a result of J.~Wu and J.\ Zacharias), property A \cites{Roe, SW}, fibred coarse embeddability \cites{Roe,SW}, and expansion \cite{Vigolo} or super-expansion \cite{superexp}, as well as various \emph{piecewise} properties that we introduce in these notes (see \namecref{piecewise section}s \labelcref{piecewise section}--\labelcref{penultimate section})---admit characterisations in terms of the group action.

In other words, it turns out that \emph{one cannot predict coarse embeddability of the warped cone by just looking at the group, action, and the metric space $Y$, but it is necessary to consider all these jointly,} because both metrics $d,d'$ are very similar, if we disregard the action, and the difference between $\cO_\Gamma (Y,d)$ and $\cO_\Gamma (Y,d')$ only comes from the interplay of the action and the metrics.

\begin{thm}\label{examples from Thiebout and Ana}
There exists a compact space $Y$ with an action of the free group $\F_3$ admitting two invariant metrics $d$ and $d'$ such that:
\begin{itemize}
\item both infinite cones $\cO (Y,d)$ and $\cO (Y,d')$ have asymptotic (Assouad--Nagata) dimension equal to $0$;
\item there is an isometric embedding $f\colon (Y,d) \to (Y,d')$;
\item both warped cones $\cO_{\F_3}(Y,d)$, $\cO_{\F_3}(Y,d')$ do not have property A;
\item both warped cones $\cO_{\F_3}(Y,d)$, $\cO_{\F_3}(Y,d')$ admit a fibred coarse embedding into a Hilbert space;
\end{itemize}
however:
\begin{itemize}
\item the warped cone $\cO_{\F_3}(Y,d)$ does not admit a coarse embedding into the Hilbert space and $\cO_{\F_3}(Y,d')$ does;
\item in particular, there is no coarse embedding  $\cO_{\F_3}(Y,d) \to \cO_{\F_3}(Y,d')$, so they cannot be quasi-isometric.
\end{itemize}
\end{thm}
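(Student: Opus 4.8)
The plan is to set the example up as a profinite action and reduce every clause to a statement about box spaces of $\F_3$. Fix a descending chain $(\Gamma_k)_k$ of finite-index normal subgroups of $\F_3$ with $\bigcap_k\Gamma_k=\{e\}$, put $Y=\varprojlim_k\F_3/\Gamma_k$, embed $\F_3$ densely, and let it act by translation; this action is automatically minimal, free (the map $\F_3\to Y$ is injective), and preserves the Haar probability measure, which is ergodic because $\F_3$ is dense, so the ``Moreover'' clause is immediate. Any $\F_3$-invariant metric on $Y$ is an ultrametric determined by the diameters of the open normal subgroups $U_k=\ker(Y\to\F_3/\Gamma_k)$, and by \cite{completions} (realisation of box spaces as warped cones over profinite completions) each level $(tY,d_\Gamma)$ of the warped cone is, uniformly in $t$, coarsely equivalent to a Cayley graph of one of the $\F_3/\Gamma_k$, the quotient depending on the diameter profile of the metric at the scale $t$. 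Thus, up to coarse equivalence, $\cO_{\F_3}(Y,d)$ is a box space of $\F_3$, and by varying the invariant ultrametric -- in particular by letting its diameter profile be constant on blocks, which suppresses the corresponding quotients -- one realises various sub-box-spaces of $(\F_3/\Gamma_k)_k$. First I would make this dictionary precise.

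The heart of the proof is to choose $(\Gamma_k)$ together with two invariant ultrametrics $d,d'$ so that $\cO_{\F_3}(Y,d)$ is coarsely a box space of $\F_3$ with no coarse embedding into $\ell^2$, while $\cO_{\F_3}(Y,d')$ is coarsely a box space of $\F_3$ that does coarsely embed. The embeddable side uses the a-T-menability of $\F_3$: a sufficiently slow/deep choice of quotients yields a box space that coarsely embeds into $\ell^2$ in the spirit of Arzhantseva--Guentner--\v{S}pakula (and still fails property A, as $\F_3$ is non-amenable). The non-embeddable side is the delicate point, and it cannot be a genuine expander: if some cofinal chain of quotients of $Y$ were an expander then, since the Cheeger constant only drops under passing to covers and any two cofinal chains are mutually cofinal, every chain of quotients of $Y$ -- hence $\cO_{\F_3}(Y,d')$ as well -- would be an expander; so $\cO_{\F_3}(Y,d)$ must fail to coarsely embed for a reason compatible with admitting a fibred coarse embedding into $\ell^2$. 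Producing such a pair of box spaces on one and the same profinite completion, so that the invariant metric alone toggles between the two regimes, is where I expect the real obstacle to lie; this is precisely the point at which the construction of Delabie and Khukhro is invoked.

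Granting the construction, the remaining clauses are routine. Since $d$ and $d'$ are ultrametrics, the open cones $\cO(Y,d)$ and $\cO(Y,d')$ are families of rescaled ultrametric spaces, and at every scale the closed balls form a uniformly bounded disjoint cover, so the asymptotic Assouad--Nagata dimension is $0$ (a general property of open cones over ultrametric spaces; cf.\ \cite{completions}). The isometric embedding $f\colon(Y,d)\to(Y,d')$ is produced inside the construction by choosing the two diameter profiles compatibly, using that a profinite group contains clopen subsets that are isometric, for suitable profiles, to the whole space. Neither warped cone has property A, because each is coarsely a box space of the non-amenable $\F_3$, so property A of either would, via the characterisation of property A of $\F_3$ by its warped cones (Proposition \ref{nice group vs cone}), force property A -- hence amenability -- of $\F_3$. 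Both admit a fibred coarse embedding into $\ell^2$: $\F_3$ has the Haagerup property, box spaces of a-T-menable groups always do \cites{CWY,CWW}, and this passes to the coarsely equivalent warped cones. Finally, composing a hypothetical coarse embedding $\cO_{\F_3}(Y,d)\to\cO_{\F_3}(Y,d')$ with the coarse embedding of the target into $\ell^2$ would coarsely embed $\cO_{\F_3}(Y,d)$ into $\ell^2$, which is impossible; hence no such coarse embedding exists, and in particular the two warped cones are not quasi-isometric.
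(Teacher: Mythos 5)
Your plan coincides with the paper's at the skeleton level: take $Y$ to be a profinite completion of $\F_3$, cook up two invariant metrics so that the two warped cones coarsely track two different box spaces, use Delabie--Khukhro to supply the pair of box spaces, and handle the ancillary bullet points as you describe. You also rightly identify that the whole weight of the argument lies in making your ``dictionary'' precise — but you then punt on exactly that point, and your sketch of the isometric embedding does not work. The missing idea that makes a single $Y$ possible at all is the interleaving: Delabie--Khukhro construct one chain $\F_3\supseteq M_1\supseteq N_1\supseteq M_2\supseteq N_2\supseteq\cdots$ of finite-index normal subgroups with trivial intersection, so that the box space of $(M_i)$ fails to embed coarsely into $\ell^2$ while the box space of $(N_i)$ embeds. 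Because $(M_i)$ and $(N_i)$ are both cofinal subchains of the same chain, the inverse limits $\varprojlim\F_3/M_i$ and $\varprojlim\F_3/N_i$ are canonically the same topological $\F_3$-space $Y$, and the two metrics $d,d'$ are simply the ultrametrics on $Y$ inherited from the embeddings $Y\hookrightarrow\prod\F_3/M_i$ and $Y\hookrightarrow\prod\F_3/N_i$. Your ``diameter profile constant on blocks'' picture is gesturing at this, but it is only the explicit interleaving that identifies the two underlying spaces; without it you have two profinite completions, not one $Y$ with two metrics.

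The step that actually fails is the isometric embedding. ``A profinite group contains clopen subsets isometric (for suitable profiles) to the whole space'' is not what is required and does not produce a globally defined isometric embedding $(Y,d)\to(Y,d')$ for the specific product metrics at hand. What one constructs instead is a compatible system of injections $e_i\colon\F_3/M_i\to\F_3/N_i$ with $q\circ e_i=e_{i-1}\circ q$, so that $\prod e_i$ restricts to an isometry of $(Y,d)$ onto its image inside $(Y,d')$ (both sides using the same scale sequence $(a_j)$). Building the $e_i$ by induction requires the fiber-size inequality $[\F_3:M_i]/[\F_3:M_{i-1}]\leq[\F_3:N_i]/[\F_3:N_{i-1}]$, which must first be arranged by passing to a subsequence of the interleaved chain; the inductive step is then elementary counting. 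Two smaller remarks: your assertion that every $\F_3$-invariant metric on $Y$ is an ultrametric is false (already the Hamming metric on a finite group is a counterexample), but harmless since only the two product metrics are used; and your digression about expanders and Cheeger constants, while not incorrect, is not needed — the (non-)embeddability of the two warped cones is deduced directly from the (non-)embeddability of the corresponding box spaces via the warped-cone-versus-profinite-completion results in \cite{completions}, assuming the rate of decay of $(a_j)$ is chosen fast enough for those results to apply.
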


The action that we construct to prove the above is the action on a profinite group $Y$ by a dense subgroup $\Gamma < Y$, in particular it is: minimal, free, and admitting an ergodic invariant probability measure. Our construction relies on examples of box spaces obtained recently by Delabie and Khukhro \cite{DK}. They construct a sequence of normal subgroups in the free group $\F_3 \trianglerighteq M_1 \trianglerighteq N_1 \trianglerighteq M_2 \trianglerighteq N_2 \trianglerighteq\,\cdots$ with trivial intersection such that the box space associated to the sequence $M_i$ does not embed coarsely into the Hilbert space and the box space associated to $N_i$ does.

To a chain of quotients $G_i=\Gamma/\Gamma_i$ as in the definition of a box space (Subsection Examples in \cref{definitions}), one can also associate a compact group $\widehat{\Gamma}((\Gamma_i))=\varprojlim G_i$, called a \emph{profinite completion} or the \emph{boundary of the coset tree} (vertices of the tree are cosets of $\Gamma_i$). As $\Gamma$ maps into each $G_i$, it maps into the inverse limit, and this map is an injection onto a dense subgroup. In particular, $\Gamma$ acts on $\widehat{\Gamma}((\Gamma_i))$ by left translations. The inverse limit can be realised as a subset of the product $\prod_i G_i$: 
\[\{(g_i)_{i\geq 0} \st g_{i-1}=q(g_i)\ \forall i\geq 1 \}\]
(where $q\colon \Gamma/\Gamma_i\to \Gamma/\Gamma_{i-1}$ is the quotient map)
and hence it inherits the $\ell_\infty$-product metric $d((g_i),(h_i)) = a_j$, where $j$ is the first index with $g_j\neq h_j$ and the sequence $a_j>0$ converges monotonically to $0$.

The author showed in \cite{completions} that the box space and the completion as above are closely related. If one constructs the warped cone over $\widehat{\Gamma}((\Gamma_i))$ then it has property A if and only if the box space $(G_i)_i$ does. Moreover, if the rate of convergence of $a_j$ is large enough, then the warped cone embeds coarsely into the Hilbert space if and only if the box space does. Furthermore, the box space embeds quasi-isometrically into the warped cone. However, it was not clear how important the choice of metric is to these results. Theorem \ref{examples from Thiebout and Ana} shows that it is indeed crucial.

Note also that during his lecture in Cambridge in February 2017 Pierre Pansu asked the following (see \cite{metric2011-cambridge} for the exact statement):

\begin{question}[Pansu] Is the warped cone $\cO_\Gamma\widehat{\Gamma}((\Gamma_i))$ quasi-isometric with the box space $(\Gamma/\Gamma_i)_i$?
\end{question}

Theorem \ref{examples from Thiebout and Ana} provides a rather negative answer to this question: while by \cite{completions} one may construct a coarse (in fact an almost isometric, that is, quasi-isometric with multiplicative constant $C=1$) embedding $(\Gamma/\Gamma_i) \to \cO_\Gamma \widehat{\Gamma}((\Gamma_i))$, this is conditional on a suitable choice of a metric on $\widehat{\Gamma}((\Gamma_i))$ and the coarse geometry of $\cO_\Gamma \widehat{\Gamma}((\Gamma_i))$ depends on this choice, in particular it is not always compatible with the geometry of the box space $(\Gamma/\Gamma_i)$.

\begin{defi}\label{asdimAN}
For a subset $U$ of some metric space and $R>0$, \emph{an $R$-component of $U$} is a class of the equivalence relation on $U$ spanned by identifying points at distance smaller than $R$.

For $k\in \N$, a family of metric spaces $(X_i)_{i\in I}$ has \emph{asymptotic dimension at most $k$}, denoted $\asdim (X_i) \leq k$, if for every number $R\in \N_+$ there exists another number $S(R)\in \N$ such that for every $i\in I$ there is a covering $U_0, \ldots U_k$ of $X_i$ such that diameters of $R$-components of sets $U_l$ are bounded by $S$. The minimal such $k$ is called the asymptotic dimension of $(X_i)$ (if there is no $k$ as above, we say that the dimension is infinite).

If one requires $S$ to be a linear function of $R$, one obtains the definition of \emph{asymptotic Assouad--Nagata dimension} and if additionally $R$ ranges over $(0,\infty)$ rather than $\N_+$ of \emph{Assouad--Nagata dimension}, which are denoted by $\asdimAN$ and $\dimAN$, respectively.
\end{defi}

In fact, one can additionally assume the coverings $U_0, \ldots U_k$ to be open.

\begin{proof}[Proof of Theorem \ref{examples from Thiebout and Ana}] Consider a sequence of finite index normal subgroups in the free group $\F_3\trianglerighteq M_1 \trianglerighteq N_1 \trianglerighteq M_2 \trianglerighteq N_2 \trianglerighteq\,\cdots$ such that the box space $(G_i)=(\F_3/M_i)$ does not embed coarsely into the Hilbert space and the box space $(H_i)=(\F_3/N_i)$ does, as obtained in \cite{DK}. After passing to a subsequence we can assume $|{G_i}|/|{G_{i-1}}|\leq |{H_i}|/|{H_{i-1}}|$. We also assume that the sequence $(a_i)$ decreases sufficiently fast for the results of \cite{completions} to apply (to both box spaces $(G_i)$ and $(H_i)$).

Clearly, the inverse limit $Y$ of the inverse system $G_1\leftarrow H_1 \leftarrow G_2\leftarrow H_2 \leftarrow \cdots$ is the same as the inverse limit of the subsystem of $G_i$'s or the subsystem of $H_i$'s. Let $d$ denote the metric on $Y$ induced from the embedding into $\prod G_i$ and $d'$ denote the metric on $Y$ induced from the embedding into $\prod H_i$. We denote the images of these embeddings by $G$ and $H$.

\emph{Asymptotic dimension.~~}
We will now verify that $(tY,d)_{t>0}$ and $(tY,d')_{t>0}$ have Assouad--Nagata dimension $0$. Let $R>0$ and $t>0$. In the case of dimension zero, we have no choice but to put $U_0=tY$. The space $(tY,d)$ is an ultrametric space, that is, its metric satisfies a strong form of the triangle inequality: $d(tx,tz)\leq \max(d(tx,ty),d(ty,tz))$ (the same holds for $(tY,d')$). Hence, $R$-components of $U_0=tY$ have diameter bounded by $R$, so we can put $S(R)=R$ in the definition of Assouad--Nagata dimension. (Such an $R$-component of $(tY,d)$ consists of all points $t(g_i)\in tY$ with prescribed values of coordinates $g_i$ for $i<j$, where $j$ is the least index such that $ta_j<R$.)

\emph{Isometric embedding.~~} Now, we will construct an isometric embedding $e\colon (Y,d)\to (Y,d')$ by defining injective maps $e_i\colon G_i\to H_i$ such that $q\circ e_i = e_{i-1}\circ q$, where $q$'s are the appropriate quotient maps, and hence the product map $\prod e_i\colon \prod G_i \to \prod H_i$ will map $G$ into $H$.

Let $e_1$ be any injective map from $G_1$ to $H_1$ (we assume $G_0=H_0$ is the trivial group, so the assumption $|{G_i}|/|{G_{i-1}}|\leq |{H_i}|/|{H_{i-1}}|$ allows us to do that). Then, $e_2$ must map any element $g_2\in G_2$ into the set $q^{-1}(e_1(q(g_2)))$. This set has $|{H_2}|/|{H_1}|$ elements and there are $|{G_2}|/|{G_1}|$ elements in the set $q^{-1}(q(g_2))$ that must be mapped there. The existence of $e_2$ follows from our assumption on cardinalities and the existence of $e$ by induction. By construction $e$ is an isometry.

\emph{Property A and fibred embeddability.~~} Since $\F_3$ is non-amenable, lack of property A follows from \cite{Roe-cones}*{Proposition 4.1} (it is formulated in a less general setting, but the proof remains valid in our case; alternatively, one can use \cite{completions}*{Proposition 6.1}). The existence of a fibred coarse embedding was proved in \cite{SW}*{Theorem 3.2 together with Lemma 3.14}.

\emph{(Non-)embeddability.~~} Embeddability of a warped cone coming from an embeddable box space follows from \cite{completions}*{Theorem 7.4} and the analogous statement for non-embeddability follows from \cite{completions}*{Corollary 7.6}.
\end{proof}

\section{Asymptotically faithful coverings and piecewise properties}\label{piecewise section}

In this section, we introduce \emph{piecewise} versions of metric invariants, which---for spaces $X$ which cover asymptotically faithfully a sequence of spaces $Y_i$---enables us to characterise properties of $X$ by the piecewise versions of these properties for $(Y_i)$. The bulk of this section is devoted to proving simple lemmas establishing properties of this new notion.

Inspiration comes from the following recent question of Osajda.
\begin{question}[Osajda \cite{residually finite non-exact}]
What  are  coarse  geometric  properties  of  box  spaces  of  groups without property A?   Can  lack of property A  of  a  group  be  characterized  by  coarse
geometric properties of its box space?
\end{question}
Our result is more general than this question, as, apart from property A, it applies for instance to hyperbolicity, asymptotic dimension, or coarse embeddability into Banach spaces. Osajda explains in \cite{residually finite non-exact} that the answer to his original question is affirmative by unpublished results of Thibault Pillon via a new notion called \emph{fibred property A}. This name resembles \emph{fibred} coarse embeddability of \cite{CWY}, which is a much stronger condition than piecewise coarse embeddability introduced below.

While it is theoretically appealing that we can, in particular, characterise many geometric properties of groups by geometric properties of their box spaces (so far, one could do that for `group-theoretic' or `equivariant' properties like amenability \cite{Roe} or the Haagerup property \cites{Roe, CWY, CWW} and, more generally, property $P\B$ \cite{Arnt}), we are mostly interested in applications of this new notion. These are provided, in the case of asymptotic dimension, by a recent result of Yamauchi \cite{Yamauchi}, which we formulate at the end of this section using the general language developed here. Applications to warped cones are given in the next section.

\smallskip In what follows, we say that a collection of metric spaces, say a warped cone $(tY,d_\Gamma)_{t>0}$, has some property $P$ if all the spaces in the collection satisfy property $P$ with a uniform estimate on the respective constants. Examples of such {properties} include hyperbolicity (with a uniform constant $\delta$), coarse embeddability into a Banach space $E$ (with uniform estimates $\rho_-,\rho_+$ as in Definition \ref{ce qi defi}), asymptotic dimension at most $k$ (with a uniform function $R\mapsto S(R)$), or property A (with a uniform function $R\mapsto S(R)$, see Definition \ref{A defi}); a non-example would be `being a subset of $\R$' because this is not an isometry invariant or `countability of the collection' because it refers to the whole collection.

Such an approach was first introduced by Guentner, Tessera, and Yu in \cite{GTY}---where collections as above are called \emph{metric families}---in order to define \emph{finite decomposition complexity}.
For us this is mainly a matter of convenient language, as for example---for all properties that we will be interested in---a box space $\bigsqcup_i \Gamma/\Gamma_i$ has some property if and only if the property holds for the metric family $(\Gamma/\Gamma_i)_i$. This approach follows the spirit of \cref{ce qi defi}, extending the definition of a quasi-isometry and a coarse embedding from a single map between two metric spaces to families of maps between various spaces.

\smallskip Below, given a metric space $Y$, a scale $R<\infty$, and a forbidden subset $F \subseteq Y$, we consider the metric family $Y^R_{F+}$ of subsets of $Y$ with diameter at most $R$ and not intersecting $F$. Piecewise properties are defined by referring to properties of the family being the union of such $Y^R_{F+}$ over all scales $R<\infty$, where $F$ is a bounded subset allowed to vary with $R$. There is also the corresponding version for $Y$ being a metric family indexed by a linearly ordered set.

\begin{defi} Let $P$ be some property of metric families as discussed above.
\begin{itemize}
\item A metric space $Y$ \emph{has property $P$ piecewise} if for every $R\in \N$ there is a bounded subset $F(R)\subseteq Y$ such that the union $\bigcup_R Y_{F(R)+}^R$ of metric families satisfies property $P$, where
\[Y_{F+}^R \defeq \{ A \st A\subseteq Y\setminus F \text{ and } \diam A \leq R\}.\]
\item A family of metric spaces $(Y_i)_{i\in I}$ indexed by a linearly ordered set $(I,\leq)$ \emph{has property $P$ piecewise} if for every $R\in \N$ there is $j=j(R)\in I$ such that the union $\bigcup_R Y_{j(R)+}^R$ of metric families satisfies property $P$, where
\[Y_{j+}^R \defeq \{ A \st \exists i\geq j : A\subseteq Y_i \text{ and } \diam A \leq R\}.\]
(The definition also makes sense for $I$ being a directed set, but we will only consider the case $I=\N$ and $I=(0,\infty)$.)
\item Property $P$ is \emph{inherited by subsets} if for any metric family $\mathcal Y$ having property $P$, also the family $\mathcal Y_\supseteq = \{A \st \exists Y\in \mathcal Y : A\subseteq Y\}$ has property $P$.
\item Property $P$ is \emph{finitely determined} if for any proper and discrete metric space $X$, the following implication holds: if the metric family
\[X_{<\infty} = \{A \subseteq X \st \diam A < \infty\}\]
satisfies property $P$, then $X$ satisfies property $P$.
\item Property $P$ is \emph{coarsely invariant} if for any metric families $\mathcal Y$ and $\mathcal X$ and a coarse equivalence $f\colon\mathcal Y\to\mathcal X$, the family $\mathcal Y$ satisfies property $P$ if and only if $\mathcal X$ does.
\end{itemize}
\end{defi}

Clearly, property $P$ implies piecewise property $P$ under the assumption that $P$ is inherited by subsets.

It is well known that properties like property A, asymptotic dimension at most~$k$, or coarse embeddability into some Banach space $E$ are all inherited by subsets and coarsely invariant. Hyperbolicity (with a fixed constant $\delta$) is obviously finitely determined, as it can be defined using Gromov's `four point condition'. The fact that coarse embeddability into a fixed Banach space is finitely determined is the main result of \cite{Ostrovskii finite} (see also \cite{Baudier} for the case of $L^p$-spaces). Finite determination for property A and asymptotic dimension is verified in Proposition \ref{determination A asdim}.

Let us check some basic properties of our definitions.

\begin{lem} If $P$ is coarsely invariant and $(f_i\colon Y_i\to X_i)$ is a coarse equivalence, then $(Y_i)$ has property $P$ piecewise if and only if $(X_i)$ does.
\end{lem}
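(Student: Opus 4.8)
The plan is to reduce the statement to a single invocation of the coarse invariance of $P$, after transporting the witnessing metric families along $f=(f_i)$. Since a coarse equivalence of metric families admits a coarse inverse $g=(g_i)$ which is again a coarse equivalence with the same kind of uniform control, it suffices to prove one implication, say that $(X_i)$ having property $P$ piecewise forces $(Y_i)$ to have property $P$ piecewise; the converse then follows by running the same argument with $g$ in place of $f$. Fix non-decreasing control functions $\rho_-,\rho_+$ for $f$ as in Definition~\ref{ce qi defi}.

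So assume $(X_i)$ has $P$ piecewise, and let $j\colon\N\to I$ be a witnessing function, so that the metric family $\bigcup_R X_{j(R)+}^R$ satisfies $P$. Put $j'(R)=j(\ceil{\rho_+(R)})$ and consider the candidate witness $\cY'=\bigcup_R Y_{j'(R)+}^R$ for $(Y_i)$. The key point is that $f$ carries the members of $\cY'$ into $\bigcup_R X_{j(R)+}^R$: if $B\subseteq Y_i$ with $\diam B\le R$ and $i\ge j'(R)$, then $f_i(B)\subseteq X_i$ satisfies $\diam f_i(B)\le\rho_+(R)\le\ceil{\rho_+(R)}=:R'$, while $i\ge j'(R)=j(R')$, so $f_i(B)\in X_{j(R')+}^{R'}$. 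Hence the metric family $f(\cY')=\{f_i(B)\st B\in\cY',\ B\subseteq Y_i\}$ is a subfamily of $\bigcup_R X_{j(R)+}^R$, and therefore satisfies $P$ — recall that a metric family has property $P$ precisely when its members satisfy $P$ with a uniform estimate, which passes verbatim to any subfamily. On the other hand, for each such $B$ the restriction $f_i|_B\colon B\to f_i(B)$ is a coarse embedding with control functions $\rho_\pm$ not depending on $B$ or $i$, and it is surjective onto its image by construction; hence $(f_i|_B)_{B}$ is a coarse equivalence of metric families $\cY'\to f(\cY')$. Since $P$ is coarsely invariant and $f(\cY')$ satisfies $P$, so does $\cY'$; that is, $j'$ witnesses that $(Y_i)$ has $P$ piecewise.

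I do not expect a genuine obstacle; the argument is essentially bookkeeping. The one step that needs a line of justification is the uniform passage to a coarse inverse $g=(g_i)$ used for the reverse implication (choose for each point of $Y_i$ a point in its $f_i$-preimage and check that the resulting $g_i$ are uniform coarse embeddings with $g_if_i$ and $f_ig_i$ uniformly close to the identity), together with the observation already isolated above — that the index shift $j'=j\circ\ceil{\rho_+(\cdot)}$ is legitimate because $\rho_+$ is non-decreasing and $j$ enters only as a function $\N\to I$. Everything else is a direct consequence of the fact that $f$ sends $R$-bounded subsets of $X_i$ far out in the index to $\rho_+(R)$-bounded subsets equally far out, so the two witnessing families are coarsely equivalent and coarse invariance of $P$ closes the argument.
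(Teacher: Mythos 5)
Your proof is correct and takes essentially the same route as the paper: both argue one direction (say $(X_i)\Rightarrow(Y_i)$) by noting that $f$ carries $R$-bounded subsets of far-out $Y_i$'s to $\rho_+(R)$-bounded subsets at the same index, so the candidate witness for $(Y_i)$ is sent into a subfamily of the witness for $(X_i)$, and coarse invariance of $P$ closes the argument. Your $j'(R)=j(\ceil{\rho_+(R)})$ is exactly the paper's $j(S(R))$ with $S(R)$ made explicit, and your remark on passing to the coarse inverse makes explicit the symmetry the paper leaves implicit.
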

\begin{proof} It suffices to show that if $(X_i)$ has property $P$ piecewise, then $(Y_i)$ also does. So assume that $(X_i)$ has property $P$ piecewise.

Let $R\in \N$. For any $j\in I$ we can consider the `image' of $Y^R_{j+}$, that is, $f(Y^R_{j+}) \defeq \{f_i(A) \st i\geq j,\ A\subseteq Y_i \text{ and } \diam A\leq R\}$. Since $(f_i)$ is a coarse equivalence, there exists $S(R) \in \N$ (not depending on $j$) such that $\diam B \leq S(R)$ for all $B\in f(Y^R_{j+})$. Now, we know that there is $j(S(R))$ such that
\[\bigcup_{R\in \N}X_{j(S(R))+}^{S(R)}\]
satisfies property $P$ and hence the same holds for its subfamily $\bigcup_{R\in \N}f(Y^R_{j(S(R))+})$. Since $P$ is coarsely invariant, $\bigcup_{R\in \N} Y^R_{j(S(R))+}$ satisfies property $P$ as well, which finishes the proof.
\end{proof}

Clearly, the same can be said about quasi-isometrically invariant properties and quasi-isometric maps and likewise for rough isometries (that is, quasi-isometries with multiplicative constant $C=1$), also known as almost isometries.

Let us verify that piecewise properties for coarse disjoint unions and for sequences are equivalent (even if $P$ itself is not a coarse invariant).

\begin{lem} Let $(Y_i)$ be a sequence of bounded metric spaces and $P$ be any property. Then the sequence $(Y_i)$ has piecewise property $P$ if and only if the coarse disjoint union $Y=\bigsqcup_i Y_i$ has piecewise property $P$.
\end{lem}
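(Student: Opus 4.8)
The plan is to translate directly between the two formulations of ``piecewise $P$'': for the sequence $(Y_i)_{i\in\N}$ a witness is an assignment $R\mapsto j(R)$ and one looks at subsets of the tail components $Y_i$, $i\ge j(R)$; for the single space $Y=\bigsqcup_i Y_i$ a witness is an assignment $R\mapsto F(R)$ of bounded sets and one looks at subsets of $Y\setminus F(R)$. The bridge is the defining behaviour of a coarse disjoint sum: (a) the metric of $Y$ restricted to any component $Y_i$ is the original metric of $Y_i$; (b) since $d(Y_j,\bigsqcup_{i\ne j}Y_i)\to\infty$, for every $R$ there is $N(R)$ so that the components $Y_i$ with $i\ge N(R)$ are pairwise more than $R$ apart, whence any subset of $\bigsqcup_{i\ge N(R)}Y_i$ of diameter $\le R$ lies in a single component; and (c) every bounded subset of $Y$ sits in some ball $B(x_0,\rho)$ and hence, again by (b), meets only finitely many components, while a finite union of (bounded) components is bounded. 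Throughout I use that a subfamily of a metric family satisfying $P$ again satisfies $P$, which is immediate from the convention that ``$P$ for a family'' means ``$P$ for each member, with uniform constants''.

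For the implication ``$(Y_i)$ piecewise $P$ $\Rightarrow$ $Y$ piecewise $P$'', let $R\mapsto j(R)$ witness piecewise $P$ for $(Y_i)$, so the metric family $\bigcup_R Y_{j(R)+}^R$ satisfies $P$. For each $R$ put $m(R)=\max(j(R),N(R))$ and $F(R)=\bigsqcup_{i<m(R)}Y_i$, a bounded set by (c). If $A\subseteq Y\setminus F(R)=\bigsqcup_{i\ge m(R)}Y_i$ has $\diam A\le R$, then by (b) $A$ lies in one component $Y_i$ with $i\ge m(R)\ge j(R)$, and by (a) the metric it inherits from $Y$ is the one it inherits from $Y_i$; hence $A\in Y_{j(R)+}^R$. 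Thus $Y_{F(R)+}^R\subseteq Y_{j(R)+}^R$ for every $R$, so $\bigcup_R Y_{F(R)+}^R$ is a subfamily of $\bigcup_R Y_{j(R)+}^R$ and therefore also satisfies $P$. So $R\mapsto F(R)$ witnesses piecewise $P$ for $Y$.

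For the converse, let $R\mapsto F(R)$ witness piecewise $P$ for $Y$, so $\bigcup_R Y_{F(R)+}^R$ satisfies $P$. By (c), $F(R)$ meets only components $Y_i$ with $i\le K(R)$ for some $K(R)$; set $j(R)=K(R)+1$. For $i\ge j(R)$ we have $Y_i\subseteq Y\setminus F(R)$, so any $A\subseteq Y_i$ with $\diam A\le R$ is a subset of $Y\setminus F(R)$ whose $Y$-diameter equals its $Y_i$-diameter by (a); hence $A\in Y_{F(R)+}^R$. Therefore $Y_{j(R)+}^R\subseteq Y_{F(R)+}^R$ for every $R$, so $\bigcup_R Y_{j(R)+}^R$ is a subfamily of $\bigcup_R Y_{F(R)+}^R$ and inherits $P$. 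So $R\mapsto j(R)$ witnesses piecewise $P$ for $(Y_i)$.

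I do not anticipate a genuine obstacle here: the argument is purely a matter of bookkeeping between ``tail of the sequence'' and ``complement of a bounded set'', together with the elementary observations (a)--(c), which are exactly the axioms built into the notion of a coarse disjoint sum. The only point worth stating explicitly (and the reason the hypotheses are so weak) is that no property of $P$ beyond isometry-invariance and the uniform-constants convention is used, which is why the lemma holds for an arbitrary $P$.
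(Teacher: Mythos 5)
Your proof is correct and follows essentially the same route as the paper's: pass between the ``tail of the sequence'' witness $j(R)$ and the ``bounded exceptional set'' witness $F(R)$ by using the coarse-disjoint-sum property that far-out components become $R$-separated, and then observe that one of $\bigcup_R Y^R_{j(R)+}$ and $\bigcup_R Y^R_{F(R)+}$ is a subfamily of the other (in the appropriate direction). Your write-up is more explicit about the three bookkeeping facts (a)--(c), but there is no substantive difference from the paper's argument.
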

\begin{proof}
Let us assume that $(Y_i)$ has piecewise property $P$, that is, for every $R\in \N$ there is $j(R)\in \N$ such that the union $\bigcup_R Y_{j(R)+}^R$ satisfies property $P$. Let $J(R)\geq j(R)$ be so large that $d(Y_k, Y_{k'}) > R$ inside $Y$ for $k \neq k' \geq J(R)$. Then, for $F(R)=\bigcup_{i=0}^{J(R)-1} Y_i$ the family $Y^R_{F(R)+}$ is a subfamily of $Y_{j(R)+}^R$, and hence the union $\bigcup_R Y_{F(R)+}^R$ has property $P$.

In the other direction, it suffices to take $j(R)$ so large that $F(R)\subseteq \bigcup_{i=0}^{j(R)-1} Y_i$ and then the family $Y_{j(R)+}^R$ is a subfamily of $Y^R_{F(R)+}$.
\end{proof}

The main motivation for the introduction of piecewise properties comes from the following two lemmas that lead to nice characterisations (see Theorem \ref{box space vs group} and \cref{nice group vs cone}) and, when combined with other results \cite{Yamauchi}, also to some unexpected equalities (see Corollary \ref{Yamauchi-box-spaces} and Theorem \ref{main asdim}).

\begin{lem}\label{global to piecewise} If $(f_i\colon X_i \to Y_i)$ is asymptotically faithful and $\mathcal X=(X_i)$ satisfies property $P$ which is inherited by subsets, then $(Y_i)$ has property $P$ piecewise.
\end{lem}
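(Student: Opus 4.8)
The plan is to unwind the definitions and produce, for each $R$, an index $j(R)$ whose associated piecewise family $\bigcup_R Y_{j(R)+}^R$ embeds isometrically (hence in particular is a subfamily up to isometry) into the original family $\mathcal X$, so that property $P$ being inherited by subsets does the rest. Concretely, fix $R\in\N$. Since $(f_i)$ is asymptotically faithful, Definition \ref{faithful definition} gives $j=j(R)\in I$ such that for every $i\geq j$ the map $f_i$ is an $R$-local isometry: every $f_i$ restricted to a ball $B(x,R)$ in $X_i$ is an isometry onto $B(f_i(x),R)$ in $Y_i$. I would first observe that this forces $f_i$ to restrict to an isometry on \emph{any} subset $A\subseteq Y_i$ of diameter at most $R$: pick $x\in f_i^{-1}(a)$ for some $a\in A$; then $A\subseteq B(f_i(x),R)$, so $f_i^{-1}|_A$ is a well-defined isometric section of $f_i$ onto a subset $\widetilde A\subseteq B(x,R)\subseteq X_i$ with $\widetilde A$ isometric to $A$.

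Next I would assemble these sections across all $R$. For each $R$ and each $A\in Y_{j(R)+}^R$ (so $A\subseteq Y_i$, $i\geq j(R)$, $\diam A\leq R$) the construction above yields a subset $\widetilde A\subseteq X_i$ isometric to $A$. Hence the metric family $\bigcup_R Y_{j(R)+}^R$ is isometric to a subfamily of $\mathcal X_\supseteq = \{A \st \exists X\in\mathcal X : A\subseteq X\}$. By hypothesis $\mathcal X$ has property $P$ and $P$ is inherited by subsets, so $\mathcal X_\supseteq$ has property $P$; since $P$ is a property of metric spaces (invariant under isometries) and the piecewise family is isometric to a subfamily of $\mathcal X_\supseteq$, it too has property $P$. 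As $R$ was arbitrary, this verifies the definition of ``$(Y_i)$ has property $P$ piecewise.''

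There is essentially one point that needs care rather than a genuine obstacle: making sure the choice $j(R)$ coming from asymptotic faithfulness is used consistently, i.e.\ that one takes a single $j(R)$ working simultaneously for all sets of diameter $\leq R$ sitting in levels $i\geq j(R)$, which is exactly what Definition \ref{faithful definition} provides (the same $j$ works for every $i\geq j$ and every center $x$). A secondary subtlety is the passage from ``isometric on each ball $B(x,R)$'' to ``isometric on each diameter-$\leq R$ subset''; this is immediate because a diameter-$\leq R$ set containing $a$ lies inside $B(f_i(x),R)$ for any preimage $x$ of $a$, but it is worth spelling out so that one is applying $P$'s subset-heredity to honest isometric copies. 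For the version with a single space $Y$ (the ``bounded set $B$'' reformulation after Definition \ref{faithful definition}) the argument is verbatim the same, replacing ``$i\geq j$'' by ``points outside $f^{-1}(B)$''.
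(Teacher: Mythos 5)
Your proposal is correct and follows essentially the same route as the paper's own proof: for each $R$, use asymptotic faithfulness to choose $j(R)$ so that every diameter-$\leq R$ subset of a level $i\geq j(R)$ lifts isometrically into $X_i$, then invoke heredity of $P$ under subsets together with isometry-invariance. The only difference is that you spell out the step from ``isometric on $R$-balls'' to ``isometric on diameter-$\leq R$ subsets,'' which the paper leaves implicit.
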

\begin{proof} Let $R\in\N$. By asymptotic faithfulness there exists $j(R)\in I$ such that the inverse image of any $A\in Y^R_{j(R)+}$ via the appropriate $f_i$ is a union of its isometric copies. Consider the family  $\U\defeq \bigcup_{R\in \N} f^{-1}(Y^R_{j(R)+})$ consisting of such inverse images of all $A\in \bigcup_{R\in\N} Y^R_{j(R)+} \eqdef \mathcal D$. Since $\U$ is a subfamily of $\mathcal X_\supseteq$ and property $P$ is inherited by subsets, the family $\U$ satisfies $P$. As elements of $\U$ consist of copies of elements of the family $\mathcal D$, the same holds for $\mathcal D$.
\end{proof}

Similarly, if $f\colon X\to Y$ is a single asymptotically faithful map and $X$ has property $P$ inherited by subsets, then $Y$ has property $P$ piecewise.

\begin{lem}\label{piecewise to global} Let $X$ be a discrete proper metric space, $P$ be finitely determined and $(f_i\colon X\to Y_i)$ be an asymptotically faithful covering such that $(Y_i)$ has property $P$ piecewise. Then $X$ has property $P$.
\end{lem}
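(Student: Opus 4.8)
The plan is to reduce the global property of $X$ to the piecewise property of $(Y_i)$ by means of the finite-determination hypothesis, which tells us it suffices to prove that the metric family $X_{<\infty}=\{A\subseteq X\st \diam A<\infty\}$ has property $P$. So fix an arbitrary finite-diameter subset $A\subseteq X$, say $\diam A\le R$ for some $R\in\N$, and try to realise $A$ isometrically inside one of the $Y_i$. This is exactly where asymptotic faithfulness enters. By Definition \ref{faithful definition}, for the given $R$ there is $j=j(2R)\in I$ such that for every $i\ge j$ and every $x\in X$ the restriction $f_i\colon B(x,2R)\to B(f_i(x),2R)$ is an isometry. Picking any basepoint $x_0\in A$, the set $A$ is contained in the ball $B(x_0,R)\subseteq B(x_0,2R)$, so $f_i$ maps $A$ isometrically onto a subset $f_i(A)\subseteq Y_i$ with $\diam f_i(A)=\diam A\le R$; in particular $f_i(A)\in Y^R_{j+}$ for every $i\ge j$.

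Next I would invoke the piecewise hypothesis: $(Y_i)$ has property $P$ piecewise, so there is an index $j'=j'(R)\in I$ such that the metric family $\bigcup_{R'\in\N} Y^{R'}_{j'(R')+}$ satisfies $P$. Taking $i$ larger than both $j(2R)$ and $j'(R)$, the element $f_i(A)$ lies in this family, hence $f_i(A)$ — and therefore its isometric copy $A$ — satisfies $P$ with the uniform constants attached to that family. Crucially these constants depend only on $R$ through the function $R\mapsto j(R)$ and the piecewise data, not on the particular $A$ of diameter $\le R$; so ranging over all $R\in\N$ we obtain that every member of $X_{<\infty}$ satisfies $P$ with constants bounded in terms of its diameter, i.e.\ the metric family $X_{<\infty}$ satisfies property $P$ with a single uniform estimate (this is the correct reading of ``the family $X_{<\infty}$ satisfies $P$'' in the definition of finite determination, since property $P$ for a metric family means uniform constants across the family). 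Now finite determination of $P$ yields that $X$ itself has property $P$, which is the claim.

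The one point requiring a little care — and the likely main obstacle — is bookkeeping of the quantifiers so that the uniformity across $X_{<\infty}$ genuinely holds. One must check that for a fixed diameter bound $R$ the same ambient family $\bigcup_{R'}Y^{R'}_{j'(R')+}$ (with its single set of $P$-constants) absorbs all subsets $A$ with $\diam A\le R$, so the $P$-constant for $A$ is a function of $\diam A$ alone; this is immediate from the above since, for each $R$, a single index $i=i(R)\ge\max\{j(2R),j'(R)\}$ works simultaneously for all such $A$. A secondary subtlety is that properness and discreteness of $X$ are used only to make ``$X_{<\infty}$ satisfies $P$ $\Rightarrow$ $X$ satisfies $P$'' applicable, i.e.\ they are exactly the hypotheses built into the definition of \emph{finitely determined}; no further use of them is needed. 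Everything else is a direct chase through Definitions \ref{faithful definition} and of piecewise property $P$.
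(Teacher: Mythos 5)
Your proposal is correct and follows essentially the same route as the paper's proof: use asymptotic faithfulness to realise each finite-diameter $A\subseteq X$ isometrically as some $f_i(A)$ lying inside the family $\bigcup_R Y^R_{j(R)+}$ from the piecewise hypothesis, and then invoke finite determination. One cosmetic remark: the $P$-constants that $A$ inherits are genuinely uniform, since every $f_i(A)$ lands in that \emph{single} family with its one set of $P$-constants, so the phrase ``with constants bounded in terms of its diameter'' is misleading -- it is the uniformity, which you correctly assert in the clause that follows, that finite determination requires.
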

\begin{proof}
Let $R\in \N$ and let $A\subseteq X$ have diameter bounded by $R$. By the asymptotic faithfulness there is $j(R)\in \N$ such that $f_i\colon B(x,R) \to B(f_i(x), R)$ is an isometry for any $x \in X$ and $i\geq j$. By piecewise property $P$ we may increase $j(R)$ so that the family 
\[\{f_{i}(A) \st R \in \N\text{, } i\geq j(R)\text{, } A \subseteq X \text{ and } \diam A\leq R \}\]
 (as a subfamily of $\bigcup_R Y_{j(R)+}^R$) satisfies property $P$. But the above family is isometric to
 \[\{ A \st A \subseteq X \text{ and } \diam A < \infty \} = X_{<\infty}\]
 and hence $X$ has property $P$ by finite determination.
\end{proof}

We will use the following definition of property A, equivalent to the original one \cite{A} for bounded geometry metric spaces \cite{Roe-cones} (see also \cite{SW}).

\begin{defi}\label{A defi} A metric space $X$ has \emph{property A} if for every $R\in \N$ there is $S\in \N$ and a map $f\colon X\to \Prob(X)$ such that $\|f(x) - f(x')\| \leq 1/R$ for $d(x,x')\leq R$ and $\supp f(x) \subseteq B(x,S)$.
\end{defi}

\begin{prop}\label{determination A asdim} Property A and having asymptotic dimension at most $k\in \N$ are finitely determined.
\end{prop}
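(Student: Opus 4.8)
The plan is to verify finite determination separately for the two properties, in each case unwinding the definition and exploiting that $X$ is proper and discrete. Recall that $X$ is finitely determined for property $P$ if: whenever the metric family $X_{<\infty}$ of all bounded subsets of $X$ has property $P$ (uniformly), then $X$ itself has property $P$. The subtlety in both cases is that "$X_{<\infty}$ has property $P$ uniformly" gives, for each $R$, a single control function valid for \emph{all} bounded subsets $A\subseteq X$ simultaneously; the task is to glue these local witnesses into one global witness on $X$. Properness is used to ensure that balls $B(x,R)$ are finite, so that a ``locally finite patching'' argument produces finitely supported probability measures (for property A) or uniformly bounded covers (for asymptotic dimension).

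For asymptotic dimension at most $k$: fix $R\in\N$. Applying the hypothesis to the bounded subset $A=B(x_0,3R)$ for an arbitrary basepoint $x_0$, we get a uniform bound $S=S(R)$ and, on \emph{every} such ball, a $(k+1)$-colour cover whose $R$-components have diameter $\le S$. I would take a maximal $3S$-separated net $\{x_j\}$ in $X$ (finite intersection with any ball, by properness), and on each ball $B(x_j, S+3R)$ invoke the family hypothesis to obtain sets $U_0^j,\dots,U_k^j$ with $R$-components of diameter $\le S$. Then set $U_i=\bigcup_j (U_i^j\cap B(x_j,S+R))$, say. The key point is that if two points in $U_i$ are within $R$, they land in overlapping local patches indexed by nets points that are close (at distance $\le$ some constant multiple of $S$), so one bounds the $R$-component diameter of $U_i$ by a constant depending only on $S$ and $R$; one must be a little careful about which net points can ``see'' a given $R$-component, but since the net is $3S$-separated only boundedly many $x_j$ are relevant near any point. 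This yields $\asdim X\le k$.

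For property A (Definition \ref{A defi}): fix $R\in\N$. The hypothesis, applied to $A=B(x_0, 2R)$ for arbitrary $x_0$, yields $S=S(R)$ and for each such ball a map $f^A\colon A\to\Prob(A)$ with $\|f^A(x)-f^A(x')\|\le 1/R$ whenever $d(x,x')\le R$ and $\supp f^A(x)\subseteq B(x,S)$. I would then simply define $f\colon X\to\Prob(X)$ by $f(x)=f^{B(x,2R)}_{}(x)$ --- i.e.\ use the witness attached to the ball centred at $x$ itself. Properness guarantees $f(x)$ is a finitely supported probability measure, and $\supp f(x)\subseteq B(x,S)$ is immediate. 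The only thing to check is the Lipschitz-type estimate: for $d(x,x')\le R$ the balls $B(x,2R)$ and $B(x',2R)$ both contain $B(x,R)$, so both $f^{B(x,2R)}$ and $f^{B(x',2R)}$ are defined on the common ball; but they may be \emph{different} witness maps. To fix this, I instead fix, for each $x$, the witness $f^x\defeq f^{B(x,4R)}$ and note that $x'\in B(x,R)$ implies $B(x',R)\subseteq B(x,4R)$, and set $f(x)=f^x(x)$; then for $d(x,x')\le R$ one compares $f^x(x)$ and $f^x(x')$ — both live in the same family member $f^x$ — getting $\|f^x(x)-f^x(x')\|\le 1/R$, and this is what we want since $f(x)=f^x(x)$ and we only need $f(x')=f^{x'}(x')$ — so one more comparison step $\|f^x(x')-f^{x'}(x')\|$ is needed, which again requires coherence between the two witnesses. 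The clean way around this circularity is to run the net argument as above: take a $10R$-net $\{x_j\}$, use one witness $f^{x_j}$ on $B(x_j,S+10R)$, and patch via a Lipschitz partition of unity $\{\varphi_j\}$ subordinate to $\{B(x_j,20R)\}$ (which exists with Lipschitz constant $O(1/R)$ because the net is spread out), setting $f(x)=\sum_j \varphi_j(x)\, f^{x_j}(x)$; the support condition gives $\supp f(x)\subseteq B(x, S+20R)$, and $\|f(x)-f(x')\|$ for $d(x,x')\le R$ is controlled by $\sum_j |\varphi_j(x)-\varphi_j(x')| + \sum_j \varphi_j(x)\|f^{x_j}(x)-f^{x_j}(x')\|$, both $O(1/R)$ — after rescaling $R$ by a constant at the outset, this is $\le 1/R$ as required.

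The main obstacle, in both cases, is precisely this \emph{coherence/patching} issue: the family hypothesis hands us a separate witness over each bounded piece with no compatibility between pieces, and one must engineer a globally defined witness out of them. The standard remedy — passing to a sufficiently separated net and gluing with a Lipschitz partition of unity (for property A) or taking unions of truncated local covers (for asymptotic dimension) — works because properness keeps everything locally finite and because the uniform control function $S(R)$ is independent of the piece. I expect the asymptotic dimension case to be the more delicate bookkeeping exercise (tracking how $R$-components can straddle several patches), while the property A case is routine once the partition-of-unity idea is in place; one should state at the beginning that it suffices to produce the required witnesses after replacing $R$ by $cR$ for a universal constant $c$, which absorbs all the accumulated multiplicative losses.
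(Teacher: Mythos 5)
Your patching strategy for asymptotic dimension has a genuine gap, not merely a bookkeeping issue. The covers $U_0^j,\dots,U_k^j$ produced on different balls $B(x_j,S+3R)$ carry no compatibility with one another, so an $R$-chain in $U_i=\bigcup_j (U_i^j\cap B(x_j,S+R))$ can cross from patch $j$ into an overlapping patch $j'$ where the colour assignments are unrelated, and keep going indefinitely; the observation that near any single point only boundedly many $x_j$ are relevant bounds nothing about the total length of such a chain. Nor can the parameters be tuned to fix this: for a maximal $3S$-separated net, the truncated balls $B(x_j,S+R)$ cover $X$ only when $S+R\geq 3S$, while trapping $R$-components inside single balls requires the balls to be $R$-disjoint, which needs $S\geq 3R$ --- mutually exclusive in the relevant regime. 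One \emph{can} make a decomposition argument go through by splitting $X$ into even and odd metric annuli about a basepoint, observing that each family of annuli is $R$-separated and hence of asymptotic dimension $\leq k$ by the hypothesis, and then invoking the Bell--Dranishnikov finite union theorem to conclude --- but that theorem is a real ingredient your proposal does not supply. Your property A argument has a separate issue: the estimate $\sum_j|\varphi_j(x)-\varphi_j(x')|=O(1/R)$ needs the number of non-vanishing $\varphi_j$ near any point to be uniformly bounded, which for a $10R$-separated net is a bounded geometry condition, not a consequence of ``proper and discrete'' as the definition demands.

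The paper avoids both obstacles by a different device. For each bounded $A$ it encodes the witness as a function $f_A\colon A\to\{0,\dots,k\}$ (for asdim), or, after discretising the values of the probability measures to a fixed finite grid, as a function taking finitely many values at each point (for property A), and then sets $f(x)=\lim_{n\to\U}f_{B(z,n)}(x)$ along a non-principal ultrafilter on $\N$. This manufactures a single globally coherent witness in one stroke: any hypothetical finite violation of the $S$-boundedness (a finite $R$-chain of colour $i$ spanning distance $>S$) or of the variation bound is contained in some ball $B(z,n)$, and by the ultrafilter property this same violation would appear in $f_{B(z,n)}$ for infinitely many $n$, contradicting the hypothesis on bounded pieces. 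No net, no partition of unity, no finite union theorem, and no geometric hypothesis beyond finiteness of balls is needed. This is the key idea your proposal is missing.
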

\begin{proof} Let $X$ be a discrete proper metric space.

Assume first that $X_{<\infty}$ has asymptotic dimension at most $k$. That is, for any $R\in \N$ there is $S\in \N$ such that for any $A\subseteq X$ of finite diameter we have sets $U_0^A, \ldots, U_k^A$ such that $\bigcup_{l=0}^k U_l^A = A$ and $R$-components of $U_l^A$ have diameter bounded by $S$. We can assume that the sets $U_l^A$ are mutually disjoint, and hence the above partition of $A$ into the sets $U_l^A$ can be encoded by a function $f^A\colon A\to \{0,\ldots,k\}$.

Now, consider any non-principal ultrafilter $\U$ on $\N$ and let $f\colon X\to \{0,\ldots,k\}$ be the limit $f(x) = \lim_{n\to \U} f^{B(x_0,n)}(x)$ for some fixed $x_0\in X$. Consider an $R$-component of $f^{-1}(l)$. If its diameter is not $S$-bounded, there must be a pair of points $x,x'\in f^{-1}(l)$ with $d(x,x') > S$ and a finite sequence $x=x_0, x_1, \ldots x_m = x' \in f^{-1}(l)$ with $d(x_{i-1}, x_i) < R$. By properties of ultrafilters, there must be (infinitely many) $n\in \N$ such that $U_l^{B(x_0,n)}$ contains this sequence, and hence some $R$-component of $U_l^{B(x_0, n)}$ has diameter greater than $S$, a contradiction.

As for property A, assume that for every $R\in \N$ there is $S\in \N$ such that for any $A\subseteq X$ of finite diameter we have maps $A \owns x \mapsto f_x^A \in \Prob(A)$ such that $\|f_x^A - f^A_{x'}\| \leq 1/R$ for $x,x'\in A$ with $d(x,x')\leq R$ and that $\supp f_x^A \subseteq B(x,S)$. As before, for every $x\in X$ define $f_x \in \Prob(X)\subseteq \ell_1(X)$ by the formula 
\[f_x(y) = \lim_{n\to \U} f^{B(x_0, n)}_x(y),\]
where $y\in X$.
The limit exists by compactness of the interval $[0,1]$, we still have $\supp f_x \subseteq B(x,S)$, and $f_x$ remains a probability measure by finiteness of $B(x,S)$. Since the difference $f_x - f_{x'}$ is also supported on a finite set, the inequality $\|f_x - f_{x'}\| \leq 1/R$ for $d(x,x')\leq R$ is preserved in the limit as well.
\end{proof}

Hence, we have obtained the following characterisation of metric invariants of a group by metric invariants of its box space.

\begin{thm}\label{box space vs group} Let $P$ be `hyperbolicity',  `property A', `asymptotic dimension at most $k$', or `coarse embeddability into a Banach space $E$'. Let $\Gamma$ be a residually finite group with a box space $(\Gamma/\Gamma_i)$. Then, $\Gamma$ has property $P$ if and only if the box space $(\Gamma/\Gamma_i)$ has property $P$ piecewise.
\end{thm}

The following surprising results were recently obtained by Takamitsu Yamauchi \cite{Yamauchi}. They were formulated for large girth graphs and without using the language of piecewise properties, but Yamauchi's proof remains valid in this more general setting. It was observed by Rufus Willett and mentioned during his lectures in Southampton in March 2017 \cite{metric2011-southampton} (in the case of box spaces).

\begin{thm}[Yamauchi]\label{Yamauchi}
Assume that a family of bounded spaces $(X_i)_{i\in \N}$ has piecewise asymptotic dimension at most $k$ and that $\asdim (X_i)_i \leq n < \infty$. Then in fact $\asdim (X_i)_i \leq k$.
\end{thm}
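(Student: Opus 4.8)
The plan is to show, for every scale $R\in\N_+$, that there is a single constant $S(R)<\infty$ such that every $X_i$ admits a cover $X_i=U_0\cup\dots\cup U_d$ all of whose $R$-components (in the sense of \cref{asdimAN}) have diameter at most $S(R)$; this is exactly the assertion $\asdim(X_i)_i\le d$. The mechanism is to exploit the uniform bound $\asdim(X_i)_i\le n$ \emph{at a deliberately large scale} $\rho$ so as to chop each $X_i$ into uniformly bounded, pairwise $\rho$-separated pieces; to observe that, once the index $i$ is large enough, every such piece is a member of the metric family witnessing piecewise asymptotic dimension at most $d$, hence carries a good cover by $d+1$ sets; and finally to glue these covers back together, first over the pieces of a single chopping class (where the $\rho$-separation makes this automatic) and then over the $n+1$ classes (using the Finite Union Theorem for asymptotic dimension).

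By the hypothesis of piecewise asymptotic dimension at most $d$, fix a function $j\colon\N\to\N$ together with a control function $S_0\colon\N_+\to\N$ witnessing that the metric family $\bigcup_{R'\in\N}X_{j(R')+}^{R'}$ has asymptotic dimension at most $d$, and fix a control function $T\colon\N_+\to\N$ witnessing $\asdim(X_i)_i\le n$. Let the quantitative Finite Union Theorem for asymptotic dimension (due to Bell and Dranishnikov) supply, for each $R\in\N_+$ and each number $m$ of pieces, a threshold $\rho(R,m)\ge R$ and a bound $\Phi(R,m,\,\cdot\,)$ with the property that if a metric space is the union of $m$ subsets each admitting a cover by $d+1$ sets whose $\rho(R,m)$-components have diameter $\le s$, then the whole space admits a cover by $d+1$ sets whose $R$-components have diameter $\le\Phi(R,m,s)$. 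The point to keep in mind is that $\rho(R,m)$ depends on $R$ and $m$ alone.

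Now fix $R\in\N_+$, set $\rho:=\rho(R,n+1)$ and $i_0:=j(T(\rho))$; both depend only on $R$ and $n$. Let $i\ge i_0$. Applying $\asdim(X_i)_i\le n$ at scale $\rho$ yields a cover $X_i=V_0^i\cup\dots\cup V_n^i$ whose $\rho$-components have diameter at most $T(\rho)$. Fix $k\in\{0,\dots,n\}$. For each $\rho$-component $C$ of $V_k^i$ we have $\diam C\le T(\rho)$ and $i\ge i_0=j(T(\rho))$, so $C$ lies in $X_{j(T(\rho))+}^{T(\rho)}$, hence in $\bigcup_{R'\in\N}X_{j(R')+}^{R'}$; therefore $C$ carries a cover $C=W_0^C\cup\dots\cup W_d^C$ whose $\rho$-components have diameter $\le S_0(\rho)$. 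As two distinct $\rho$-components of $V_k^i$ are at distance at least $\rho$ from one another, no $\rho$-chain inside $\bigcup_C W_l^C$ can pass from one $C$ to another, so the sets $\bigcup_C W_l^C$ for $l=0,\dots,d$ form a cover of $V_k^i$ whose $\rho$-components still have diameter $\le S_0(\rho)$. Thus $X_i=\bigcup_{k=0}^n V_k^i$ is a union of $n+1$ subsets each carrying such a cover at scale $\rho=\rho(R,n+1)$, and the Finite Union Theorem produces a cover of $X_i$ by $d+1$ sets whose $R$-components have diameter $\le\Phi(R,n+1,S_0(\rho))$.

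It remains to treat the finitely many indices $i<i_0$; but each $X_i$ is bounded, so the one-set cover $U_0:=X_i$ has all its $R$-components of diameter $\le\diam X_i$. Hence $S(R):=\max\bigl\{\Phi(R,n+1,S_0(\rho)),\ \max_{i<i_0}\diam X_i\bigr\}<\infty$ works for every $X_i$, and since $R\in\N_+$ was arbitrary, $\asdim(X_i)_i\le d$. The genuinely load-bearing point is that the inflated scale $\rho=\rho(R,n+1)$ — the scale at which the hypothesis $\asdim\le n$ is invoked, and the separation that the pieces $C$ enjoy — depends on $R$ and $n$ but never on $i$, so that the cut-off $i_0=j(T(\rho))$ is a single finite number and only finitely many spaces fall outside the main argument; those are automatically bounded, hence of asymptotic dimension $0$. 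Setting up the quantitative Finite Union Theorem and tracking the various ``$R$-component'' scales as covers are amalgamated over separated pieces is routine bookkeeping, but that is where the care must go.
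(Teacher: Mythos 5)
Your overall plan --- decompose each $X_i$ via the $\asdim\le n$ hypothesis into $n+1$ pieces, cover the pieces using the piecewise hypothesis, and reassemble via the Finite Union Theorem --- is the same as the paper's. The organization differs: you fix $R$ and work at a single scale, whereas the paper decomposes $X_i$ at a scale depending on $i$, proves $\asdim(V_{i,l})_i\le d$ for each $l$ as a statement about metric families, and only then invokes the FUT.

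There is, however, a genuine gap in the ``quantitative Finite Union Theorem'' you invoke: there is no threshold $\rho(R,m)$ depending on $R$ and $m$ alone with the stated property. In the standard FUT gluing for $X=A\cup B$, given $\rho$-disjoint $s$-bounded families $\mathcal U_l$ covering $A$ and $\mathcal V_l$ covering $B$, one enlarges each $V\in\mathcal V_l$ to $V'=V\cup\bigcup\{U\in\mathcal U_l : d(U,V)\le R\}$; then for $V_1\ne V_2$ one only gets $d(V_1',V_2')\ge \rho-2(R+s)$, so the disjointness requirement forces $\rho\gtrsim R+s$. The dependence on $s$ is unavoidable. Concretely, take $d=0$, $A=[0,N]\cap\bigcup_k[ks,ks+\tfrac{3}{5}s]$, $B=[0,N]\cap\bigcup_k[ks+\tfrac{2}{5}s,ks+s]$ with $N\gg s$: each of $A,B$ admits a one-set cover whose $\rho$-components are $\tfrac{3}{5}s$-bounded for every $\rho\le\tfrac{2}{5}s$, yet $A\cup B=[0,N]$ is a single $R$-component of diameter $N$, so no bound $\Phi(R,2,\tfrac{3}{5}s)$ independent of $N$ can exist. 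Thus a cover at one fixed scale is not enough; the FUT genuinely consumes covers at an escalating sequence of scales. In your argument this becomes circular, since the diameter bound you feed in is $s=S_0(\rho)$ while $\rho$ was supposed to be fixed before $s$. The repair is standard but must be done: determine the scales iteratively, $\rho_0:=R$, $s_0:=S_0(\rho_0)$, $\rho_{k+1}:=3R+2D_k$ and $s_{k+1}:=S_0(\rho_{k+1})$ (where $D_k$ is the diameter bound the two-set FUT produces after $k+1$ pieces), cover the $\rho$-components of $V_k^i$ at scale $\rho_k$ rather than $\rho$, and finally set $\rho:=\max_k\rho_k$; all of these are determined by $R$ alone, so $i_0=j(T(\rho))$ is still a single finite cutoff. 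This is exactly what the paper does implicitly by first establishing $\asdim(V_{i,l})_i\le d$ for each $l$ (i.e.\ covers at \emph{every} scale) and only afterwards appealing to the FUT for families.
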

\begin{proof} By the assumption, for every $R\in \N$ there is $S(R)\in \N$ such that every $X_i$ admits a partition into sets $U_{i,0}(R), \ldots, U_{i,n}(R)$ whose $R$-components are $S(R)$-bounded. Additionally, for every $S\in \N$ there is $j(S)\in \N$ such that the family of sets $X_{j(S)+}^S$ has asymptotic dimension at most $k$ (uniformly in $S$). We can assume that maps $R\mapsto S(R)$ and $S\mapsto j(S)$ are strictly increasing.

Let us cover every set $X_i$ by $n+1$ subsets $V_{i,0},\ldots, V_{i,n}$ as follows. For $i < j({S(1)})$ we put $V_{i,\iota} = X_i$ for every $0\leq \iota \leq n$. For $j({S(R)}) \leq i < j({S({R+1})})$ we put $V_{i,\iota} = U_{i,\iota}(R)$. Hence, for $j({S(R)}) \leq i < j({S({R+1})})$, $R$-components of $V_{i,\iota}$ are $S(R)$-bounded.

We will show that for every $\iota\in \{0,\ldots, n\}$ the family $(V_{i,\iota})_i$ has asymptotic dimension at most $k$. Fix one such $\iota$ and let $r\in \N$. For $i<j({S(r)})$ we can trivially decompose $V_{i,\iota}$
into sets $W_{i,\iota,0}=\ldots = W_{i,\iota,k} = V_{i,\iota}$ and clearly $r$-components of $W_{i,\iota,l}$ are $(\max_{i<j({S(r)})} \diam (X_i))$-bounded.
For $i\geq j(S(r))$, let $R\geq r$ be such that $j({S(R)}) \leq i < j({S({R+1})})$---then
every $R$-component of $V_{i,\iota}=U_{i,\iota}(R)$ is $S(R)$-bounded and in particular every $r$-component $C$ of $V_{i,\iota}$ is $S(R)$-bounded. Since $i\geq j({S(R)})$, by the assumption of piecewise asymptotic dimension at most~$k$, every such component $C$ admits a decomposition into sets $W_{i,\iota,0}^C, \ldots, W_{i,\iota,k}^C$ whose $r$-components are $T$-bounded (where $T$ depends on $r$, but not on $C$ or $R$). We put $W_{i,\iota,l}=\bigcup_C W_{i,\iota,l}^C$, where the union is over $r$-components of $V_{i,\iota}$. Since $W_{i,\iota,l}^C\subseteq C$, we conclude that $r$-components of the union are still $T$-bounded.

Since every $X_i$ is a finite union $\bigcup_{\iota=0}^n V_{i,\iota}$ (of the fixed length $n+1$) and for every $\iota$ the family $(V_{i,\iota})_i$ has asymptotic dimension at most $k$, by using the finite union theorem for asymptotic dimension \cite{Bell--Dranishnikov} we conclude that the family $(X_i)$ has asymptotic dimension at most $k$.
\end{proof}

\begin{cor}[Yamauchi]\label{Yamauchi-box-spaces} Let $(\Gamma/\Gamma_i)$ be a box space. Then $\asdim (\Gamma/\Gamma_i)$ is infinite or equals $\asdim \Gamma$.
\end{cor}
\begin{proof}
If $\asdim (\Gamma/\Gamma_i) = \infty$, there is nothing to prove, so assume that $\asdim (\Gamma/\Gamma_i)$ is finite. Let $k\leq \asdim (\Gamma/\Gamma_i)$ be the minimal number such that $(\Gamma/\Gamma_i)$ has asymptotic dimension at most $k$ piecewise. By Theorem \ref{box space vs group} $\asdim \Gamma = k$ and by Theorem \ref{Yamauchi} we have $\asdim (\Gamma/\Gamma_i) \leq k$; consequently $\asdim (\Gamma/\Gamma_i) = k = \asdim \Gamma$.
\end{proof}

\section{Piecewise properties and warped cones}\label{penultimate section}

In this section we will apply the definitions and results of the previous section to warped cones. The main result is as follows.

\begin{theorem}\label{main asdim}
Let $M$ be an $m$-dimensional Riemannian manifold and $\Gamma \acts M$ be free and isometric. Assume that $\asdim \cO_\Gamma M < \infty$. Then
\[\asdim \cO_\Gamma M = \asdim \Gamma \times \Z^m.\]
Let $Y$ be an ultrametric space (more generally: whenever $\asdim \cO Y = 0$) with a free Lipschitz action $\Gamma\acts Y$ and assume that $\asdim \cO_\Gamma Y < \infty$, then 
\[\asdim \cO_\Gamma Y = \asdim \Gamma.\]
\end{theorem}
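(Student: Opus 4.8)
The plan is to deduce Theorem~\ref{main asdim} from the combination of the piecewise machinery of Section~\ref{piecewise section} and the asymptotically faithful covering of Theorem~\ref{SW covering}, together with Yamauchi's Theorem~\ref{Yamauchi}. The key observation is that, by Theorem~\ref{SW covering}, the family $(\pi\colon(\Gamma\times tM,d_1)\to(tM,d_\Gamma))_{t>0}$ is an asymptotically faithful covering, and similarly for $Y$; the covering space $\Gamma\times tM$ with the metric $d_1$ is, locally, a product of a ball in $\Gamma$ with a ball in $tM\cong tM$, so on bounded scales it looks like $\Gamma\times\R^m$ (via Riemannian exponential coordinates, as in the proof of Theorem~\ref{qi}), respectively like $\Gamma$ in the ultrametric case (where $Y$ is $0$-dimensional, so small balls are points up to a coarse error). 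Thus I would first record the elementary fact that $\asdim(\Gamma\times tM,d_1)$, uniformly in $t$, equals $\asdim(\Gamma\times\R^m)=\asdim\Gamma+m=\asdim(\Gamma\times\Z^m)$, using the standard product formula for asymptotic dimension and the fact that a Riemannian manifold is coarsely equivalent at bounded scales to Euclidean space; in the ultrametric case the covering is coarsely $\Gamma$ itself.

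First I would prove the inequality $\asdim\cO_\Gamma M\le\asdim(\Gamma\times\Z^m)$. By Lemma~\ref{global to piecewise} applied to the asymptotically faithful covering $\pi$, since $(\Gamma\times tM,d_1)_t$ has asymptotic dimension at most $k\defeq\asdim(\Gamma\times\Z^m)$ (a property inherited by subsets), the family $(tM,d_\Gamma)_t$ has asymptotic dimension at most $k$ \emph{piecewise}. Since we assume $\asdim\cO_\Gamma M<\infty$, Theorem~\ref{Yamauchi} (with $I=(0,\infty)$, which is legitimate since Yamauchi's proof works for any linearly ordered index set as remarked before Theorem~\ref{Yamauchi}) upgrades the piecewise bound to the genuine bound $\asdim\cO_\Gamma M\le k$. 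For the reverse inequality I would use Lemma~\ref{piecewise to global}: the covering space $\Gamma\times tM$, for fixed $t$, is a discrete proper metric space (after passing to a $1$-net, or noting $\Gamma$ is countable and $M$ separable), asymptotic dimension at most $k$ is finitely determined by Proposition~\ref{determination A asdim}, and if $\cO_\Gamma M$ had asymptotic dimension $\le k'$ then it would have asymptotic dimension $\le k'$ piecewise (property $P$ trivially implies piecewise $P$ when $P$ is inherited by subsets), hence $\Gamma\times tM$ would have $\asdim\le k'$, forcing $k\le k'$. Combining the two inequalities gives $\asdim\cO_\Gamma M=k=\asdim(\Gamma\times\Z^m)$.

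The ultrametric case is handled identically, with $\Gamma\times tY$ replacing $\Gamma\times tM$: here the asymptotically faithful covering of Theorem~\ref{SW covering} again applies (the action is Lipschitz, and on bounded scales Lemma~\ref{2.1} controls $d_\Gamma$ by $D_\Gamma$), and the covering space $\Gamma\times tY$ is coarsely equivalent, uniformly in $t$, to $\Gamma$ itself, because an ultrametric space is $0$-dimensional so each bounded ball in $tY$ decomposes into uniformly boundedly many pieces of arbitrarily small diameter — i.e.\ $\asdim(\Gamma\times tY,d_1)=\asdim\Gamma$ uniformly. Then Lemmas~\ref{global to piecewise} and~\ref{piecewise to global} together with Theorem~\ref{Yamauchi} and Proposition~\ref{determination A asdim} give $\asdim\cO_\Gamma Y=\asdim\Gamma$ exactly as above.

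The main obstacle I anticipate is justifying rigorously that $\asdim(\Gamma\times tM,d_1)=\asdim(\Gamma\times\Z^m)$ \emph{uniformly in $t$}, and more precisely that the covering is, uniformly on bounded scales, a product coarsely equivalent to $\Gamma\times\R^m$; this requires essentially the exponential-coordinate argument already carried out in the proof of Theorem~\ref{qi}, showing that balls of radius $R$ in $(\Gamma\times tM,d_1)$ are bi-Lipschitz (with constants independent of $t$, for $t$ large) to balls in $\Gamma\times\R^m$ with the $\ell_1$ metric, and then invoking the fact that asymptotic dimension of a metric family is determined by the bounded-scale structure (which is exactly what the piecewise/finitely-determined formalism of Section~\ref{piecewise section} is built to exploit). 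A secondary point needing care is the passage to a discrete proper model of $\Gamma\times tM$ so that Proposition~\ref{determination A asdim} and Lemma~\ref{piecewise to global} apply; this is routine since replacing a proper metric space by a maximal $1$-separated net changes neither asymptotic dimension nor the existence of the asymptotically faithful covering up to bounded error.
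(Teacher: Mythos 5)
Your overall strategy — use the asymptotically faithful covering of Theorem~\ref{SW covering}, compare bounded balls to $\Gamma\times\R^m$ via exponential coordinates as in the proof of Theorem~\ref{qi}, and then bootstrap with Yamauchi's Theorem~\ref{Yamauchi} and finite determination (Proposition~\ref{determination A asdim}) — is exactly the paper's, and your ``main obstacle'' paragraph contains the correct fix. However, the way you package the argument introduces two genuine gaps.

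The first gap is the ``elementary fact'' that $\asdim(\Gamma\times tM,d_1)$ uniformly equals $\asdim(\Gamma\times\Z^m)$, and in particular the chain $\asdim(\Gamma\times\R^m)=\asdim\Gamma+m=\asdim(\Gamma\times\Z^m)$. The identity $\asdim(\Gamma\times\Z^m)=\asdim\Gamma+m$ is \emph{not} the ``standard product formula'': the Hurewicz-type theorem only gives the inequality $\asdim(\Gamma\times\Z^m)\le\asdim\Gamma+m$, and the reverse (a Morita-type statement) is non-trivial and not needed. Indeed, the paper states the theorem in terms of $\asdim(\Gamma\times\Z^m)$ precisely to avoid this, and Corollary~\ref{weaker asdim than main asdim} (which gives the weaker $\asdim\Gamma+\asdim\cO M$ bound) is explicitly called ``weaker''. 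More substantively, establishing $\asdim(\Gamma\times\cO M,d_1)\le\asdim(\Gamma\times\Z^m)$ as a standalone fact — which is what you need before you can apply Lemma~\ref{global to piecewise} — already requires a piecewise argument of the same kind you are trying to deduce; and you cannot repair it with Yamauchi here, since $\Gamma\times tM$ is unbounded and Theorem~\ref{Yamauchi} is stated for families of bounded spaces. The paper sidesteps all this by working \emph{directly} with $R$-balls in $(tM,d_\Gamma)$: they are isometric (for $t$ large, by asymptotic faithfulness) to $R$-balls in $(\Gamma\times tM,d_1)$, which the exponential-coordinate comparison makes uniformly bi-Lipschitz to $R$-balls in $\Gamma\times\Z^m$; since bounded subsets of $\Gamma\times\Z^m$ have $\asdim\le k$ by inherited-by-subsets, $\cO_\Gamma M$ has piecewise $\asdim\le k$, and then Yamauchi applies to the bounded levels $tM$.

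The second gap is in the lower bound. You invoke Lemma~\ref{piecewise to global} with the covering space $\Gamma\times tM$, but that lemma requires a single discrete proper space $X$ mapping onto the whole family, whereas here the covering space varies with $t$. Even setting that aside, for a fixed $t$ the space $\Gamma\times tM$ has $\asdim=\asdim\Gamma$ (the factor $tM$ is bounded), not $\asdim(\Gamma\times\Z^m)$, so concluding $k\le k'$ from $\asdim(\Gamma\times tM)\le k'$ is unjustified. The correct argument, as in the paper, is: if $\asdim\cO_\Gamma M<k$, then the family of balls in $\Gamma\times\Z^m$ — being uniformly quasi-isometric to balls appearing in the warped cone — would have $\asdim<k$, contradicting finite determination of asymptotic dimension (Proposition~\ref{determination A asdim}). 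That is a statement about the \emph{family of balls}, not about a single covering space.

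The ultrametric case in your proposal is essentially correct in spirit: $\asdim\cO Y=0$ for ultrametric $Y$, so $\asdim(\Gamma\times\cO Y,d_1)\le\asdim\Gamma+0=\asdim\Gamma$ by Proposition~\ref{asdim for the product}, and the lower bound comes from finite determination together with the isometric copies of finite subsets of $\Gamma$ inside $\cO_\Gamma Y$ (via Theorem~\ref{SW covering} and Lemma~\ref{3.1}). Here one does not need the dubious Morita-type equality, which is why your argument goes through — this is also exactly the paper's argument, via Corollary~\ref{weaker asdim than main asdim}.
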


Before proving this, let us state the following two propositions that gather results on piecewise properties of warped cones.

\begin{prop}\label{asdim for the product}
Let $\Gamma\acts Y$ be a Lipschitz action.
\begin{enumerate}
\item\label{1} $(\Gamma \times \cO Y,d_1)$ has property A if and only if both $\Gamma$ and $\cO Y$ have property A.
\item\label{2} $(\Gamma \times \cO Y,d_1)$ has finite asymptotic dimension if and only if both $\Gamma$ and $\cO Y$ have finite asymptotic dimension; in fact, 
\[\asdim\Gamma, \asdim \cO Y 
\leq \asdim (\Gamma \times \cO Y,d_1)\leq \asdim \Gamma + \asdim \cO Y.\]
\item\label{3} Assume that the action is isometric and $E$ is a Banach space such that $E\oplus E$ is isomorphic to a subspace of $E$. Then, clearly, $(\Gamma \times \cO Y,d_1)$ embeds coarsely into $E$ if and only if both $\Gamma$ and $\cO Y$ do.
\end{enumerate}
If the action is additionally free, and the equivalent conditions mentioned in \eqref{1}, \eqref{2}, or \eqref{3} hold, then the warped cone $\cO_\Gamma Y$ has, respectively, piecewise property A, piecewise finite asymptotic dimension, or piecewise embeds coarsely into $E$.
\end{prop}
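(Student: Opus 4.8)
The plan is to read the three claimed piecewise statements off Lemma~\ref{global to piecewise}, via the asymptotically faithful covering supplied by Theorem~\ref{SW covering}.

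First I would record the covering. Since the action $\Gamma \acts Y$ is assumed free, Theorem~\ref{SW covering} says that the family of quotient maps $\pi \colon (\Gamma \times tY, d_1) \to (tY, d_\Gamma)$, indexed by $t \in (0,\infty)$ with its usual order, is asymptotically faithful. In other words, $\mathcal X = (\Gamma \times tY, d_1)_{t>0}$ asymptotically faithfully covers the warped cone $\cO_\Gamma Y = (tY, d_\Gamma)_{t>0}$; this matches the setting of Definition~\ref{faithful definition} and Lemma~\ref{global to piecewise}, which are stated for an arbitrary linearly ordered index set (here $(0,\infty)$).

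Next I would invoke the equivalences already established in the first part of this proposition. Under the equivalent conditions of \eqref{1}, \eqref{2}, or \eqref{3}, the family $\mathcal X$ satisfies a suitable property $P$ uniformly: in case \eqref{1}, $P$ is ``property A''; in case \eqref{2}, using the right-hand inequality one may take $k = \asdim \Gamma + \asdim \cO Y < \infty$ and let $P$ be ``asymptotic dimension at most $k$''; in case \eqref{3}, $P$ is ``coarse embeddability into $E$''. Each of these properties is inherited by subsets --- the standard fact recalled before Lemma~\ref{global to piecewise}. Hence Lemma~\ref{global to piecewise} applies verbatim and yields that $\cO_\Gamma Y = (tY, d_\Gamma)_{t>0}$ has property $P$ piecewise: that is, piecewise property A in case \eqref{1}; piecewise asymptotic dimension at most $k$, hence piecewise finite asymptotic dimension, in case \eqref{2}; and piecewise coarse embeddability into $E$ in case \eqref{3}.

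There is essentially no obstacle left once the product estimates \eqref{1}--\eqref{3} are in hand: the passage to the warped cone is formal. The only points requiring care are that freeness of the action is exactly what makes the covering of Theorem~\ref{SW covering} asymptotically faithful, so it cannot be dropped here, and that the three properties are genuinely inherited by subsets, which is what lets Lemma~\ref{global to piecewise} fire. Both are available, so the argument is short.
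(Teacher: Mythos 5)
Your treatment of the final claim is correct and matches the paper's approach: for a free action, Theorem~\ref{SW covering} provides the asymptotically faithful covering $(\Gamma\times tY,d_1)_{t>0}\to (tY,d_\Gamma)_{t>0}$; under the hypotheses of (1), (2), or (3) the family $\mathcal X=(\Gamma\times tY,d_1)_{t>0}$ has the relevant uniform property $P$; since each such $P$ is inherited by subsets, Lemma~\ref{global to piecewise} gives the piecewise conclusion.

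However, you explicitly defer the three biconditionals (1)--(3) to ``the first part of this proposition,'' so your proposal only proves the last sentence of the statement, not the proposition itself, and the omitted part is where the real work lies. For the ``only if'' directions one needs coarse embeddings of $\Gamma$ and of $\cO Y$ into $(\Gamma\times\cO Y,d_1)$: the first is the isometric inclusion $\gamma\mapsto(\gamma,x)$, but the second requires the two-sided estimate $d_1((e,x),(e,x'))\leq d(x,x')\leq L^{d_1((e,x),(e,x'))}\cdot d_1((e,x),(e,x'))$, which is not immediate because for a merely Lipschitz action $d_1$ is not the $\ell_1$-product metric. The ``if'' part of (3) is direct (there the action is isometric, so $d_1$ is the product metric and one takes the direct sum of embeddings into $E\oplus E\subseteq E$), but the ``if'' part of (1) relies on an external permanence result for property~A under such twisted products, and the ``if'' part of (2) --- the inequality $\asdim(\Gamma\times\cO Y,d_1)\leq\asdim\Gamma+\asdim\cO Y$ --- is the heaviest step: the paper proves it by applying the Hurewicz-type theorem of Brodskiy--Dydak--Levin--Mitra (Theorem~\ref{BDLM}) to the projection $(\Gamma\times\cO Y,d_1)\to\Gamma$ and bounding the asymptotic dimension of preimages of asymptotically $0$-dimensional subsets of $\Gamma$. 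None of this appears in your proposal, so as a proof of the full statement it has a genuine gap.
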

\begin{proof} For the `only if' part of \eqref{1}, \eqref{2}, and \eqref{3} it suffices to observe that $\Gamma$ and $\cO Y$ embed coarsely into $(\Gamma \times \cO Y,d_1)$. For $\Gamma$, the map $\gamma\mapsto (\gamma, x)$ is an isometric embedding for any $t>0$ and $x\in tY$. For $\cO Y$, the family of maps (over $t>0$):
\[(tY, d) \owns x\mapsto (e,x) \in (\Gamma \times tY, d_1)\]
forms a coarse embedding by \cref{cone in product}.

The `if' part of assertion \eqref{1} is proved in \cite{cBCc} and the `if' part of assertion \eqref{3} is straightforward: it suffices to define coarse embeddings $f_t\colon \Gamma\times tY \to E$ as the sum of coarse embeddings $f_\Gamma \colon \Gamma \to E$ and $f_{tY} \colon  tY \to E$, namely 
\[f(\gamma, x) = f_\Gamma(\gamma)\oplus f_{tY}(x) \in E\oplus E\subseteq E\]
(recall that when the action is isometric, then $d_1$ is the product metric).

To verify the `if' part of assertion \eqref{2}, we will use the following Hurewicz-type theorem:
\begin{thm}[Brodskiy--Dydak--Levin--Mitra \cite{BDLM}]\label{BDLM} Let $f\colon X\to Z$ be a large-scale uniform map. Then $\asdim X \leq \asdim Z + \asdim f$, where 
\[\asdim f = \sup \{ \asdim f^{-1}(A) \st A\subseteq Z \text{ and } \asdim A = 0\}.\]
\end{thm}
Here, \emph{large-scale uniform} (or \emph{bornologous} in the terminology of J.~Roe \cite{Roe}) means that there exists a non-decreasing function $\rho_+$ such that
$d(f(x), f(x')) \leq \rho_+ \circ d(x,x')$. (Note that the theorem is formulated for a single map and we will apply it to a family of maps $f_t\colon X_t\to Z$, which is justified because we will show the assumptions to hold uniformly over $t>0$.)

For $t>0$ we define $f_t\colon (\Gamma\times t Y, d_1) \to \Gamma$ as the projection onto the first coordinate. Clearly, this is a $1$-Lipschitz map. Let $A \subseteq \Gamma$ have asymptotic dimension equal to $0$. That is, for every $R\in \N$ the diameters of $R$-components of $A$ are uniformly bounded by some constant $D$. We want to show that $\asdim ( f_t^{-1}(A) )_{t>0} \leq \asdim \cO Y \eqdef k$. By~definition we have $f_t^{-1}(A) = A\times t Y$, in particular for two different $R$-components $C, C'\subseteq A$ the distance of $f_t^{-1}(C)$ and $f_t^{-1}(C')$ is at least $R$. Consequently, in order to show that $f_t^{-1}(A)$ can be partitioned into $k+1$
sets whose $R$-components are uniformly bounded, it suffices to partition every $f_t^{-1}(C)$.

Hence, fix one $R$-component $C$ of $A$ and pick $\gamma\in C$. Using the isometric group action on $\Gamma\times \cO Y$ and the compatible isometric action on $\Gamma$ by right multiplication, we can assume $\gamma=e$. Let $U_0, \ldots U_k$ be a partition of $(t Y, d)$ whose $(L^{R+2D}(R+2D))$-components are $S$-bounded (note that $S$ can be chosen independently of~$t$). Define $W_0, \ldots W_k$ by $W_l = C\times U_l$.

Now if $d_1((\eta, x), (\delta, z)) < R$ for $\eta, \delta \in C$, then we have $d_1((e, x), (e, z)) < R + 2D$. By the latter part of the following double inequality from \cref{cone in product}
\begin{equation*}
d_1((e,ty), (e,ty')) \leq d(ty,ty') \leq L^{d_1((e,ty), (e,ty'))}\cdot d_1((e,ty), (e,ty')),
\end{equation*}
we conclude that $d(x,z) < L^{R+2D}(R+2D)$. Hence, by the former part of this inequality, we obtain that the projection onto $\{e\}\times t Y$ (given by $(\eta, x)\mapsto (e,x)$) of an $R$-component of $W_l$ is $S$-bounded. Consequently, any such component must be $(S+2D)$-bounded. This completes the verification of the assumptions of \cref{BDLM} and hence proves that $\asdim (\Gamma \times \cO Y,d_1) \leq \asdim \Gamma + \asdim \cO Y$.

The last part of the proposition is a direct application of Lemma \ref{global to piecewise} in the setting of \cref{SW covering}.
\end{proof}

In particular, for reasonable spaces $Y$ and free actions, we have the following equivalences.

\begin{prop}\label{nice group vs cone} Assume that $\cO Y$ has finite asymptotic dimension (which is the case if $Y$ is a manifold, finite simplicial complex, or a profinite completion) and the action $\Gamma\acts Y$ is free and Lipschitz. Then:
\begin{enumerate}
\item\label{next1} $\Gamma$ has property A if and only if $\cO_\Gamma Y$ has property A piecewise.
\item\label{next2} $\Gamma$ has finite asymptotic dimension if and only if $\cO_\Gamma Y$ has finite asymptotic dimension piecewise.
\item\label{next3} Assume that the action is isometric and let $E$ be a Banach space such that there exists an infinite dimensional space $B$ such that $E\oplus B$ is isomorphic to a subspace of $E$. Then, $\Gamma$ embeds coarsely into $E$ if and only if $\cO_\Gamma Y$ embeds coarsely into $E$ piecewise.
\end{enumerate}
\end{prop}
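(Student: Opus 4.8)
The plan is to prove the two implications separately; write $Y_t=(tY,d_\Gamma)$, so that $\cO_\Gamma Y=(Y_t)_{t>0}$, and observe that only the ``only if'' directions will use $\asdim\cO Y<\infty$.

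\emph{From $\Gamma$ to $\cO_\Gamma Y$.} Suppose $\Gamma$ has property $P$. Since $\cO Y$ has finite asymptotic dimension it also has property A, and it coarsely embeds into the auxiliary infinite-dimensional Banach space $B$ (transparent when $Y$ is a manifold or a finite complex, as $\cO Y$ then embeds uniformly bi-Lipschitzly into some $\R^N$; a short extra argument when $Y$ is a profinite completion, where $\asdim\cO Y=0$); so in all three cases $\cO Y$ has whichever of the stated properties is relevant. By Proposition \ref{asdim for the product} — in the embeddability case \eqref{next3}, with the obvious modification of its proof in which the sum of a coarse embedding $\Gamma\to E$ and a coarse embedding $\cO Y\to B$ is used, landing in $E\oplus B$, a subspace of $E$ — the family $(\Gamma\times tY,d_1)_{t>0}$ has property $P$ uniformly. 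As the action is free, Theorem \ref{SW covering} gives an asymptotically faithful covering $(\Gamma\times tY,d_1)\to Y_t$, and Lemma \ref{global to piecewise} (applicable because $P$ is inherited by subsets) yields that $\cO_\Gamma Y$ has property $P$ piecewise; this last step is exactly the final assertion of Proposition \ref{asdim for the product}.

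\emph{From $\cO_\Gamma Y$ to $\Gamma$.} Conversely, assume $\cO_\Gamma Y$ has property $P$ piecewise. Fix $y_0\in Y$. Because the action is free, $\gamma\mapsto(\gamma,ty_0)$ is an isometric embedding of $\Gamma$ into $(\Gamma\times tY,d_1)$, and by Theorem \ref{SW covering} the quotient map $\pi$ is, for each scale $\rho$ and all $t$ beyond some threshold $t_\rho$, a $\rho$-local isometry. Hence for $t\geq t_{2R}$ the set $\pi(\{(\gamma,ty_0):|\gamma|\leq R\})\subseteq Y_t$ has diameter at most $2R$ and is isometric to the ball $B_\Gamma(e,R)$ of $\Gamma$ (alternatively, this follows from Lemma \ref{3.1} and freeness). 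Let the numbers $j(R)$ witness piecewise property $P$ of $\cO_\Gamma Y$; since $P$ is inherited by subsets we may enlarge them so that $j(2R)\geq t_{2R}$. Then, running over $R\in\N$ and $t\geq j(2R)$, the sets $\pi(\{(\gamma,ty_0):|\gamma|\leq R\})$ form a subfamily of $\bigcup_{R}Y_{j(R)+}^{R}$, which therefore has property $P$; since every bounded subset of $\Gamma$ is, by left-invariance of the word metric, isometric to a subset of some $B_\Gamma(e,R)$, the metric family $\Gamma_{<\infty}$ has property $P$ as well. Finally, $\Gamma$ with the word metric is a proper discrete metric space and $P$ is finitely determined — Proposition \ref{determination A asdim} for property A and for asymptotic dimension at most $k$, and \cite{Ostrovskii finite} for coarse embeddability into $E$ — so $\Gamma$ has property $P$. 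Morally this mirrors Lemma \ref{piecewise to global}, except that the covering spaces $\Gamma\times tY$ vary with $t$, which forces the detour through a single orbit.

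\emph{Where the work is.} The argument is largely an assembly of Proposition \ref{asdim for the product}, Theorem \ref{SW covering}, Lemma \ref{global to piecewise}, and Proposition \ref{determination A asdim}, so the genuinely delicate points are two bits of bookkeeping. In the ``only if'' direction one must make sure that finite asymptotic dimension of $\cO Y$, together with the concrete nature of $Y$, really does produce a coarse embedding into $B$ — easy for manifolds and finite complexes, a small separate argument for profinite completions. In the ``if'' direction one has to interleave the asymptotic-faithfulness thresholds $t_\rho$ of Theorem \ref{SW covering}, which must be taken at scale $2R$ because an $R$-ball has diameter up to $2R$, with the thresholds $j(R)$ from the definition of piecewise $P$, so that the isometric copies of $B_\Gamma(e,R)$ genuinely lie inside the metric family that is assumed to have $P$; only then is finite determination applicable.
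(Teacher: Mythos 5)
Your proof is correct and takes essentially the same route as the paper: the ``only if'' direction passes through Proposition~\ref{asdim for the product}, Theorem~\ref{SW covering}, and Lemma~\ref{global to piecewise}, and the ``if'' direction realises balls of $\Gamma$ isometrically inside levels of the cone and then invokes finite determination. Your detour through the covering $\Gamma\times tY$ in the ``if'' direction is a cosmetic variant of the paper's direct appeal to Lemma~\ref{3.1} (they encode the same computation), so no genuine difference there.

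The one place where your argument falls short of the stated generality is the coarse embedding of $\cO Y$ into the auxiliary space $B$ in the ``only if'' direction of item~\eqref{next3}. You handle it by a case analysis on $Y$ being a manifold, a finite complex, or a profinite completion, but those are merely the parenthetical examples; the hypothesis of the Proposition is only that $\asdim \cO Y < \infty$. The paper's argument works from the hypothesis alone and needs no case analysis: finite asymptotic dimension of $\cO Y$ gives property~A, hence a coarse embedding into $\ell_2$; since each level $tY$ is compact, its image lies in a finite-dimensional Euclidean space, with dimension depending on $t$; and the family $(\R^i)_i$ embeds uniformly bi-Lipschitzly into any infinite-dimensional Banach space (by Dvoretzky), in particular into $B$. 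Replacing your case analysis by this chain closes the gap and makes the argument match the stated hypotheses; the rest of the proof can stay as you wrote it.
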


In fact, in the proof of the `if' parts of assertions \eqref{next1}, \eqref{next2}, and \eqref{next3} only one free orbit is used 
and the actions do not need to be isometric or Lipschitz. If $Y$ embeds bi-Lipschitz in $\R^n$, it suffices to assume $\dim B\geq n$.
\begin{proof}
The `only if' parts of \eqref{next1} and \eqref{next2} follow from Proposition \ref{asdim for the product} and the fact that finite asymptotic dimension (that we assume about $\cO Y$) implies property A.
Similarly for \eqref{next3}, finite asymptotic dimension implies coarse embeddability into a~Hilbert space \cite{A} and since levels of the infinite cone are compact spaces, we can embed them into finite-dimensional Hilbert spaces as well (with the dimension depending on the level!). As the family of Euclidean spaces $(\R^i)_i$ embeds bi-Lipschitz into any infinite dimensional Banach space $B$, we get that $\cO Y$ embeds coarsely into $B$. Thus whenever $\Gamma$ embeds coarsely into $E$, the product $\Gamma\times \cO Y$ embeds coarsely into $E\supseteq E \oplus B$, and finally $\cO_\Gamma Y$ piecewise embeds coarsely into $E$ by \cref{global to piecewise}.

For the `if' part recall from Lemma \ref{3.1} that for $y_0\in Y$ belonging to a free orbit, we have $\lim d(ty_0, \gamma ty_0) = |\gamma|$ and in fact the function $t\mapsto d(ty_0, \gamma ty_0)$ is eventually constant. Hence, every finite subset $A$ of $\Gamma$ can be isometrically realised as a subset $Aty_0\subseteq tY$ for sufficiently large $t$. By further increasing $t$ we may assume that $t\geq j({\diam A})$, where $j({\diam A})$ is the appropriate constant from the definition of piecewise property A, piecewise asymptotic dimension, or piecewise coarse embeddability into $E$. Since the considered properties are finitely determined, this ends the proof.
\end{proof}

By combining Theorem \ref{Yamauchi} with Proposition \ref{asdim for the product} and Theorem \ref{SW covering}, one obtains:

\begin{cor}\label{weaker asdim than main asdim} If the action $\Gamma\acts Y$ is Lipschitz and free and $\asdim \cO_\Gamma Y < \infty$, then: 
\[\asdim\Gamma, \asdim \cO Y \leq \asdim (\Gamma \times \cO Y,d_1) = \asdim \cO_\Gamma Y \leq \asdim \Gamma + \asdim \cO Y.\]
\end{cor}
\begin{proof}
Since finite subsets of $\Gamma$ embed isometrically into $\cO_\Gamma Y$ (as just recalled in the proof of \cref{nice group vs cone}), finite determination of the asymptotic dimension (Proposition \ref{determination A asdim}) gives us that $\asdim \Gamma \leq \asdim \cO_\Gamma Y$. By asymptotic faithfulness (\cref{SW covering}), \cref{Yamauchi} gives $\asdim (\Gamma \times \cO Y,d_1) \geq \asdim \cO_\Gamma Y \eqdef k$. 

We will show the opposite inequality $\asdim (\Gamma\times \cO Y,d_1) \leq k$. Fix $R\in \N$. For every $t>0$ we will construct a covering $(W_{t,l})_{l=0}^k$ of $(\Gamma\times tY, d_1)$ with $R$-components uniformly bounded. Let $(U_{t,l})_{l=0}^k$ be a covering of $(tY,d_\Gamma)$ such that diameters of $R$-components of $U_{t,l}$ are bounded by some $S\in \N$ (not depending on $t>0$). By asymptotic faithfulness, there is $t_0>0$ such that for $t > t_0$ the map $\pi$ is isometric, in particular injective, on balls of radius $\max(S,R)$. For such $t$, if we take any $R$-component $C$ of some $U_{t,l}$, its inverse image $\pi^{-1}(C)$ is a disjoint union of isometric copies of $C$. By the above injectivity, two points from different copies must lie at distance at least $R$, and, by contractivity of $\pi$, two points from inverse images of different components $C,C'$ of $U_{t,l}$ are at least $R$-apart as well. Hence, $R$-components of $W_{t,l} \defeq \pi^{-1}(U_{t,l})$ are isometric to $R$-components of $U_{t,l}$, hence $S$-bounded. For $t\leq t_0$, we can consider a covering $(V_l)_l$ of $\Gamma$ with $0\leq l \leq \asdim \Gamma \leq \asdim \cO_\Gamma Y$ such that $R$-components of $V_l$ are $S'$-bounded, and define $W_{t,l} = V_l\times tY$ for $l\leq \asdim \Gamma$ and $W_{t,l} = \emptyset$ otherwise; the diameter of $R$-components of $W_{t,l}$ is clearly bounded by $S'+ t_0 \diam Y$.

So far, the Lipschitzness assumption has not been used. The remaining inequalities were proved in \cref{asdim for the product} utilising this extra assumption.
\end{proof}

Note that using Theorem \ref{BDLM} one can prove that the unified warped cone $\cO^\mathrm{u}_\Gamma Y = \bigcup_{t>0} (tY, d_\Gamma)$ satisfies $\asdim \cO^\mathrm{u}_\Gamma Y \leq \asdim \cO_\Gamma Y + 1$. In fact, a version of \cref{weaker asdim than main asdim} in the `unified' case would also be true, but in the proof (of the inequality $\asdim \Gamma\times \cO^\mathrm{u} Y\leq k$ for $k \defeq \asdim \cO^\mathrm{u}_\Gamma Y$) an argument similar to that in the proof of the finite union theorem \cite{Bell--Dranishnikov} would be needed in order to combine coverings of $\Gamma\times [0,t_0] \times Y$ and $\Gamma\times (t_0,\infty) \times Y$.

We will now proceed to the proof of Theorem \ref{main asdim}, which is a sharper version of the above Corollary \ref{weaker asdim than main asdim} valid for example for free isometric actions on manifolds.

\begin{proof}[Proof of Theorem \ref{main asdim}]
Note that it follows from our assumptions and  the proof of Theorem \ref{qi} that for every $R\in \N$ there is $t_0$ such that for $t>t_0$ any $R$-ball in $(tM, d_\Gamma)$ is quasi-isometric (with constants not depending on $R$) to an $R$-ball in $\Gamma \times \R^m$ (in fact one gets a Lipschitz equivalence and the Lipschitz constant can be taken arbitrarily close to $1$, but we do not need it here nor in Theorem \ref{qi}), and further to an $R$-ball in $\Gamma \times \Z^m$. Consequently, the warped cone has piecewise asymptotic dimension at most $\asdim \Gamma \times \Z^m$. Then, by the theorem of Yamauchi \cite{Yamauchi} (Theorem \ref{Yamauchi} above), we have $\asdim \cO_\Gamma M \leq \asdim \Gamma \times \Z^m$. But this inequality cannot be strict: this would imply that the asymptotic dimension of the family of balls in $\Gamma \times \Z^m$ is strictly smaller than the asymptotic dimension of $\Gamma \times \Z^m$, which is impossible as asymptotic dimension is finitely determined by Lemma \ref{determination A asdim}.

\smallskip
If $(tY,d)$ is an ultrametric space, then for any $R>0$ its $R$-components are $R$-bounded, hence $\asdim \cO Y = 0$ (cf.\ the proof of Theorem \ref{examples from Thiebout and Ana}). In this case, the result follows from \cref{weaker asdim than main asdim}.
\end{proof}

Let us finish this section with the following simple observation.

\begin{prop}\label{equality of topological and asymptotic dimensions} We have $\dimAN Y = \asdimAN\cO Y = \asdim \cO Y$. If the action $\Gamma\acts Y$ is free, then $\dim Y \leq \asdim \cO_\Gamma Y$ and if it is additionally Lipschitz, then the same holds for $\dimAN Y$.
\end{prop}

With a more hands-on approach one can probably drop the freeness assumption.

\begin{proof}[Proof of \cref{equality of topological and asymptotic dimensions}] The estimates $\dimAN Y \geq \asdimAN \cO Y \geq \asdim \cO Y$ follow directly from definitions. For the proof of the opposite inequalities, we can assume $\asdim \cO Y <\infty$ and take $(V_{t,l})_{l=0}^{\asdim \cO Y}$ to be
a covering of $(tY,d)$ with $1$-components of $V_{t,l}$ bounded by some constant $S$. If we identify $tY$ with $Y$, thus scaling the metric by $t^{-1}$, set $V_{t,l}$ becomes a set whose $1/t$-components are $S/t$-bounded, hence establishing $\dimAN Y \leq \asdim \cO Y$, and consequently the equality
$\dimAN Y = \asdimAN\cO Y = \asdim \cO Y$.

Under the freeness assumption, we have $\asdim \cO_\Gamma Y = \asdim (\Gamma \times \cO Y, d_1) \eqdef k$ (whenever the former is finite) by \cref{weaker asdim than main asdim} (this part of the corollary does not use Lipschitzness). Let $(U_{t,l})_{l=0}^{k}$ be an \emph{open} covering of $(\Gamma \times tY, d_1)$ such that $1$-components of $U_{t,l}$ are bounded by some $S\in \N$ and denote the restriction of this covering to $\{e\} \times tY$ by 
$(V_{t,l})_{l=0}^{k}$. By the definition of $d_1$ one always has $d_1\leq d$ (where we identify $\{e\} \times tY$ and $tY$), so $1$-components of $V_{t,l}$ with respect to $d$ are contained in $1$-components taken with respect to $d_1$.

We need the following fact about the metric $d_1$, which is an ingredient of the proof \cite{SW} of \cref{SW covering}: for any $m\in \N$ and $\eps>0$, there exists $t_0=t_0(m, \eps)>0$ such that for $t \geq t_0$ the $d_1$-ball about $(e,ty)$ of radius $m$ is contained in the product of the ball of radius $m$ about $e \in \Gamma$ and the ball of radius $\eps$ about $y\in Y$ with respect to the metric~$d$. Here, we identified $\Gamma\times tY$ with the product of $\Gamma$ and~$Y$. Let us denote the image of $V_{t,l}$ under such identification by $W_{t,l}$. We readily see that for every $\eps>0$ there is $t_0=t_0(S, \eps)$ sufficiently large such that $1/t_0$-components of $W_{t_0,l}$ (with respect to the metric $d$ on $Y$) are $\eps$-bounded, which shows that $\dim Y \leq k$ (as $1/t_0$-components of an open set are open).

If the action is also Lipschitz, then by \cref{cone in product} the infinite cone $\cO Y$ embeds coarsely into $(\Gamma\times \cO Y, d_1)$ and hence \[\dimAN Y = \asdim \cO Y\leq \asdim \Gamma\times \cO Y \leq \asdim \cO_\Gamma Y,\]
where the equality has already been proved, and the latter inequality follows from \cref{weaker asdim than main asdim}.
\end{proof}

\section{\texorpdfstring{Applications to dynamics and C$^*$-dynamics}{Applications to dynamics and C-star dynamics}}\label{section nuclear}

The following was obtained by Jianchao Wu and Joachim Zacharias; we use notation from dimension theory, where $\dim^{+1}(\cdot)$ means $\dim(\cdot)+1$.

\begin{theorem}[Wu--Zacharias, unpublished]\label{WZ} For a free Lipschitz action $\Gamma\acts Y$, we have the following relations between its equivariant asymptotic dimension, denoted $\eqasdim (\Gamma\acts Y)$, and the asymptotic dimension $\asdim \cO_\Gamma Y$ of the warped cone:
\begin{enumerate}
\item\label{bound asdim by eqasdim} $\asdim^{+1} (\cO_\Gamma Y) \leq \asdim^{+1} (\cO Y) \cdot \eqasdim^{+1}(\Gamma\acts Y)$;
\item\label{bound eqasdim by asdim} $\eqasdim(\Gamma\acts Y) \leq \asdim (\cO_\Gamma Y)$.
\end{enumerate}

In particular, if $\asdim \cO Y < \infty$, then 
\[\asdim \cO_\Gamma Y < \infty \iff \eqasdim (\Gamma\acts Y) < \infty.\]
\end{theorem}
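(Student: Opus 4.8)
The plan is to route both inequalities through the asymptotically faithful product covering $\pi_t\colon(\Gamma\times tY,d_1)\to(tY,d_\Gamma)$ of Theorem \ref{SW covering}: for large $t$ a ball of a fixed radius $R$ in $(tY,d_\Gamma)$ is isometric to an $R$-ball in $(\Gamma\times tY,d_1)$, and the latter is, by Lemmas \ref{2.1} and \ref{3.1}, an $L$-distorted product of a ball in $\Gamma$ with a ball in $(tY,d)$ (where $L$ is the Lipschitz constant of the generators). Both estimates then amount to comparing three kinds of coverings: colored covers of the warped-cone levels witnessing their asymptotic dimension (Definition \ref{asdimAN}); $\Gamma$-invariant, $\Gamma$-finite colored covers of $\Gamma\times tY$; and equivariant colored covers of $\Gamma\times Y$ witnessing $\eqasdim(\Gamma\acts Y)$ in its standard reformulation, namely that for every finite $F\subseteq\Gamma$ there is a $\Gamma$-invariant, $\Gamma$-finite open cover of $\Gamma\times Y$ of multiplicity at most $\eqasdim(\Gamma\acts Y)+1$, with uniformly $\Gamma$-bounded members, that splits into $\eqasdim(\Gamma\acts Y)+1$ subfamilies each of which is $F$-disjoint.

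For assertion \eqref{bound eqasdim by asdim}, write $n=\asdim\cO_\Gamma Y$, fix $R\in\N$, and choose $t$ so large that $\pi_t$ is an $R$-local isometry. Take a cover of $(tY,d_\Gamma)$ by $n+1$ sets whose $R$-components have diameter at most $S(R)$. Their preimages under $\pi_t$ are $\Gamma$-invariant (because $\pi_t$ is $\Gamma$-invariant), their $R$-components have $d_1$-diameter at most $S(R)$ (local isometry), and $\Gamma$ permutes these $R$-components into finitely many orbits, since the compact $(tY,d_\Gamma)$ has only finitely many $R$-components in each member. Splitting the $n+1$ preimages into $R$-components thus yields a $\Gamma$-invariant, $\Gamma$-finite open cover of $\Gamma\times tY\cong\Gamma\times Y$ of multiplicity at most $n+1$ that decomposes into $n+1$ subfamilies, each consisting of sets pairwise at $d_1$-distance at least $R$; the defining inequality $d_1((\gamma,z),(\eta,z))\le|\gamma\eta^{-1}|$ upgrades this to $B_\Gamma(R)$-disjointness. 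Since the balls $B_\Gamma(R)$ exhaust $\Gamma$ as $R\to\infty$, this shows $\eqasdim(\Gamma\acts Y)\le n$.

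For assertion \eqref{bound asdim by eqasdim} the plan is a product-of-colored-covers argument. Put $d=\eqasdim(\Gamma\acts Y)$, $k=\asdim\cO Y$, and fix a scale $R$. Near any point an $R$-ball of $(tY,d_\Gamma)$ is the image under the action map of a product $B_\Gamma({\approx}R)\times B_{(tY,d)}({\approx}R)$, so one wants to combine a cover witnessing $\eqasdim(\Gamma\acts Y)\le d$ in the group direction with a cover witnessing $\asdim\cO Y\le k$ in the $tY$ direction. Concretely: from $\eqasdim(\Gamma\acts Y)\le d$ take, for a suitably large finite $F=F(R)\subseteq\Gamma$, a $\Gamma$-invariant, $\Gamma$-finite cover of $\Gamma\times Y$ with $d+1$ colors, each $F$-disjoint and with members of uniformly bounded $\Gamma$-extent; from $\asdim\cO Y\le k$ take, for a scale $R'$ blown up by the Lipschitz distortion of the action along $F$ so as to absorb the factor from Lemma \ref{2.1}, a cover of each $(tY,d)$ by $k+1$ families of uniformly bounded, $R'$-separated sets. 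Transporting both covers to $\Gamma\times tY$ — where, for $t$ large, $R$-balls genuinely split as distorted products — and forming the images under $\pi_t$ of the pairwise products produces a cover of $(tY,d_\Gamma)$ with $(d+1)(k+1)$ colors: each color is a family of uniformly bounded sets that is $R$-separated, the group cover controlling the group direction at scale $R$, the $\cO Y$ cover the $tY$ direction at the blown-up scale $R'$, and $\pi_t$ being injective at scale $R$ once $t$ is large. Hence $\asdim^{+1}(tY,d_\Gamma)\le(d+1)(k+1)$ with constants independent of $t$, which is assertion \eqref{bound asdim by eqasdim}. The ``in particular'' clause is then immediate: assertion \eqref{bound eqasdim by asdim} gives $\eqasdim(\Gamma\acts Y)<\infty$ whenever $\asdim\cO_\Gamma Y<\infty$, and assertion \eqref{bound asdim by eqasdim} gives the converse when $\asdim\cO Y<\infty$.

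The main obstacle is the bookkeeping in assertion \eqref{bound asdim by eqasdim}: since $d_\Gamma$ is neither a product metric nor the metric of a fibration, the product cover has to be assembled upstairs on $\Gamma\times tY$, where the action is only Lipschitz so that balls are merely \emph{distorted} products, and one must verify that the distortion (governed by Lemma \ref{2.1}, and harmless because at a fixed scale $R$ it only contributes the fixed factor $L^{R}$) together with the pushforward $\pi_t$ (injective on bounded sets for $t$ large) preserve both the uniform boundedness of members and the separation within each color class \emph{simultaneously}, including for $R$-components of arbitrarily large size. Carrying this out carefully, with the parameters $F(R)$, $R'$, and the threshold for $t$ chosen in the right order, is the technical heart of the proof.
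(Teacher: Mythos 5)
The paper does not actually contain a proof of Theorem~\ref{WZ}: it is stated and used as an unpublished result of Wu and Zacharias, and the text explicitly declines even to recall the definition of $\eqasdim$, deferring to the cited references. There is therefore no argument in the paper against which to compare your approach, and I can only assess it on its own terms.

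Your strategy — routing both inequalities through the asymptotically faithful covering $\pi_t\colon(\Gamma\times tY,d_1)\to(tY,d_\Gamma)$ of Theorem~\ref{SW covering} — is the natural one, and the multiplicative shape $(k+1)(d+1)$ of assertion~\eqref{bound asdim by eqasdim} does call for a product-of-covers argument. For assertion~\eqref{bound eqasdim by asdim} the idea is right, but you mis-state the scale at which faithfulness is needed: an $R$-local isometry is not sufficient to conclude that $R$-components of $\pi_t^{-1}(U_i)$ have $d_1$-diameter at most $S(R)$, since those components can have diameter up to $S(R)\gg R$; you need $\pi_t$ to be (say) an $(S(R)+R)$-local isometry so that an inductive argument confines a lifted $R$-chain to a single $S(R)$-ball about its starting point. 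This is easy to fix, but the threshold you write is wrong. For assertion~\eqref{bound asdim by eqasdim}, what you offer is a sketch, and it contains a genuine gap. The deck group of the covering $\pi_t$ acts on $\Gamma\times tY$ by $\eta.(\gamma,ty)=(\gamma\eta,\eta^{-1}(ty))$, mixing both coordinates. The eq-cover $\{W_\alpha\}$ does transfer to a deck-invariant family, but the sets $\Gamma\times V_j$ obtained by naively pulling back the $\asdim\cO Y$-cover of $(tY,d)$ are \emph{not} deck-invariant (their translates are $\Gamma\times\eta^{-1}V_j$). Consequently the ``pairwise products'' $W_\alpha\cap(\Gamma\times V_j)$ do not form a $\Gamma$-equivariant family, so their images under $\pi_t$ need not be a cover of $(tY,d_\Gamma)$ with controlled multiplicity or separation: the fibres of $\pi_t$ are infinite, and one fibre can meet unboundedly many of these sets. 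Replacing $\Gamma\times V_j$ by the genuinely invariant $\pi_t^{-1}(V_j)$ instead destroys the boundedness. Resolving this (for instance by working in a fundamental-domain-like slab, saturating afterwards, and tracking the $L^R$ distortion between $d_1$ and the naive product metric as in~\eqref{d1 estimate}) is, as you yourself acknowledge, the technical heart, and until it is carried out assertion~\eqref{bound asdim by eqasdim}, and with it the ``in particular'' clause, is not established.
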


In general, equivariant asymptotic dimension is computed relative to a family $\cF$ of subgroups of $\Gamma$ and above (and in the sequel) we consider simply the family $\{\{e\}\}$, the singleton of the trivial subgroup $\{e\}$. We refer the interested reader to \cite{eqasdim} for details, see also \cites{DAD,SWZ}.
We remark that when $\cF$ consists only of the trivial subgroup $\{e\}$ as above, equivariant asymptotic dimension is also called amenability dimension by some authors \cite{SWZ}. Equivariant asymptotic dimension was introduced as a technical condition (without an explicit definition, hence the variety of names in the literature) in the proof of the Farrell--Jones conjecture for hyperbolic groups by Bartels, L\"uck, and Reich \cite{BLR}.

\begin{defi} Let $k\in \N$ and $Y$ be a compact Hausdorff space. A free action $\Gamma \acts Y$ has \emph{equivariant asymptotic dimension} (also known as \emph{amenability dimension}) at most $k$ if for every $R < \infty$ there exists an open covering $\U$ of $\Gamma\times Y$ such that
\begin{enumerate}
\item for every $U\in \U$ and $\gamma\in \Gamma \setminus \{e\}$ we have $\gamma U\cap U = \emptyset$ and $\gamma U\in \U$ (where we use the diagonal $\Gamma$-action on $\Gamma\times Y$);
\item for every $(\gamma,y)\in \Gamma\times Y$ there are at most $k+1$ sets $U\in \U$ containing $(\gamma,y)$;
\item for every $(\gamma,y)\in \Gamma\times Y$ there exists $U\in \U$ containing $B(\gamma, R) \times \{y\}$.
\end{enumerate}
The minimal such $k$ is called the equivariant asymptotic dimension of the action $\Gamma \acts Y$ (if there is no $k$ as above, we say that the dimension is infinite).
\end{defi}

Equivariant asymptotic dimension relates to many other properties like dynamic asymptotic dimension \cite{DAD} and tower dimension \cite{Kerr} of group actions as well as Rokhlin dimension of C$^*$-dynamical systems and nuclear dimension of C$^*$-algebras \cite{SWZ}. For example, the following estimate follows from \cite{DAD}*{Section 8} (cf.\ \cite{SWZ}*{Theorem B} and \cite{Kerr}*{Section 6}).

\begin{thm}[Guentner--Willett--Yu \cite{DAD}] For a free action $\Gamma\acts Y$ on a compact Hausdorff space $Y$ we have:
\[\dim_\textnormal{nuc}^{+1} \left(\operatorname C(Y)\rtimes_\textnormal r \Gamma\right) \leq \eqasdim^{+1}(\Gamma\acts Y) \cdot \dim^{+1} (Y). \]
\end{thm}

Let us come back to the equivariant asymptotic dimension itself. Recall the following result.

\begin{thm}[Szabó--Wu--Zacharias \cite{SWZ}]\label{SWZ} Let $\Gamma\acts Y$ be a free action of a nilpotent group $\Gamma$ on a metrisable space $Y$. Then:
\[\eqasdim^{+1} (\Gamma\acts Y) \leq 3^{\asdim \Gamma}\cdot \dim^{+1} Y,\]
where $\dim Y$ is the Lebesgue covering dimension of $Y$.
\end{thm}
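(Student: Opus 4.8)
This is the Szab\'o--Wu--Zacharias theorem, recalled here from \cite{SWZ}; what follows is a reconstruction of the strategy one would use. If $\dim Y=\infty$ there is nothing to prove, so I would assume $m\defeq\dim Y<\infty$ and (reducing to this case) that $Y$ is compact metrisable. A standard reduction then lets me assume $\Gamma$ is torsion-free: one passes to a finite-index torsion-free subgroup $\Gamma'$ (a finite-index subgroup of a finitely generated nilpotent group has the same Hirsch length, and $\asdim$ is a quasi-isometry invariant, so neither side moves), and for a torsion-free finitely generated nilpotent group $\asdim\Gamma$ equals the Hirsch length $h(\Gamma)$. The plan is then to prove, by induction on $h$, that every free action of a torsion-free finitely generated nilpotent group of Hirsch length $h$ on a compact metrisable $Y$ with $\dim Y\le m$ satisfies $\eqasdim^{+1}(\Gamma\acts Y)\le 3^{h}(m+1)$.

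The base case $h=0$ is the trivial group, where the action is irrelevant and $\eqasdim(\{e\}\acts Y)=\dim Y$: a finite open cover of multiplicity $m+1$ and small mesh witnesses the definition, whose group side is vacuous, so $\eqasdim^{+1}=m+1=3^{0}(m+1)$.

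For the inductive step I would fix a central $\Z\le\Gamma$ with $\Lambda\defeq\Gamma/\Z$ torsion-free nilpotent of Hirsch length $h-1$, fix a finite symmetric $F\subseteq\Gamma$, and aim to cover $Y$, $\Gamma$-equivariantly, by $3^{h}(m+1)$ ``$F$-invariant'' open families with uniformly $\Gamma$-bounded pieces. Restricting the action to the central $\Z$, freeness together with $\dim Y<\infty$ should provide, at a scale $N$ chosen large relative to $F$, a topological Rokhlin-type decomposition of $Y$ along this $\Z$-direction --- open ``towers'' combinatorially modelled on a cover of a length-$N$ interval in $\Z$ --- with multiplicity $3$ (three colours of intervals). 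Collapsing the towers then yields a compact metrisable ``transverse'' space of covering dimension $\le m$ carrying a free $\Lambda$-action, and the inductive hypothesis covers it with $3^{h-1}(m+1)$ colours; pulling this back through the tower structure and intersecting with the three tower-colourings gives a cover by sets ``(tower piece)$\,\cap\,$(transverse piece)'' of multiplicity $3\cdot 3^{h-1}(m+1)=3^{h}(m+1)$, after which one checks $F$-invariance and the uniform bound.

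I expect the inductive step to be the crux, for two reasons. First, $\Lambda$ does not act on $Y$, so the transverse reduction is not formal: one must actually build, from the $\Z$-Rokhlin towers, a finite-dimensional compact metrisable space with a free $\Lambda$-action, run the induction there, and carry a cover back up through the tower combinatorics while keeping overlaps bounded --- this is the technical heart of \cite{SWZ}. Second, obtaining open Rokhlin towers of bounded multiplicity for the free $\Z$-action genuinely requires $\dim Y<\infty$ (a Lindenstrauss-type marker argument) and is what forces the factor $3$ at each layer, hence the overall $3^{\asdim\Gamma}$; sharpening that step would sharpen the theorem.
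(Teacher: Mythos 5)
The paper does not prove this statement; it is a result of Szab\'o, Wu, and Zacharias imported as a black box (the paper's only remark is that \cite{SWZ}*{Theorem 7.4 with Lemma 8.4} is phrased with the Hirsch length $h(\Gamma)$ in place of $\asdim\Gamma$, and that these coincide for finitely generated nilpotent groups by Dranishnikov--Smith). There is therefore no internal argument to measure your sketch against, and for the purposes of this paper a citation is all that is required.

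As a reconstruction of the SWZ argument your outline is in the right spirit: the $3^{h(\Gamma)}$ does come from iterating a multiplicity-$3$ topological Rokhlin-type decomposition for free $\Z$-actions along a normal chain in $\Gamma$ with infinite cyclic successive quotients, and the base case is exactly $\eqasdim(\{e\}\acts Y)=\dim Y$. But several things you label as routine reductions are in fact the substantive steps and, as written, have gaps. First, you assert that $\eqasdim(\Gamma\acts Y)$ does not change when you pass to a finite-index torsion-free subgroup $\Gamma'\leq\Gamma$; $\asdim\Gamma$ and the Hirsch length are indeed commensurability invariants, but the equality $\eqasdim(\Gamma\acts Y)=\eqasdim(\Gamma'\acts Y)$ is not automatic (the invariant has an action on $Y$ built in, not just the coarse type of $\Gamma$), and SWZ handle nilpotent groups directly rather than via such a reduction. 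Second, ``collapsing the towers'' to produce a compact metrisable transverse space of dimension $\leq m$ carrying a \emph{free} $\Lambda=\Gamma/\Z$-action is not a formal operation: the base of a Rokhlin tower for the central $\Z$-action is not $\Gamma$-invariant, and the induced object is not a genuine $\Lambda$-space on the nose; the SWZ inductive mechanism has to work harder than this quotient heuristic suggests. Third, to choose a central $\Z\leq\Gamma$ with $\Gamma/\Z$ torsion-free of Hirsch length $h-1$, one must take an isolated (Mal'cev) subgroup of the centre, not an arbitrary infinite cyclic central subgroup, or else $\Gamma/\Z$ can acquire torsion. None of this affects the paper, which only uses the statement, but if you intend the sketch as a proof these three points need to be supplied.
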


The original result \cite{SWZ}*{Theorem 7.4 with Lemma 8.4} is formulated and proved using the Hirsch length instead of the asymptotic dimension, but they are equal for virtually polycyclic groups (in particular for nilpotent groups) by a classical result of Dranishnikov and Smith \cite{DraSmith}.

The following more general result is a special case of \cite{coarse flow}*{Corollary 1.10}.

\begin{thm}[Bartels \cite{coarse flow}]\label{Bartels}
Let $\Gamma\acts Y$ be a free action of a \emph{virtually} nilpotent group~$\Gamma$ on a metrisable space $Y$ of finite covering dimension. Then:
\[\eqasdim (\Gamma\acts Y) < \infty.\]
\end{thm}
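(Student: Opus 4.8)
The plan is to deduce this from the nilpotent case already quoted as Theorem~\ref{SWZ} by a finite-index transfer, so that no dynamical input beyond Theorem~\ref{SWZ} is needed. The conclusion is vacuous when $\dim Y=\infty$, so I set $m\defeq\dim Y<\infty$ throughout.

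\emph{Step 1: the nilpotent case.} I would take a finite-index nilpotent subgroup $\Gamma_0\le\Gamma$ (which is again finitely generated), put $k\defeq[\Gamma:\Gamma_0]$, and observe that the restricted action $\Gamma_0\acts Y$ is still free while $Y$ is still metrisable of covering dimension $m$. Hence Theorem~\ref{SWZ} applies and yields $\eqasdim(\Gamma_0\acts Y)\le d\defeq 3^{\asdim\Gamma_0}\cdot(m+1)-1<\infty$, using that $\asdim\Gamma_0<\infty$ for finitely generated nilpotent groups.

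\emph{Step 2: transfer to $\Gamma$.} Here I would use the description of equivariant asymptotic dimension through equivariant maps: for a group $\Lambda$, $\eqasdim(\Lambda\acts Z)\le d$ (with respect to $\{\{e\}\}$) means that for every finite $F\subseteq\Lambda$ there is an $F$-controlled $\Lambda$-equivariant map from $Z\times\Lambda$ (with the diagonal action, which is free because $\Lambda$ acts on itself by left translations) to a simplicial complex of dimension $\le d$ carrying a free simplicial $\Lambda$-action. Starting from such a map $\varphi\colon Y\times\Gamma_0\to\Sigma$ for $\Gamma_0$ and fixing coset representatives $\gamma_1,\dots,\gamma_k$ of $\Gamma_0$ in $\Gamma$, I would form the iterated join $\Sigma^{*k}=\Sigma*\cdots*\Sigma$, which has dimension $\le k(d+1)-1$ and carries a free simplicial $\Gamma$-action permuting the $k$ factors according to the translation action of $\Gamma$ on $\Gamma/\Gamma_0$ and acting on the $j$-th factor through the corresponding conjugate of $\Gamma_0$. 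Identifying $Y\times\Gamma=\bigsqcup_{j}(Y\times\gamma_j\Gamma_0)$, with each summand identified $\Gamma_0$-equivariantly with $Y\times\Gamma_0$, the copies of $\varphi$ on the summands assemble, with barycentric weight $1/k$ on each factor, to a $\Gamma$-equivariant map $\Phi\colon Y\times\Gamma\to\Sigma^{*k}$. One then checks that $\Phi$ is $F'$-controlled for a prescribed finite $F'\subseteq\Gamma$ provided $F$ was chosen to contain the (finitely many) $\Gamma_0$-translation parts of the elements $\gamma_i^{-1}g\gamma_j$ with $g\in F'$. This gives $\eqasdim(\Gamma\acts Y)\le k(d+1)-1<\infty$.

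\emph{The main obstacle.} Neither the definition of $\eqasdim$ nor a permanence/transfer lemma for it appears in the excerpt, so Step~2 has to be carried out by hand, and the delicate point is the bookkeeping: translating the "$F$-controlled" condition between $\Gamma_0$ and $\Gamma$, and genuinely obtaining a free $\Gamma$-action on $\Sigma^{*k}$ (this is precisely why it is cleaner to map out of the free $\Gamma$-space $Y\times\Gamma$ rather than out of $Y$ itself, on which elements of $\Gamma$ would permute join-factors with equal barycentric weight and leave nontrivial finite stabilisers). An alternative would be to route through the warped cone via the inequality $\eqasdim(\Gamma\acts Y)\le\asdim\cO_\Gamma Y$ of Theorem~\ref{WZ}, reducing the statement to $\asdim\cO_\Gamma Y<\infty$; but I see no way to establish that finiteness for an arbitrary finite-dimensional compactum without re-proving the dynamical content of Theorem~\ref{SWZ}, so this reformulation does not actually shorten the argument, whereas the finite-index transfer uses Theorem~\ref{SWZ} as a black box and adds only elementary equivariant topology.
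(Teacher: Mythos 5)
The paper does not prove this statement at all: it is quoted directly from Bartels \cite{coarse flow}*{Corollary~1.10}, where it falls out of the general machinery of coarse flow spaces developed there. Your proposal — to derive the virtually nilpotent case from the nilpotent case (Theorem~\ref{SWZ}) by a finite‑index transfer — is therefore a genuinely different route, and if carried through it would be more self‑contained than the paper's citation, since it would use Theorem~\ref{SWZ} as the only dynamical input. Step~1 is fine: a finitely generated virtually nilpotent group has a finitely generated finite‑index nilpotent subgroup $\Gamma_0$, the restriction of a free action is free, and Theorem~\ref{SWZ} together with finiteness of $\asdim\Gamma_0$ gives a finite bound on $\eqasdim(\Gamma_0\acts Y)$.

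The substantive problem is Step~2, and it is more serious than the bookkeeping you flag. The join $\Sigma^{*k}$ with the coinduced $\Gamma$‑action is the standard tool for passing from a finite‑index subgroup (as in Bartels--L\"uck--Reich), but it does not carry a \emph{free} $\Gamma$‑action even when $\Gamma_0\acts\Sigma$ is free: elements of $\Gamma\setminus\Gamma_0$ can fix points in the central slice of the join whose $k$ coordinates are equal and whose barycentric weights are all $1/k$. Your map $\Phi$ is explicitly constructed to take weight $1/k$ on every factor, so its image lies in exactly this slice and hits the non‑free locus whenever the $k$ entries happen to agree. Passing from $Y$ to $Y\times\Gamma$ as the source does not fix this, because freeness is a property of the $\Gamma$‑action on the target $\Sigma^{*k}$, independent of where the map comes from. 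One would have to either arrange the weights so that the image avoids the diagonal stratum, or replace $\Sigma^{*k}$ by another model, and in either case verify the $F$‑control against the actual definition of $\eqasdim$ (which the paper deliberately omits, referring to \cite{eqasdim}). So the strategy is a reasonable alternative in outline, and $k(d+1)-1$ is the bound one should expect from a join argument, but the construction of the required equivariant map to a free finite‑dimensional $\Gamma$‑complex has a genuine gap that you identify but do not close. Since the paper's own treatment of this statement is only a citation, the honest answer is that no internal proof exists to compare against; your plan is a plausible but incomplete substitute.
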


Guentner, Willett, and Yu introduced another notion of dimension, called dynamic asymptotic dimension and denoted $\operatorname{dad}(\Gamma \acts Y)$, which is bounded above by the equivariant asymptotic dimension \cite{DAD}. Coverings $\U = \U(R)$ in its definition are coverings of $Y$ and it is required that for every $U\in \U$ the equivalence classes induced by identifying two points $y,y'\in U$  such that $y'=\gamma y$ for some $\gamma\in \Gamma$ with $|\gamma|\leq R$ have uniformly bounded cardinalities; see \cite{DAD} for details.

Willett was interested in improving the above upper bounds, which is a problem he formulated in particular during a seminar lecture in Warsaw in June 2017. One always has $\asdim \Gamma \leq \eqasdim (\Gamma \acts Y)$ \cite{DAD} and $\dim Y$ should probably also contribute to $\eqasdim (\Gamma \acts Y)$ (compare \cref{possibilities for dad}), so bounds involving $\asdim \Gamma$ and $\dim Y$ as in \cref{SWZ} are expected.

\begin{problem}[Willett] Improve these upper bounds on the equivariant asymptotic dimension.
\end{problem}

The next theorem, which is the main result of this section, addresses the problem, giving a common bound in the nilpotent and virtually nilpotent case. For a zero-dimensional space $Y$ and \emph{any} group, this bound becomes an equality.

\begin{thm}\label{corollary for eqasdim} Let $Y$ be a compact metrisable space of finite covering dimension. Let an action $\Gamma \acts Y$ be free and have finite equivariant asymptotic dimension (e.g.\ any free action of a virtually nilpotent group). Then:
\[\asdim \Gamma \leq \operatorname{dad}(\Gamma \acts Y) \leq \eqasdim (\Gamma\acts Y) \leq \asdim \Gamma + \dim Y.\]
\end{thm}

For 0-dimensional spaces in a very recent work \cite{Kerr} Kerr obtained the following equalities:
\[ \dim_\textnormal{tow}(\Gamma\acts Y) = \dim_\textnormal{ftow}(\Gamma\acts Y) = \operatorname{dad}(\Gamma\acts Y) = \eqasdim(\Gamma\acts Y),\]
where the first two notions of dimension, tower dimension and fine tower dimension, are introduced in \cite{Kerr}.
We conclude that in fact all these can acquire only two values: $\asdim \Gamma$ or infinity.

\begin{cor}\label{corollary from Kerr} Assume that $Y$ is a compact metrisable space of covering dimension zero and $\Gamma\acts Y$ is free. We have:
\[ \dim_\textnormal{tow}(\Gamma\acts Y) \!=\! \dim_\textnormal{ftow}(\Gamma\acts Y) \!=\! \operatorname{dad}(\Gamma\acts Y) \!=\! \eqasdim(\Gamma\acts Y) \in \{\asdim \Gamma, \infty\}.\]
\end{cor}

This sets up, in the zero-dimensional case, the following question of Willett, expressed in particular during his lectures in Southampton in March 2017 \cite{metric2011-southampton}.

\begin{question}[Willett]\label{possibilities for dad} Is it true that for any action $\Gamma \acts Y$ one has \[\operatorname{dad}(\Gamma\acts Y)\in \{\asdim \Gamma, \infty\}?\]
\end{question}

Note that the answer is also positive for actions of $\Z$, at least if $Y$ is infinite and $\Z\acts Y$ is minimal, in which case $\operatorname{dad}(\Z\acts Y) = 1$ by {Theorem 3.1} of \cite{DAD}, inspired by ideas of I.\ Putnam.
Let us additionally remark that by {Theorem 6.6} of \cite{DAD} every group~$\Gamma$ of finite asymptotic dimension admits a free action on a Cantor set with all the notions of dimension from \cref{corollary from Kerr} finite, namely, equal to $\asdim \Gamma$.

In order to prove \cref{corollary for eqasdim} we will need the following lemma.

\begin{lem}\label{change of metric} Let $\Gamma\acts Y$ be an action of a finitely generated group on a compact metrisable space $Y$. There exists a metric $d$ on $Y$ such that $\Gamma$ acts on $(Y,d)$ by Lipschitz homeomorphisms. Moreover, it can be arranged that $\dimAN (Y,d) = \dim Y$.
\end{lem}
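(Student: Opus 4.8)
The plan is to build a single metric $d$ by a tower-of-covers construction that simultaneously controls the Lipschitz constants of the generators and the Assouad--Nagata dimension, so that both assertions come from the same object. Fix any compatible metric $m$ on $Y$; it is used only to keep track of the topology. Since $\dim Y=n$ is a topological invariant, for every $\varepsilon>0$ the compact space $Y$ admits a finite open cover of $m$-mesh $<\varepsilon$ and multiplicity at most $n+1$. Combining this with uniform $m$-continuity of the finitely many generators $s\in S$, I would construct recursively a sequence of finite open covers $\mathcal{V}_0=\{Y\}\succ\mathcal{V}_1\succ\mathcal{V}_2\succ\cdots$ of $Y$ so that for every $k$: (i) $\mathcal{V}_{k+1}$ star-refines $\mathcal{V}_k$; (ii) $\mathcal{V}_{k+1}$ has multiplicity at most $n+1$ and $m$-mesh less than $2^{-k-1}$; and (iii) for every $s\in S$ the cover $s(\mathcal{V}_{k+1})$ refines $\mathcal{V}_k$. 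The three conditions do not conflict: a cover of sufficiently small $m$-mesh automatically star-refines $\mathcal{V}_k$ and satisfies $s(\mathcal{V}_{k+1})\prec\mathcal{V}_k$ for each of the finitely many $s$ (by uniform continuity), and one may then pass to a finite open refinement of multiplicity $\le n+1$, which only decreases the mesh. Since $\operatorname{mesh}_m(\mathcal{V}_k)\to0$, the stars $\operatorname{St}(x,\mathcal{V}_k)$ form a neighbourhood base at each point of $Y$.

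By Frink's metrization lemma the star-refining sequence $(\mathcal{V}_k)$ produces a compatible metric $d$ with, for all $x,y\in Y$ and $k\in\N$,
\[ d(x,y)\le 2^{-k-1}\ \Longrightarrow\ y\in\operatorname{St}(x,\mathcal{V}_k),\qquad\text{and}\qquad x,y\in V\in\mathcal{V}_k\ \Longrightarrow\ d(x,y)\le 2^{-k}. \]
Lipschitzness of the action is then immediate: if $d(x,y)\le 2^{-k-1}$ then $y\in\operatorname{St}(x,\mathcal{V}_k)$, which by (i) lies in a single member of $\mathcal{V}_{k-1}$ containing $x$ as well; applying $s$ and using (iii), $sx$ and $sy$ lie in a common member of $\mathcal{V}_{k-2}$, so $d(sx,sy)\le 2^{-k+2}=8\cdot2^{-k-1}$. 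As $\diam_d Y\le 1$ (the case $k=0$), this gives a Lipschitz constant for each $s\in S$ that is uniform over $S$, whence every $\gamma\in\Gamma$ acts by a bi-Lipschitz homeomorphism (its inverse is again a word in $S$); this is the first assertion. (The first assertion alone also follows from the elementary averaged metric $\sum_{\gamma\in\Gamma}b^{|\gamma|}m(\gamma x,\gamma y)$ with $b$ small, for which each generator is $b^{-1}$-Lipschitz.)

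For the dimension, the same data yield $\dimAN(Y,d)\le n$: by the Frink estimates each $\mathcal{V}_k$ has $d$-mesh $\le 2^{-k}$, Lebesgue number at least a fixed multiple of $2^{-k}$, and multiplicity $\le n+1$, so feeding $\mathcal{V}_k$ in at scale $R\asymp 2^{-k}$ verifies, with a constant uniform in $R$, the standard characterisation of Assouad--Nagata dimension by open covers of bounded multiplicity whose mesh is comparable to the Lebesgue number. The reverse inequality is the standard fact $\dim(Y,d)\le\dimAN(Y,d)$ together with $\dim(Y,d)=\dim Y=n$, giving $\dimAN(Y,d)=\dim Y$. The bulk of the work is bookkeeping in the recursive construction, and the point I expect to require the most care is verifying that ``multiplicity $\le n+1$'' and ``small mesh'' never interfere (refining is harmless) and that the constants delivered by Frink's lemma line up with those in the covering description of $\dimAN$. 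It is worth noting that equicontinuity of the action is never used: only the ``lag-one'' compatibility $s(\mathcal{V}_{k+1})\prec\mathcal{V}_k$ enters, which causes no problem precisely because a bi-Lipschitz homeomorphism need not be bi-Lipschitz with a constant bounded along the group.
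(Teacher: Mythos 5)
Your argument is correct, but it takes a genuinely different route from the paper's. The paper begins by invoking the theorem of Nagata and Assouad to obtain a compatible metric $d_0$ on $Y$ with $\dimAN(Y,d_0)=\dim Y$, and then post-processes $d_0$: a concave, increasing function $c$ with $c(0)=0$ is built by a recursion tuned to the moduli of continuity of the generators, so that $d=c\circ d_0$ is still a metric (concavity gives subadditivity), makes each $s\in S$ Lipschitz with constant $4$, and does not increase the Assouad--Nagata dimension, since the monotone rescaling sends $R$-components for $d$ to $c^{-1}(R)$-components for $d_0$ and the recursion keeps the distortion of scales under control. You instead build the metric from scratch via a Frink-type metrization over a tower of finite open covers $\mathcal V_k$ of multiplicity at most $n+1$, in which both the star-refinement chain and the action-compatibility $s(\mathcal V_{k+1})\prec\mathcal V_k$ for $s\in S$ are baked in from the start, and then both conclusions (Lipschitzness and $\dimAN\le n$) are read off from the same cover data. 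The trade-off is clear: the paper's argument is shorter because it offloads the dimension-theoretic content to a citation, while yours is more self-contained but essentially re-proves the relevant direction of Nagata--Assouad as it goes. The bookkeeping you flag as delicate does work out: small $m$-mesh gives star-refinement through the Lebesgue number of $\mathcal V_k$ and gives $s(\mathcal V_{k+1})\prec\mathcal V_k$ through uniform $m$-continuity of the finitely many $s\in S$; passing afterward to a finite refinement of multiplicity $\le n+1$ preserves both of these and only shrinks the mesh; symmetry of $S$ upgrades Lipschitz to bi-Lipschitz for each $\gamma$; and the Frink estimates provide, at each scale $2^{-k}$, a cover of $d$-mesh $\lesssim 2^{-k}$, $d$-Lebesgue number $\gtrsim 2^{-k}$, and multiplicity $\le n+1$, which is exactly the open-cover characterisation of $\dimAN(Y,d)\le n$ with a scale-independent constant.
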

\begin{proof} By results of Nagata \cites{Nagata} and Assouad \cite{Assouad} a compact metrisable space $Y$ has topological dimension at most $k$ if and only if it admits a metric $d_0$ such that $(Y,d_0)$ has Assouad--Nagata dimension at most $k$, hence we can take $d_0$ such that $\dimAN(Y,d_0)=\dim Y$.

Let $c_0$ be the maximum value acquired by $d_0$. We will construct a function $c\colon [0,c_0]\to [0,1]$ such that $d = c \circ d_0$ is the required metric. Let us inductively define an auxiliary sequence $c_n$ by the formula
\[c_{n+1} = \min \left\{c_n/3 ,\ \min\left\{ d_0(sy, sy') \st s\in S,\ d_0(y,y')\geq c_n\right\}\right\},\]
where the inner minimum is well defined and positive by compactness of $Y$. Define $c(c_n) = 2^{-n}$, $c(0)=0$ and extend it in an affine way on intervals $[c_{n+1}, c_n]$. Note that we have the following inequality of difference quotients:
\[0 < \frac{2^{-n} - 2^{-n-1}}{c_n-c_{n+1}} \leq \frac{2^{-n-1}}{2c_n/3} = \frac{2^{-n-2}}{c_{n}/3} \leq \frac{2^{-n-2}}{c_{n+1}} \leq \frac{2^{-n-1}-2^{-n-2}}{c_{n+1}-c_{n+2}},\]
and so $c$ is a concave and increasing function. Thus, the composition $d = c \circ d_0$ is still a metric, that is, it satisfies the triangle inequality (see e.g.\ \cite{coarse triviality}).

Observe that the action of every generator $s\in S$ is $4$-Lipschitz with respect to~$d$. Indeed, let $y\neq y' \in Y$, $s\in S$ and let $n$ be such that $d_0(sy, sy') \in [c_{n+1}, c_n]$. Then, because $s^{-1}\in S$, we have 
\[d_0(y,y') = d_0(s^{-1}sy, s^{-1}sy') \geq c_{n+2}\] and hence \[d(sy,sy') \leq 2^{-n} = 4 \cdot 2^{-n-2} \leq 4 d(y,y').\]

Our change of metric does not increase the Assouad--Nagata dimension (cf.\ \cite{coarse triviality} for similar reasonings for asymptotic Assouad--Nagata dimension). Indeed, it will follow from the observation that $R$-components of subsets of $Y$ with respect to~$d$ are the same as $c^{-1}(R)$-components with respect to~$d_0$, so fix $R\in(0,1]$. For the metric $d_0$ there exists $K<\infty$ not depending on $R$ such that there exist a covering $U_0, \cdots, U_{\dim Y}$ of $Y$ such that $c^{-1}(R)$-components of $U_i$ are $K\cdot c^{-1}(R)$-bounded. Since $c_{j+1}/c_j \leq 1/3$, if $n$ is such that $c^{-1}(R) \in [c_{n+1},c_n]$, then $K\cdot c^{-1}(R) \leq c_{n-\ceil{\log_3 K}}$ (formally, one needs to extend the sequence $(c_j)$ by putting $c_j = c_0 \cdot 3^{-j}$ for $j<0$; we also need to extend the map $c$, for example by putting $c(r)=1$ for $r\geq c_0$). Coming back to the metric $d$, we have $R = c(c^{-1}(R))\geq 2^{-n-1}$ and 
\[c(K\cdot c^{-1}(R)) \leq c(c_{n-\ceil{\log_3 K}}) \leq 2^{-n+\ceil{\log_3 K}},\] so $R$-components of $U_i$ are $(2^{\ceil{\log_3 K}+1}\cdot R)$-bounded.
\end{proof}

\begin{proof}[Proof of \cref{corollary for eqasdim}] 
Consider a metric $d$ from \cref{change of metric} such that $\dimAN (Y,d)$ equals $\dim Y$. 
If $\Gamma$ is virtually nilpotent, then indeed by \cref{Bartels} and the assumption that $\dim Y<\infty$, we know that $\eqasdim (\Gamma\acts Y)$ is finite. By \cref{equality of topological and asymptotic dimensions} we also have $\asdim \cO (Y,d) = \dimAN (Y,d) = \dim Y < \infty$. Hence, by Theorem \ref{WZ} item \eqref{bound asdim by eqasdim}, $\asdim(\cO_\Gamma Y)$ is finite as well. By Corollary \ref{weaker asdim than main asdim} it is at most $\asdim \Gamma + \asdim \cO Y = \asdim \Gamma + \dim Y$. By Theorem \ref{WZ} item \eqref{bound eqasdim by asdim}, we conclude that $\eqasdim (\Gamma\acts Y) \leq \asdim \Gamma + \dim Y$.

The remaining inequalities follow from \cite{DAD}. More precisely, we have:
\[\asdim \Gamma = \operatorname{dad}(\Gamma \acts \beta\Gamma) \leq \operatorname{dad}(\Gamma \acts Y) \leq \eqasdim (\Gamma\acts Y),\]
where the equality is a part of Theorem 6.5 of \cite{DAD}, the former inequality follows (see the proof of the same theorem) from the universal property of the Stone--\v{C}ech compactification $\beta \Gamma$ of $\Gamma$, and the latter inequality follows from Theorem 4.11 of the same work.
\end{proof}

\section*{Acknowledgements} The author is grateful to Jianchao Wu and Joachim Zacharias for permission to use their unpublished result, \cref{WZ}. The author thanks Kang Li for discussions on different notions of dimension and suggesting improvements in the exposition of \cref{section nuclear}. The author is indebted to Damian Orlef for inspiring discussions on diophantine approximations, in particular for suggesting a proof of \cref{higher tori}. Both authors of the appendix thank Piotr Nowak, whose question had initiated the collaboration that resulted in the appendix. They are grateful to the referee for carefully reading the manuscript and offering many helpful suggestions.

The author was partially supported by Narodowe Centrum Nauki grant Preludium number 2015/19/N/ST1/03606.

\appendix

\section{Approximating the badly approximable\\
by Dawid Kielak and Damian Sawicki}\label{appendix}

In the appendix we prove the following results.

\begin{thm}\label{main appendix} Let $\alpha \in (0,1)$ and let $\Z$ act on the circle $\T^1 = \R/\Z$ by shifts by~$\alpha$.
Then the warped cone $(t\T^1, d_\Z)_{t>0}$ is quasi-isometric to the collection of tori $(\T^2, \tau d)_{\tau > 0}$ if and only if $\alpha$ is a restricted irrational number.
\end{thm}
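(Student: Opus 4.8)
The plan is to reduce the statement to a question about \emph{families of flat tori} and then to settle it with Minkowski's theorems on successive minima, Mahler compactness, and the characterisation of restricted numbers (Definition~\ref{restricted definition}) by the condition $\inf_{n\ge1} n\,\|n\alpha\| > 0$, where $\|\cdot\|$ denotes distance to the nearest integer. The reduction goes as follows. By Theorem~\ref{SW covering} (or directly Lemma~\ref{2.1}, the action being isometric), $(t\T^1, d_\Z)$ is the metric quotient of $(\Z\times t\T^1, d_1)$, with $d_1$ the $\ell^1$-product metric, under $\pi(n, tx) = t(x+n\alpha)$; lifting the circle factor to its universal cover $\R$ exhibits $(t\T^1, d_\Z)$ as the quotient of $(\Z\times\R, \ell^1)$ by the translation action of $\Gamma_t = \langle (1,-t\alpha),(0,t)\rangle$. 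The inclusion $\Z\hookrightarrow\R$ is a $\Gamma_t$-equivariant isometric embedding $(\Z\times\R,\ell^1)\to(\R^2,\ell^1)$ with $\tfrac12$-dense image, so it descends, uniformly in $t$, to a rough isometry of $(t\T^1,d_\Z)$ onto the flat torus $\R^2/\Lambda_t$ (which is $\sqrt2$-bi-Lipschitz to the Euclidean flat torus of the same lattice), where $\Lambda_t = \langle(1,-t\alpha),(0,t)\rangle\subseteq\R^2$ has covolume $t$; and $(\T^2,\tau d)$ is bi-Lipschitz to $\R^2/(\tau\Z^2)$. Thus the theorem becomes: the family $(\R^2/\Lambda_t)_{t>0}$ is quasi-isometric to $(\R^2/\tau\Z^2)_{\tau>0}$ if and only if $\alpha$ is restricted. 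A generic vector of $\Lambda_t$ has squared Euclidean length $n^2+t^2(m-n\alpha)^2$, so $\lambda_1(\Lambda_t)^2 = \min\bigl(t^2,\ \min_{n\ge1}(n^2+t^2\|n\alpha\|^2)\bigr)$.

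For the ``if'' direction, assume $\alpha$ is restricted and fix $c>0$ with $\|n\alpha\|\ge c/n$ for all $n\ge1$. Then $n^2+t^2\|n\alpha\|^2\ge 2ct$ by the AM--GM inequality, so $\lambda_1(\Lambda_t)^2\ge 2ct$ for $t$ large, while Hermite's inequality gives $\lambda_1(\Lambda_t)^2\le (2/\sqrt3)\,t$. Hence the unimodular lattices $t^{-1/2}\Lambda_t$ have first minimum bounded away from $0$ and $\infty$ and, by Mahler compactness, lie in a fixed compact set of unimodular lattices; writing each such lattice as $g\Z^2$ with $\|g^{\pm1}\|$ uniformly bounded, the tori $\R^2/(t^{-1/2}\Lambda_t)$ are uniformly bi-Lipschitz to $\R^2/\Z^2$, so $\R^2/\Lambda_t$ is uniformly bi-Lipschitz to $\sqrt t\cdot\T^2$ for $t\ge t_0$; for $t\le t_0$ both families are uniformly bounded. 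Combined with the reduction, this yields $\cO_\Z\T^1\simeq\cO\T^2$, with indices matched by $\tau(t)=\sqrt t$.

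For the ``only if'' direction, suppose $\alpha$ is not restricted and choose $b_j\to\infty$ with $\eps_j:=b_j\|b_j\alpha\|\to0$; set $t_j:=b_j^2/\eps_j$. Taking $n=b_j$ (with $m$ the nearest integer to $b_j\alpha$) produces a vector of $\Lambda_{t_j}$ of squared length $b_j^2+t_j^2\|b_j\alpha\|^2=2b_j^2$, so $\lambda_1(\Lambda_{t_j})\le\sqrt2\,b_j$; since $\lambda_1\ge1$ and $\lambda_1\lambda_2\asymp\det=t_j$ (Minkowski's second theorem), $\lambda_2(\Lambda_{t_j})\gtrsim t_j/b_j=b_j/\eps_j$, and as $\lambda_1\ll\lambda_2$ the diameter of the torus (its covering radius) is $\asymp\lambda_2$. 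Now suppose for contradiction that $\cO_\Z\T^1\simeq\cO\T^2$; combined with the reduction, each $\R^2/\Lambda_{t_j}$ is $(C,A)$-quasi-isometric, with $C,A$ independent of $j$, to some $\tau_j\T^2$, and comparing diameters of these bounded spaces forces $\tau_j\asymp b_j/\eps_j$. Consider the scale $R_j:=\sqrt{t_j}=b_j/\sqrt{\eps_j}$, for which $\lambda_1(\Lambda_{t_j})\ll R_j\ll\min(\lambda_2(\Lambda_{t_j}),\tau_j)$, both ratios being $\gtrsim\eps_j^{-1/2}$. In $\tau_j\T^2$ a ball of radius $R_j$ has area $\asymp R_j^2=t_j$; in $\R^2/\Lambda_{t_j}$, because $\lambda_1\ll R_j\ll\lambda_2$, an $R_j$-ball is contained in the $R_j$-tubular neighbourhood of the length-$\lambda_1$ core geodesic and hence has area $\lesssim R_j\lambda_1\le\sqrt2\,b_j R_j=\sqrt2\,b_j^2/\sqrt{\eps_j}$. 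Since a $(C,A)$-quasi-isometry maps an $R$-ball into a $(CR+A)$-ball whose $A$-neighbourhood covers an $(R/C-2A)$-ball, a standard count with unit balls relates the two areas up to a factor depending only on $C,A$, giving $t_j\lesssim_{C,A} b_j^2/\sqrt{\eps_j}$; but $t_j=b_j^2/\eps_j$, so $\eps_j^{-1/2}\lesssim_{C,A}1$, which is absurd for $j$ large.

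The delicate point is the ``only if'' direction: one must establish, uniformly in the sense required by quasi-isometry of \emph{families}, that a ball of radius $R$ in a flat torus with $\lambda_1\ll R\ll\lambda_2$ behaves like a thin cylinder of area $\asymp R\lambda_1$ rather than like a Euclidean disk of area $\asymp R^2$, and that the index-matching inherent in quasi-isometry of families is pinned down by the diameters of the bounded pieces. The covering-space reduction, the lattice-geometry input (Hermite, Minkowski, Mahler), and the continued-fraction characterisation behind Definition~\ref{restricted definition} are all classical.
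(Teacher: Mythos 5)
Your proposal is correct in substance but takes a genuinely different route from the paper, and the comparison is instructive. The paper never passes to the flat-torus picture: instead it introduces the intermediate rational rotation by $p/q$ (Lemma~\ref{technical appendix}), shows the warped metrics for angles $\alpha$ and $p/q$ are bi-Lipschitz when $|q\alpha-p|<K/l$, and then writes down an explicit homomorphism $\iota(z)=(qz,r_1z,\ldots,r_mz)$ which realises the level as a product of circles; restrictedness enters through the continued-fraction recurrences~\eqref{recurrence q}--\eqref{approximation by convergents}, which supply, for every $t$, a factorisation $t=lq$ with $q\le l$ and $|q\alpha-p|\le (A+1)/l$. Your lift to $(\Z\times\R,\ell^1)/\Gamma_t\to\R^2/\Lambda_t$ replaces all of this by lattice geometry: the ``balanced'' versus ``unbalanced'' dichotomy of the paper's $(\T^1,ld)\times(\T^1,qd)$ is exactly your $\lambda_1(\Lambda_t)\asymp\sqrt t$ versus $\lambda_1\ll\lambda_2$, and Mahler compactness packages the uniformity that the paper gets from the $w\in[1,A+1)$ range. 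For the ``only if'' direction, the paper compares the separated-set invariant $v_N$ \emph{globally} on the unbalanced tori $(\T^2,q_{i+1}d\times q_id)$ against $(\T^2,r_id)$ after matching diameters, while you compare ball volumes at the intermediate scale $R_j=\sqrt{t_j}$; both are correct and essentially equivalent, and both require the diameter-matching step to pin down $\tau_j$. Your argument is cleaner and more conceptual for $m=1$; the paper's Lemma~\ref{technical appendix}, being formulated for $m$-tuples, is also what powers Proposition~\ref{higher tori}, which your reduction would need to be generalised to rank-$(m{+}1)$ lattices to recover.

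One genuine omission: you write ``suppose $\alpha$ is not restricted and choose $b_j\to\infty$ with $\eps_j:=b_j\|b_j\alpha\|\to0$.'' By Definition~\ref{restricted definition} a restricted number is a particular kind of irrational, so ``$\alpha$ not restricted'' includes all rational $\alpha\in(0,1)$; for rational $\alpha=p/q$ one has $\|q\alpha\|=0$, so $\eps_j$ can be taken to vanish and $t_j=b_j^2/\eps_j$ is undefined. The rational case must be handled separately (and trivially): the action factors through $\Z/q\Z$, so by Example~\ref{examples to non-rigidity}\,\eqref{divide by finite} the warped cone is quasi-isometric to a family of circles, and a diameter-versus-volume comparison (or asymptotic dimension) rules out quasi-isometry with a family of round tori. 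The paper treats this case explicitly. With that caveat added, your argument is sound; in particular the area estimate $\lesssim R\lambda_1$ for an $R$-ball in a flat torus with $\lambda_1\ll R\ll\lambda_2$, and the passage from areas to $N$-separated counts with $N$ fixed in terms of the quasi-isometry constants, both go through (the case $N>\lambda_1$ only makes the upper bound on the count smaller, which helps the contradiction).
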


By a restricted irrational number we mean an irrational number with a bounded sequence of integers as its continued fraction representation, see \cref{restricted definition} for details.

We also offer a result about actions of the infinite dihedral group
\[D_\infty = \Z/2\Z * \Z/2\Z = \{1,r\} * \{1,r'\}.\]

\begin{cor}\label{dihedral spheres} Let $D_\infty\acts \T^1$ be such that $r$ and $r'$ act by non-trivial reflections.
Then the warped cone $(t\T^1, d_{D_\infty})_{t>0}$ is quasi-isometric to the collection of spheres $(\SS^2, \tau d)_{\tau > 0}$ if and only if $r'r$ acts by a rotation by a restricted irrational number.
\end{cor}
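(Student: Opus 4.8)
The plan is to reduce the corollary to \cref{main appendix} by a \emph{folding} argument. Write $D_\infty=\langle r,r'\rangle$ and put $\rho=r'r$, so that $\Z=\langle\rho\rangle$ is the index-two normal translation subgroup and $D_\infty/\Z\cong\Z/2\Z$ is generated by the class of $r$. After a change of coordinate on $\T^1$ we may assume $r$ acts by $x\mapsto -x$ and $r'$ by $x\mapsto \alpha-x$, so that $\rho$ acts by rotation by $\alpha$. Since warped cones are insensitive to the choice of finite generating set up to quasi-isometry, I would equip $D_\infty$ with the generating set $\{r,r',\rho,\rho^{-1}\}$, which contains the generating set $\{\rho,\rho^{-1}\}$ of $\Z$. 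Two observations are then elementary: first, the reflection $r$ is an isometry of $(t\T^1,d_\Z)$, because it is an isometry of $(\T^1,d)$ and $r\rho r^{-1}=\rho^{-1}$, so it carries a $\rho^{\pm1}$-jump of cost $1$ to a $\rho^{\mp1}$-jump of cost $1$; second, by the definition of the warped metric and the fact that all constraints are of upper-bound type, $d_{D_\infty}$ on $t\T^1$ is the largest metric $\le d_\Z$ subject to the extra constraints $d(tz,r\cdot tz)\le 1$ and $d(tz,r'\cdot tz)\le 1$ (and $r'=\rho r$ has bounded length, so only the first of these matters up to a bounded error).

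Combining these two facts with the computation in the proof of item~\eqref{divide by finite} of \cref{examples to non-rigidity}, applied to the isometric action of the finite group $\langle r\rangle$ on the metric space $(t\T^1,d_\Z)$, I get that $d_{D_\infty}(tx,ty)$ agrees, up to a universal additive constant, with the quotient metric $\min\bigl(d_\Z(tx,ty),\,d_\Z(t\cdot rx,ty)\bigr)$ on $(t\T^1,d_\Z)/\langle r\rangle$ (here one also uses that $r$ is a $d_\Z$-isometry to see that this formula is genuinely a metric). Hence the family $\cO_{D_\infty}\T^1$ is uniformly quasi-isometric (indeed almost isometric) to the family of quotients $\bigl((t\T^1,d_\Z)/\langle r\rangle\bigr)_{t>0}$.

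For the ``if'' direction I would invoke \cref{main appendix} in an \emph{equivariant} form: when $\alpha$ is a restricted irrational number the quasi-isometry $\cO_\Z\T^1\simeq(\T^2,\tau d)_{\tau>0}$ can be chosen to intertwine the reflection $r\colon x\mapsto -x$ of $t\T^1$ with the elliptic involution $\iota\colon(u,v)\mapsto(-u,-v)$ of $\T^2=\R^2/\Z^2$; this requires opening up the continued-fraction construction behind \cref{main appendix} and checking that it is compatible with simultaneously reversing the $\T^1$-coordinate and the $\rho$-direction. Passing to quotients, $(t\T^1,d_\Z)/\langle r\rangle$ is then uniformly quasi-isometric to $(\T^2,\tau d)/\langle\iota\rangle$, the flat pillowcase orbifold $S^2(2,2,2,2)$ scaled by $\tau$; and since a flat cone point of angle $\pi<2\pi$ is bi-Lipschitz to a flat disc with an absolute constant, the pillowcase is bi-Lipschitz to the round $\SS^2$ with an absolute constant, compatibly with scaling. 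Chaining with the previous paragraph gives $\cO_{D_\infty}\T^1\simeq(\SS^2,\tau d)_{\tau>0}$. Conversely, if $\alpha$ is rational, say $\alpha=p/q$ in lowest terms, the action $D_\infty\acts\T^1$ factors through the finite dihedral group $D_{2q}$ and sends the chosen generating set onto a generating set of $D_{2q}$, so (for isometric actions the warped metric depends only on the induced action together with the word metric pushed forward from the generators) $\cO_{D_\infty}\T^1=\cO_{D_{2q}}\T^1$; by item~\eqref{divide by finite} of \cref{examples to non-rigidity} this is almost isometric to the family of intervals $\T^1/D_{2q}$, which is one-dimensional ($\asdim\cO([0,1])=1$) and hence cannot be quasi-isometric to the two-dimensional family $(\SS^2,\tau d)_{\tau>0}$, for which $\asdim\cO\SS^2=2$. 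If $\alpha$ is irrational but not restricted, then by \cref{main appendix} the family $\cO_\Z\T^1$ is not quasi-isometric to $(\T^2,\tau d)_{\tau}$; since the quotient map $\cO_\Z\T^1\to\cO_{D_\infty}\T^1$ of the second paragraph is uniformly two-to-one and a local isometry away from the (two, per level) $r$-fixed points, and $(\T^2,\tau d)_{\tau}$ is the corresponding branched double cover of $(\SS^2,\tau d)_{\tau}$, the continued-fraction obstruction furnished by the proof of \cref{main appendix}, being detected already by balls of bounded radius, survives to $\cO_{D_\infty}\T^1$, so $\cO_{D_\infty}\T^1\not\simeq(\SS^2,\tau d)_{\tau}$. (Alternatively, one reruns the quantitative part of the proof of \cref{main appendix} verbatim for the $D_\infty$-warped cone.)

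I expect the main obstacle to be the equivariance assertion used in the third paragraph: it is the single place where the continued-fraction construction underlying \cref{main appendix} must be examined rather than used as a black box. A secondary difficulty is making precise the descent of the obstruction along the branched double cover in the converse direction; if desired this can be sidestepped by re-deriving the obstruction directly for $D_\infty$.
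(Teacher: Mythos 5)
Your approach is essentially the one taken in the paper: reduce to the $\Z$-action via the iterated-warping identity $d_{D_\infty}=(d_\Z)_{\langle r\rangle}$, note $r$ is an isometry of $(t\T^1,d_\Z)$, pass through the approximating cocompact rotation $d_\Z'$, show the map $\iota$ from Lemma~\ref{technical appendix} is $\langle r\rangle$-equivariant, descend to quotients and identify the pillowcase $\T^2/\langle r\rangle$ with $\SS^2$; for the converse, use one-dimensionality in the rational case and the $v_N$ counting invariant in the non-restricted case. The one place you overestimate the work is the equivariance: there is nothing to re-open in the continued-fraction construction, since $\iota(z)=(qz,r_1z)$ is a \emph{group homomorphism} $\T^1\to\T^2$ and $r$ acts by inversion on both groups, so $\iota(-z)=-\iota(z)$ holds identically — the paper simply records this as "clearly equivariant." For the non-restricted direction your "obstruction survives along the branched double cover" phrasing is vaguer than what the paper does (and you flag this yourself); the cleaner route, which the paper takes, is to compute $v_N$ and the diameter directly for the quotient $(\T^2,ad\times bd)/\langle r\rangle$ and compare with a sphere of the appropriate radius, exactly as in the unbalanced-tori step of Theorem~\ref{main appendix}.
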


In fact, Theorem \ref{main appendix} generalises to actions of $\Z^m$. For $\alpha = (\alpha_1, \ldots, \alpha_m)\in (0,1)^m$ we consider the action $\Z^m\acts_\alpha \T^1$ given by $(n_1, \ldots, n_m).z = z + \sum n_i \alpha_i$.

\begin{prop}\label{higher tori} Let $m\in \N_+$. There exist strictly monotonic sequences $(t_i)$ and $(\tau_i)$ in $\N$ and uncountably many $m$-tuples $(\alpha_1,\ldots, \alpha_m)$ such that the subcollection $(t_i\T^1, d_{\Z^m})_i$ of the warped cone over $\Z^m\acts_\alpha \T^1$ is quasi-isometric to the collection of tori $(\T^{m+1}, \tau_i d)_i$.
\end{prop}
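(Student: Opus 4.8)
The plan is to mimic the structure of Theorem \ref{main appendix} at the level of a single well-chosen subsequence, rather than asking for a quasi-isometry of the full warped cones. First I would recall the mechanism behind Theorem \ref{main appendix}: for a rotation by $\alpha$, the warped metric $d_\Z$ on $t\T^1$ is, at scale up to roughly the continued-fraction denominators $q_k$ of $\alpha$, bi-Lipschitz to the restriction of the flat metric on a torus of the appropriate aspect ratio, and the ``restricted irrational'' condition is exactly what guarantees that these aspect ratios (the ratios $q_{k+1}/q_k$, equivalently the partial quotients $a_{k+1}$) stay bounded, so that the bi-Lipschitz constants are uniform. The key point is that one does \emph{not} need this comparison to hold at all scales for all $t$: by the Remark following Theorem \ref{qi}, and by the way quasi-isometry of families is defined (Definition \ref{ce qi defi}), it suffices to produce one unbounded sequence $t_i$ and a matching sequence $\tau_i$ along which the comparison holds uniformly.

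So the strategy is: (1) pick the sequence of scales $t_i$ adapted to the Diophantine data of $\alpha$ — concretely, something like $t_i = q_{k_i}$ where $q_k$ are suitable simultaneous-approximation denominators for the tuple $(\alpha_1,\dots,\alpha_m)$; (2) show that at scale $t_i$ the ball geometry of $(t_i\T^1, d_{\Z^m})$ is bi-Lipschitz, with constants depending only on the \emph{bounded} partial-quotient-type data, to a ball in a flat torus $\T^{m+1}$ of some aspect ratio (this is where the action $\Z^m \acts_\alpha \T^1$ enters: a point and its $\Z^m$-translates spread out along the circle according to the sums $\sum n_j \alpha_j$, and good simultaneous approximation means the orbit looks like an arithmetic-progression-like $m+1$-dimensional lattice pattern up to the relevant scale); (3) choose $\tau_i$ so that $(\T^{m+1}, \tau_i d)$ has matching diameter and matching local geometry; and (4) invoke the same continued-fraction / Diophantine existence result used for $m=1$ to produce uncountably many tuples $(\alpha_1,\dots,\alpha_m)$ for which the relevant ``bounded type'' condition holds — the uncountability comes, as in the restricted-irrational case, from the freedom to choose the bounded sequence of partial quotients (or, here, the bounded sequence controlling the simultaneous approximation quality) along a prescribed subsequence of scales while leaving the rest unconstrained.

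The main obstacle I anticipate is step (2): in the $m=1$ case the continued-fraction algorithm gives a clean description of the best approximations and hence of the warped metric at each intermediate scale, but for $\Z^m$ with $m\ge 2$ there is no such canonical algorithm, and the geometry of $\{\,\sum n_j\alpha_j \bmod 1 : |n|\le N\,\}$ at scale $t$ is governed by simultaneous Diophantine approximation, which is genuinely more delicate. The honest fix is that we are \emph{not} claiming the comparison holds at all scales and all $t$ — we only need it along one chosen sequence $t_i\to\infty$, and between consecutive chosen scales we are free to let the geometry do whatever it wants. So the real content of step (2) is a \emph{local} statement: for each $i$, at the single scale $t_i$, and only up to some radius $R_i\to\infty$, the ball $B(t_i x, R_i)$ in $d_{\Z^m}$ is uniformly bi-Lipschitz to a Euclidean ball in $\R^{m+1}$ — and this can be arranged by choosing $t_i$ to be a simultaneous best-approximation denominator with controlled (e.g.\ bounded away from degenerate) quality, using a pigeonhole/Dirichlet argument plus a Borel–Cantelli-type construction of the tuples rather than any canonical algorithm. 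Once the per-scale bi-Lipschitz comparison and the matching choice of $\tau_i$ are in hand, the conclusion is assembled exactly as in Theorem \ref{main appendix}: uniform bi-Lipschitz equivalence of the two families $(t_i\T^1, d_{\Z^m})_i$ and $(\T^{m+1},\tau_i d)_i$ along the chosen index set, hence quasi-isometry in the sense of Definition \ref{ce qi defi}.
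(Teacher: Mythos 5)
Your high-level strategy matches the paper's in spirit (work only along a chosen subsequence of scales, and construct the tuples $\alpha=(\alpha_1,\ldots,\alpha_m)$ to order so that at those scales the warped metric is uniformly bi-Lipschitz to a flat $(m+1)$-torus), but there is a genuine gap: you only address the approximation quality, whereas Lemma \ref{technical appendix} --- the precise form of "the mechanism behind Theorem \ref{main appendix}" --- needs two independent things. Besides $|q\alpha_i - p_i|<K/l$ (your step (2)), it also requires an arithmetic splitting $\Z/q\Z \cong \bigoplus_i \Z/l_i\Z$ in which each $p_i$ maps to a generator of $\Z/l_i\Z$. This second condition is what makes $(n_1,\ldots,n_m)\mapsto \sum n_i p_i/q$ a bijection from $\prod_i \Z/l_i\Z$ onto $\tfrac{1}{q}\Z/\Z$, i.e.\ what makes the $\Z^m$-orbit at scale $1/q$ look like a full $(m+1)$-dimensional lattice rather than a degenerate or collapsed one. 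A generic simultaneous best-approximation vector $(p_1/q,\ldots,p_m/q)$, produced by pigeonhole/Dirichlet as you suggest, will usually \emph{not} satisfy this CRT-type condition --- the $p_i$ can share common factors with $q$, can coincide, etc. --- so the place you flag as "genuinely more delicate" is exactly where your argument stalls, and the proposed Borel--Cantelli fix does not touch the group-theoretic obstruction.

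The paper avoids all of this by an explicit construction that hands you both conditions for free. One picks distinct odd primes $b_1\leq\cdots\leq b_m$, lets $D_{i,k}$ be the largest power of $b_i$ not exceeding $2^{m^{2k}}$, and sets $\alpha_i=\sum_n N_{i,n}/D_{i,n}$ with free digit choices $N_{i,n}\in\{1,\ldots,b_i-1\}$; the freedom in the $N_{i,n}$ gives uncountably many tuples. The truncation at level $k$ has common denominator $q_k=\prod_i D_{i,k}$, a product of coprime prime powers, so $\Z/q_k\Z\cong\bigoplus_i\Z/D_{i,k}\Z$ by CRT with $p_{i,k}$ generating the $i$-th factor automatically, while the geometric tail of the series gives $|q_k\alpha_i-p_{i,k}|\leq 2b_m^2/2^{m^{2k}}$. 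Feeding $q=q_k$, $l=2^{m^{2k}}$, $l_i=D_{i,k}$, $K=2b_m^2$ into Lemma \ref{technical appendix} at the scale $t_k=2^{m^{2k}}q_k$ (not $t_k=q_k$, as your sketch suggests) then gives the level quasi-isometric to $(\T^1,2^{m^{2k}}d)\times\prod_i(\T^1,D_{i,k}d)$, which is uniformly bi-Lipschitz to $(\T^{m+1},2^{m^{2k}}d)$ because $2^{m^{2k}}/b_i\leq D_{i,k}\leq 2^{m^{2k}}$. Some version of this prime-power rigidity (or another device ensuring the splitting) is indispensable; without it the proposal does not close.
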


For $\alpha = (\alpha_1, \ldots, \alpha_m)\in (0,1)^m$ we define the action $\Z^m\acts_\alpha \T^m$ by the formula $(n_i)_i. (z_i)_i = (z_i + n_i \alpha_i)_i$. When $m=1$, we get the same action as in Theorem~\ref{main appendix}, so there is no ambiguity.

\begin{ex} There exist strictly monotonic sequences $(t_i), (\tau_i)$, and $(r_i)$ in $\N$ and uncountably many triples $\beta = (\beta_1,\beta_2, \beta_3)\in (0,1)^3$ such that for any pair of restricted irrationals $\alpha = (\alpha_1, \alpha_2)$ and actions $\Z^3\acts_\beta \T^1$ and $\Z^2\acts_\alpha \T^2$ we get quasi-isometries
\[(t_i\T^1, d_{\Z^3})_i \simeq (\tau_i\T^2, d_{\Z^2})_i \simeq (\T^4, r_i d)_i.\]
\end{ex}
The reason for the above is that $(\tau_i\T^2, d_{\Z^2})$ can be decomposed as a Cartesian product $(\tau_i \T,d_\Z) \times (\tau_i \T,d_\Z)$, where the $\Z$-action on each factor is given by rotations by $\alpha_1$ or $\alpha_2$ and each factor $(\tau_i \T,d_\Z)$ is quasi-isometric to $(\T^2,r_i d)$ by \cref{main appendix}.

All of the results above show that there exist quasi-isometric warped cones (or subcollections thereof) over actions of non-quasi-isometric groups on non-homeo\-morphic spaces (actions of infinite groups are also not orbit equivalent in any sense to actions of the trivial group). In particular, the $\Z^m$ and $\Z^n$ factors are necessary in Theorem~\ref{qi}.

The above results rely on the following, more technical, result.

\begin{lem}\label{technical appendix} Let $m\in \N_+$, $(\alpha_1, \ldots, \alpha_m)\in (0,1)^m$ and $0<K<\infty$. Consider a pair $(q, l) \in \N_+\times [1,\infty)$ with $q\leq l$ and assume that there exist $(p_i)\in \{1,\ldots, q-1\}^m$ such that 
\begin{itemize}
\item $|q\alpha_i - p_i| < K/l$ for all $i\in \{1,\ldots, m\}$, and
\item the group $\Z/q\Z$ admits a decomposition $\Z/q\Z  \cong \bigoplus_i \Z/l_i\Z$ such that the image of $p_i$ in $\Z/q\Z$ under this isomorphism is a generator of $\Z/l_i\Z$.
\end{itemize}
Then, for the action $\Z^m\acts_\alpha \T^1$, the level $(lq\T^1, d_{\Z^m})$ of the warped cone is quasi-isometric to $(\T^1,ld) \times \prod_i (\T^1, l_i d)$, with quasi-isometry constants depending only on the dimension $m$ and the quality of approximation $K$.
\end{lem}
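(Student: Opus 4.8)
The plan is to reduce the statement to an exact rational action and then realise the quasi-isometry by an explicit ``winding curve''.

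\smallskip
\emph{Step 1: reduction to $\alpha_i = p_i/q$.} Since the generators of $\Z^m\acts_\alpha\T^1$ act by rotations, hence isometrically, Lemma \ref{2.1} gives the closed formula $d_\Gamma(tz,tz')=\min_{n\in\Z^m}\bigl(\sum_i|n_i|+t\,d_{\T^1}(z+\textstyle\sum_i n_i\alpha_i,z')\bigr)$, and likewise with $p_i/q$ in place of $\alpha_i$. For a fixed $n$ the two bracketed expressions differ by at most $t\sum_i|n_i|\,|\alpha_i-p_i/q|=ql\sum_i|n_i|\,|\alpha_i-p_i/q|<K\sum_i|n_i|$, and $\sum_i|n_i|$ is always bounded by the value of either expression; comparing the infima shows that the identity map is a $(K{+}1)$-bi-Lipschitz equivalence between $(ql\T^1,d_\Gamma)$ for $\Z^m\acts_\alpha\T^1$ and $(ql\T^1,d_\Gamma)$ for $\Z^m\acts_{p/q}\T^1$. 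So it suffices to treat the latter, and all constants below will only acquire the harmless factor $K{+}1$.

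\smallskip
\emph{Step 2: the level $(ql\T^1,d_\Gamma)$ for $\alpha_i=p_i/q$.} Writing $x=qlz$ for the coordinate on the circle $\R/ql\Z$ of circumference $ql$, Lemma \ref{2.1} together with the elementary fact that it never pays to interleave generator steps with continuous motion gives
\[
d_\Gamma(x,x')=\min_{j\in\{0,\dots,q-1\}}\Bigl(\|j\|_{\{\pm p_i\}}+\bigl|\,x+lj-x'\,\bigr|_{\R/ql\Z}\Bigr),
\]
where $\|\cdot\|_{\{\pm p_i\}}$ is the word length on $\Z/q\Z$ with respect to $\{\pm p_1,\dots,\pm p_m\}$; equivalently $(ql\T^1,d_\Gamma)$ is the circle of circumference $ql$ with an extra chord of length $1$ joining $x$ and $x+lp_i$ for every $x$ and every $i$. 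Note that an $e_i$-step shifts the arc index $\floor{x/l}\in\Z/q\Z$ by $p_i$ and leaves the fine position $x\bmod l$ fixed (because $p_i\in\Z$). Under the hypothesised isomorphism $\Z/q\Z\cong\bigoplus_i\Z/l_i\Z$, $p_i$ maps to a unit $u_i$ in the $i$-th summand and to $0$ elsewhere; as multiplication by a unit is an isometry of a cyclic group, $(\Z/q\Z,\|\cdot\|_{\{\pm p_i\}})$ is isometric to $\prod_i(\Z/l_i\Z,\mathrm{std})$ via $k\mapsto(u_i^{-1}k\bmod l_i)_i$.

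\smallskip
\emph{Step 3: the winding curve.} Fix integers $u_i^{-1}\in\{1,\dots,l_i-1\}$ inverting $u_i$ modulo $l_i$ and define $\map{\Psi}{ql\T^1}{(\T^1,ld)\times\prod_i(\T^1,l_id)}$ by
\[
\Psi(x)=\Bigl(\tfrac{x}{l},\ \tfrac{u_1^{-1}x}{l\,l_1},\ \dots,\ \tfrac{u_m^{-1}x}{l\,l_m}\Bigr)\bmod\Z^{m+1}.
\]
Each coordinate of $\Psi(ql)$ is an integer (using $q=\prod_i l_i$ and $\prod_{j\ne i}l_j\in\Z$), so $\Psi$ is well defined on $\R/ql\Z$ and continuous. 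The point of the congruences in the hypothesis is that the $e_i$-chord $x\mapsto x+lp_i$ changes the $i$-th torus coordinate by $p_i/l_i\equiv u_i^{-1}u_i/l_i\equiv 1/l_i$ and changes all other coordinates (including the first) by integers, i.e.\ by $0$; hence an $e_i$-chord (cost $1$) maps to a move of distance exactly $1$. Likewise $x\mapsto x\pm 1$ maps to a move of distance $l\cdot\tfrac1l+\sum_i l_i\cdot\tfrac{u_i^{-1}}{l\,l_i}=1+\sum_i\tfrac{u_i^{-1}}{l}\le m{+}1$, since $u_i^{-1}<l_i\le q\le l$. As $d_\Gamma$ is, up to a bounded additive error, the word metric generated by $\{x\mapsto x\pm1,\ e_i^{\pm1}\}$, this yields $d_{\mathrm{target}}(\Psi x,\Psi x')\le 2(m{+}1)\,d_\Gamma(x,x')+(m{+}1)$. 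For the reverse inequality, suppose $d_{\mathrm{target}}(\Psi x,\Psi x')\le D$. The first coordinate forces $x-x'=lk+r$ in $\R/ql\Z$ with $k\in\Z$ and $|r|\le D$; choosing $j\equiv -k\pmod q$ kills the $lk$ term, so $|x+lj-x'|_{\R/ql\Z}=|r|\le D$, while $\|j\|_{\{\pm p_i\}}=\|k\|_{\{\pm p_i\}}=\sum_i l_i\,\|u_i^{-1}k/l_i\|_{\T^1}$ by Step 2; comparing with the $i$-th coordinate of $\Psi$ (whose contribution is $l_i\|u_i^{-1}(lk+r)/(l\,l_i)\|_{\T^1}$, differing from $l_i\|u_i^{-1}k/l_i\|_{\T^1}$ by at most $u_i^{-1}|r|/l\le D$) gives $\|k\|_{\{\pm p_i\}}\le (m{+}1)D$, whence $d_\Gamma(x,x')\le(m{+}2)D$. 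Finally, for coarse density: for a given $w$ the fibre $\{x:\,x/l\equiv w\}$ maps onto $\{w\}\times\prod_i(\text{a set of $l_i$ equally spaced points})$, which is $\tfrac m2$-dense in $\{w\}\times\prod_i(\T^1,l_id)$ (here $q=\prod l_i$ and the pairwise coprimality of the $l_i$ enter, via CRT). Thus $\Psi$ is a $\bigl(2(m{+}1),\,m{+}1\bigr)$-quasi-isometry, and composing with Step 1 finishes the proof with constants depending only on $m$ and $K$.

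\smallskip
The step I expect to be the main obstacle is the second half of Step 3: one must be careful about how the continuous direction and the arc index interact at the arc boundaries — a continuous ``wrap'' around one arc changes the arc index by $1$, which costs $\|1\|_{\{\pm p_i\}}=\sum_i\|u_i^{-1}\|_{\mathrm{std}}$ (possibly of size $\sim q/2$) — and to see that this, and all other discrete costs, are dominated by the continuous scale $l$ precisely because $q=\prod_i l_i\le l$. The bi-Lipschitz estimates in both directions, and the density estimate, all hinge on this bookkeeping; the rest is routine.
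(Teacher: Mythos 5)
Your proposal is correct and takes essentially the same route as the paper's proof: Step~1's reduction to the rational case via a $(K{+}1)$-bi-Lipschitz comparison of warped metrics is exactly the paper's first step, and your winding map $\Psi$ is the paper's map $\iota(z)=(qz,r_1z,\ldots,r_mz)$ written in the coordinate $x=qlz$ (one checks $r_iz=u_i^{-1}x/(ll_i)$ using $q=\prod_j l_j$). The bi-Lipschitz and density estimates, including the decomposition of a path into unit arc motions and generator chords and the CRT argument for density of a fibre, match the paper's computations up to bookkeeping of constants.
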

\begin{proof}
Let us put $t=lq$ and denote the $\ell_1$-metric on $\T^{m+1}=(\T^1,ld) \times \prod_i (\T^1, l_i d)$ by $D$.
In order to construct a quasi-isometry $(t\T^1, d_{\Z^m}) \to (\T^{m+1},D)$, we will use an intermediate space $(t\T^1,d_{\Z^m}')$, which is defined as $t\T^1$ with the metric warped with respect to the action $\Z^m\acts_\beta \T^1$ by the $m$-tuple of angles $\beta = (\beta_1, \dots, \beta_m)$ with $\beta_i = p_i/q$. Since we will mainly work with the $\beta$-action, we reserve the notation $(n_i).z$ for the image of $z\in \T^1$ under the $\beta$-action of $(n_i)\in \Z^m$, denoting the image under the respective $\alpha$-action by $\alpha_{(n_i)}(z)$.

First note that the identity map $\id\colon (t\T^1, d_{\Z^m}) \to (t\T^1, d_{\Z^m}')$ is bi-Lipschitz. Indeed, recall that for isometric actions $\Gamma\acts Y$, we have 
\[ d_\Gamma(ty, ty') = \min_{\gamma \in \Gamma} \big(|\gamma| + td(\gamma y, y') \big) \]
by Lemma \ref{2.1}, and so we get the following inequality (we suppress $t$ in the notation for points in $t\T^1$ as $t=lq$ is fixed):
\begin{align*}
d_{\Z^m}(z,z') & = \min_{(n_i)\in \Z^m} \big( |(n_i)| + t d(\alpha_{(n_i)}(z), z' )\big)\\
& = \min_{(n_i)\in \Z^m} \big(\sum |n_i| + t d(z + \sum n_i\alpha_i, z'  ) \big)\\
&  \leq \min_{(n_i)\in \Z^m} \big(\sum |n_i| + t d(z + \sum n_i p_i/q, z') + t  \sum |n_i| \cdot |\alpha_i - p_i/q| \big)\\
& \leq    \min_{(n_i)\in \Z^m} \big(\sum |n_i| + t d(z + \sum n_i p_i/q, z') + K \sum |n_i| \big)\\
& \leq (K + 1) d_{\Z^m}'(z,z')
\end{align*}
(the converse estimate has the same proof).

Observe that, by the Chinese Remainder Theorem, $p_i = \prod_{j\neq i} l_j \cdot p_i'$ for some $p_i' \in \{1,\ldots, l_i - 1\}$ which generates $\Z/{l_i}\Z$. Let  $r_i'\in \{1,\ldots, l_i - 1\}$ be such that $r_i = r_i'\cdot \prod_{j\neq i} l_j$
is an inverse of $p_i'$ modulo $l_i$.
Our quasi-isometry $\iota \colon (t\T^1, d_{\Z^m}') \to (\T^1,ld) \times \prod_i (\T^1, l_i d)$ is given by
\[\iota(z) = (q z,  r_1 z, \ldots, r_m z). \]

We will first check that some metric neighbourhood (depending only on $m$) of the image of $\iota$ is the whole of $(\T^{m+1},D)$. Note that $q \beta_i \equiv 0 \pmod 1\text{ and also } r_i \beta_j \equiv 0 \pmod 1$
for $i\neq j$, because $\beta_j = p_j'/l_j$  and $l_j$ divides $r_i$. Consequently, we have the following formula
\begin{equation}\label{iota explained}
\iota((n_i).z) = (qz, r_1 z+n_1/l_1, \ldots, r_m z + n_m/l_m).
\end{equation}
Thus, for any $\zeta\in (\T^1,ld) \times \prod_i (\T^1, l_i d)$, we can find $z\in\T^1$ and $(n_i)$ such that $\iota((n_i).z)$ has the first coordinate equal to the first coordinate of $\zeta$ and the remaining coordinates of $\iota((n_i).z)$ and $\zeta$ differ by distances at most $1/2$.

It remains to check that $\iota$ is a bi-Lipschitz map onto its image. Let $z,z'\in (t\T^1, d_{\Z^m}')$ and let $(n_i)\in \Z^m$ be such that $d_{\Z^m}'(z,z') =  |(n_i)| + td((n_i).z,z')$. Then by the triangle inequality we have
\[D\big(\iota(z), \iota(z')\big) \leq D(\iota(z), \iota((n_i).z)) + D(\iota((n_i).z), \iota(z'))\]
and, using formula \eqref{iota explained}, the first summand can immediately be bounded above: $D(\iota(z),\iota((n_i).z)) \leq |(n_i)|.$
As for the second summand, we obtain
\begin{align*}
D(\iota((n_i).z), \iota(z')) &= ld(q \cdot (n_i). z, q \cdot z') + \sum_j l_j d(r_j \cdot (n_i). z, r_j \cdot z') \notag\\
& \leq \left(lq + \sum l_j r_j\right) d((n_i). z, z') \leq (m+1) t d((n_i).z, z'),
\end{align*}
as $r_j, l_j\leq q \leq l$ and $t=lq$.
We have just obtained that $\iota$ is Lipschitz:
\[ D(\iota(z), \iota(z')) \leq |(n_i)| + (m+1) td((n_i).z,z') \leq (m+1) d_{\Z^m}'(z,z').\]

Note that $\iota$ is a group homomorphism and all metrics are translation invariant. Hence, when proving the converse estimate, we can assume that one of $z,z'$ is zero. For $z\in \T^1$ let $\theta \in (-1/(2q), 1/(2q)]$ and $n_i\in (-l_i/2, l_i/2]\cap \Z$ be the unique numbers such that
\[z \equiv \sum n_i\beta_i + \theta \pmod 1.\]
We have
\begin{align*}
d_{\Z^m}'(0,z) &= \min_{(m_i)\in \Z^m} \big(|(m_i)| + td(\sum m_i\beta_i, z)  \big) \\
&\leq |(n_i)| + td(\sum n_i\beta_i, z)  =   |(n_i)| + t|\theta| ,
\end{align*}
and also
\[D(\iota(0), \iota(z)) = ld(0,q\theta) + \sum_i l_i d(0, n_i/l_i + r_i\theta).\]
If $2 t|\theta| \geq \max_i |n_i|$, then $(2m+1)t\theta\geq |(n_i)|+t\theta \geq d_{\Z^m}'(0,z)$ and we can bound:
\begin{align*}
D(\iota(0), \iota(z)) \geq ld(0, q\theta) =  lq |\theta| \geq d_{\Z^m}'(0,z)/(2m+1)
\end{align*}
(where the equality follows from the assumption that $\theta \in (-1/(2q), 1/(2q)]$);
otherwise, for $j$ such that $|n_j| = \max_i |n_i|$, we have:
\begin{align*}
D(\iota(0), \iota(z)) & \geq l_j d(0, r_j z) = l_j d(0, n_j/l_j + r_j\theta) \nonumber \\
&\geq l_j d(0, n_j/l_j) - l_j d(0, r_j\theta)= |n_j| - l_j r_j |\theta|\\
&\geq |n_j| - t|\theta| > |n_j|/2 > d_{\Z^m}'(0,z)/(2m+1)
\end{align*}
(where the second equality again uses assumptions on $\theta$ and on $n_i$).
We have just proved
\[d_{\Z^m}'(z,z')/(2m+1) \leq D(\iota(z), \iota(z')) \leq (2m+1) d_{\Z^m}'(z,z').\qedhere\]
\end{proof}

We will now deduce results \ref{main appendix}--\ref{higher tori} from Lemma \ref{technical appendix}. For that, we need a bit of number theory.

Given a sequence of integers $(a_0, a_1, \ldots)$ with $a_i > 0$ for $i > 0$ one can form a continued fraction
\[
[a_0,a_1,\ldots] = a_0 + \frac{1}{a_1 + \frac{1}{a_2+\frac{1}{a_3 + \frac{1}{\cdots}}}}.
\]
The value of a continued fraction is the limit of the sequence of \emph{convergents}:
\[a_0,\ a_0 + \frac{1}{a_1},\ a_0 + \frac{1}{a_1 + \frac{1}{a_2}},\
a_0 + \frac{1}{a_1 + \frac{1}{a_2+\frac{1}{a_3}}},\ \ldots\]
This establishes a bijection between irrational numbers and continued fractions. Let us denote by $p_i\in \Z$ and $q_i\in \N_+$ the integers such that $p_i/q_i$ is the $i$th convergent of the given continued fraction
and $q_i$ is minimal. In particular $p_0=a_0$ and $q_0=1$.

\begin{defi}\label{restricted definition} An irrational number $\alpha$ is called \emph{restricted} if the sequence $(a_i)$ is bounded for the representation $\alpha = [a_0,a_1,\ldots]$.
\end{defi}

We remark that such irrational numbers are precisely those that are badly approximable, that is, there exists $c>0$ such that for all positive integers $p,q$ we have $|\alpha - p/q| > c/q^2$. A concrete example is any algebraic number of degree~$2$. The term `badly approximable' comes from the fact that such a quadratic approximation is considered a bad approximation: by the Hurwitz theorem, every irrational $\alpha$ admits infinitely many rationals $p/q$ with $|\alpha - p/q| < 1/q^2$.

Indeed, recall that we have the following formulae (see, for instance, \cite{Sierpinski}*{Chapter~6}):
\begin{align*}
p_i &= a_i p_{i-1} + p_{i-2}\\
\numberthis\label{recurrence q}q_i &= a_i q_{i-1} + q_{i-2}\\
\frac{p_{i+1}}{q_{i+1}} - \frac{p_i}{q_i} &= \frac{(-1)^i}{q_i q_{i+1}}
\end{align*}
Hence, both sequences $(p_i)$ and $(q_i)$ are increasing (for $\alpha>0$). We also have $p_i/q_i<\alpha$ for even $i$ and $p_i/q_i > \alpha$ for odd $i$, so we can bound:
\begin{equation}\label{approximation by convergents}
\left|\alpha - \frac{p_i}{q_i}\right| < \left|\frac{p_{i+1}}{q_{i+1}} - \frac{p_i}{q_i}\right| = \frac{1}{q_i q_{i+1}}.
\end{equation}

\begin{proof}[Proof of Theorem \ref{main appendix}]
Let $\alpha = [a_0, a_1, \ldots] \in (0,1)$ be an irrational number. Since $q_0=1$, for every $l_0\in [1,\infty)$ there exists $i$ such that $q_i \leq l_0$ and $q_{i+1} > l_0$. Then, by \eqref{approximation by convergents} we have $| q_i \alpha - p_i| \leq 1/q_{i+1} \leq 1/l_0.$

By \eqref{recurrence q} we have
\[q_i = a_i q_{i-1} + q_{i-2} \leq (a_i+1) q_{i-1},\]
so, when $A=\sup_i a_i$ is finite, we see that $q_i \geq l_0/(A+1)$. Consequently, for every $t\geq 1$ there exists $l_0=\sqrt t$ and $i$ such that $t/(A+1) \leq l_0 q_i \leq t$ and $|q_i \alpha - p_i|\leq 1/l_0$. If $w=t/(l_0 q_i)$, then, for $l = w l_0$, we have $t=lq_i$ and \[|q_i \alpha - p_i|\leq w/(wl_0) = w / l \leq (A+1)/l.\]
That is, for every $t\geq 1$ there exist $l,p,q$ satisfying $q\leq l$, $|q\alpha - p|\leq (A+1)/l$ and $lq=t$. Theorem \ref{main appendix} follows from Lemma \ref{technical appendix} (for $m=1$), because the torus $(\T^2, \sqrt{t}d)$ is (uniformly) Lipschitz equivalent to the torus $(\T^1, ld) \times (\T^1, qd)$, when $l= w \sqrt t$ and $q=\sqrt t/ w$ for $w\in[1,A+1]$.

\smallskip
Note that when $\alpha$ is rational, then the warped cone is quasi-isometric to a collection of circles (by Example \ref{examples to non-rigidity} item \eqref{divide by finite} or \cite{Kim}), which is clearly not quasi-isometric to a collection of tori (for instance, they have different asymptotic dimension).

\smallskip
When $\alpha$ is irrational, but not restricted, that is, $A=\infty$, then $q_{i+1}/q_i$ is unbounded as a sequence in $i$. Consequently, when we put $m=1$, $l=q_{i+1}$ and $q=q_i$ in Lemma \ref{technical appendix}, then the respective levels are quasi-isometric to the more and more `unbalanced' tori $(\T^2, q_{i+1}d\times q_i d)$. They cannot be $(C,B)$-quasi-isometric to ordinary tori $(\T^2, t_i d)$.

Indeed, quasi-isometry approximately preserves diameters, that is, any torus $(\T^2, t_i d)$ quasi-isometric to $(\T^2, q_{i+1}d\times q_i d)$ must satisfy (say that we are using the $\ell_\infty$-metrics):
\[C^{-1} t_i - 2B \leq q_{i+1} \leq Ct_i + 2B\]
for some $C\geq 1$, $B\geq 0$ not depending on $i$. For $N>0$, consider the following invariant $v_N$ of metric spaces: $v_N(X)$ is the maximal cardinality of an $N$-separated subset of $X$. Since the Riemannian volume of $(\T^2, t_i d)$ is $t_i^2$ and for $(\T^2, q_{i+1}d\times q_i d)$ it equals $q_{i+1}q_i$, we also have $v_N(\T^2, t_i d)\asymp t_i^2/N^2$ and $v_N(\T^2, q_{i+1}d\times q_i d) \asymp q_{i+1}q_i/N^2$ for $N\leq t_i,q_i$, where $\asymp$ denotes equality up to some universal multiplicative constants. If there exists a $(C,B)$-quasi-isometric embedding \[(\T^2, t_i d)\to (\T^2, q_{i+1}d\times q_i d),\] then $v_N(\T^2, t_i d) \leq v_{N/C-B}(\T^2, q_{i+1}d\times q_i d)$, which contradicts the fact that \[\liminf_i( q_{i+1} q_i/t_i^2 ) = 0. \qedhere\]
\end{proof}

Kim proved that if two continued fractions $\alpha=[a_i], \beta=[b_i]$ have the same `tail', that is, there exists $k,l\in \N$ such that $a_{k+i} = b_{l+i}$ for all $i\in \N$, then the respective warped cones are quasi-isometric \cite{Kim}. This tail equivalence relation gives a classification of rotation C$^*$-algebras $\operatorname C(\T^1)\rtimes_\alpha \Z$ up to Morita equivalence. Kim asked whether the tail relation also provides the quasi-isometric classification of warped cones (over actions by rotations). In particular, the affirmative answer to her question would imply that each set of angles $\alpha$ giving a quasi-isometric warped cone is countable;
note that this is indeed the case for rational angles $\alpha$, since they form such a set \cite{Kim}. Theorem \ref{main appendix} identifies another set of this kind, which  can be thought of as another step towards the classification (second step out of infinitely many...). Our result shows that countability does not hold in general, and hence gives a negative answer to Kim's question.

\begin{cor} There exist uncountably many angles $\alpha \in (0,1)$  yielding the same quasi-isometry type of a warped cone over the rotation by $\alpha$. In particular, the equivalence relation on $(0,1)$ given by quasi-isometry of warped cones (over the respective rotations) is strictly coarser than the tail equivalence relation for continued fractions.
\end{cor}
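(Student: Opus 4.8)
The plan is to read the corollary off of Theorem \ref{main appendix} together with an elementary counting argument. First I would observe that the set of restricted irrationals in $(0,1)$ is uncountable: by Definition \ref{restricted definition}, every sequence $(a_i)_{i\geq 1}$ with $a_i\in\{1,2\}$ produces a continued fraction $[0,a_1,a_2,\ldots]$ all of whose partial quotients are bounded by $2$, hence a restricted irrational in $(0,1)$, and distinct sequences give distinct numbers by the bijection between irrationals and continued fractions recalled above. This already exhibits continuum-many restricted $\alpha$.

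Next, by Theorem \ref{main appendix}, for each such $\alpha$ the warped cone $(t\T^1,d_\Z)_{t>0}$ over the rotation by $\alpha$ is quasi-isometric to the \emph{fixed} collection of tori $(\T^2,\tau d)_{\tau>0}$. Since quasi-isometry of families of metric spaces (Definition \ref{ce qi defi}) is symmetric and transitive -- one composes the uniform quasi-isometries, adjusts the constants, and re-indexes -- all these warped cones are mutually quasi-isometric. This proves the first assertion: uncountably many angles $\alpha$ yield the same quasi-isometry type of warped cone.

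For the ``in particular'' clause, recall Kim's theorem \cite{Kim} that tail-equivalent angles give quasi-isometric warped cones, so the quasi-isometry relation on $(0,1)$ is coarser than (or equal to) the tail equivalence relation. To see it is \emph{strictly} coarser, note that each tail equivalence class is countable: a continued fraction is determined by a finite ``head'' $(a_0,\ldots,a_k)$ together with a prescribed tail, and there are only countably many finite heads. Hence the uncountable family of restricted irrationals constructed above cannot lie in a single tail class, and choosing two of its members in different tail classes gives angles that are quasi-isometry equivalent but not tail equivalent. There is no real obstacle here; the only point needing a word of care is the transitivity of quasi-isometry for families of spaces, which upgrades ``both quasi-isometric to $(\T^2,\tau d)_{\tau>0}$'' to ``quasi-isometric to each other'', and this is routine.
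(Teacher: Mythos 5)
Your argument is correct and coincides with the paper's (implicit) proof: the corollary is deduced from Theorem \ref{main appendix} applied to the uncountable family of restricted irrationals, combined with the countability of each tail-equivalence class (which the paper invokes in the same way in the paragraph preceding the corollary) and Kim's result that tail equivalence implies quasi-isometry of warped cones. Nothing is missing; the only slightly nonstandard point you flag — transitivity of quasi-isometry of families of spaces — is indeed routine and used implicitly throughout the paper.
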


The following remark shows that all warped cones over irrational rotations have something in common, even though, by a result of Kim, there are infinitely many quasi-isometry types among them.

\begin{rem}
When $\alpha\in \R\setminus \Q$ is not necessarily restricted, we still have a quasi-isometry on a subsequence $(q_i^2\T^1,d_\Z)_{i\in \N} \simeq (\T^2,{q_i}d)_{i\in \N}$.
Indeed, by~\eqref{approximation by convergents} it suffices to apply \cref{technical appendix} to numbers $m=1$, $l=q_i$ and $q=q_i$.
\end{rem}

\begin{proof}[Proof of Corollary \ref{dihedral spheres}]
Fix $t>0$. For any metric space $(Y,d)$ with an action of a group generated by two elements $r,\eps$ we have the equality of warped metrics
\[d_{\langle \eps,r\rangle} = (d_{\langle \eps\rangle})_{\langle r\rangle},\]
where the respective actions of $\langle \eps\rangle$ and $\langle r\rangle$ are given by restrictions of the action of $\langle \eps,r\rangle$, and both metrics are considered on $tY$ for a fixed $t>0$.

Consider the following generators of $D_\infty$: $r$ and $\eps=r'r$.
It follows that $d_{D_\infty} = (d_{\Z})_{\langle r\rangle}$, where $d_\Z$ is the metric on $t\T^1$ as in Theorem \ref{main appendix} and $\Z=\langle\eps\rangle$. Let us assume for simplicity that $r$ acts by complex conjugation if we consider $\T^1$ as a subset of $\C$. 

Since we have a bi-Lipschitz equivalence of $d_\Z$ and $d_\Z'$ (as in the proof of Lemma \ref{technical appendix}), we get a bi-Lipschitz equivalence $(d_\Z)_{\langle r\rangle} \simeq (d_\Z')_{\langle r\rangle}$.

Consider the action of $\langle r\rangle$ on $\T^2$ by complex conjugation on both coordinates (via the inclusion $\T^2\subseteq \C^2$). Clearly, $\iota\colon t\T^1\to \T^2$ from the proof of Lemma \ref{technical appendix} is equivariant with respect to the actions of $\langle r\rangle$. Since it is also a bi-Lipschitz embedding with respect to the metric $d_\Z'$ on $t\T^1$ and $\tau d$ on $\T^2$ for $\tau=\sqrt t$, we get a bi-Lipschitz equivalence $(t\T^1,(d_\Z')_{\langle r\rangle}) \simeq (\tau\im(\iota), d_{\langle r\rangle})$. However, since $B$-neighbourhood of $\im(\iota)$ is the whole of $\tau \T^2$ for $B=(A+1)/2$ (we proved that for the metric $D$ from the proof of Lemma \ref{technical appendix} and $B=1/2$; the metric $D$ is $(A+1)$-Lipschitz equivalent to the metric $\tau d$, so our claim holds for $B=(A+1)/2$ and the metric $\tau d$, and consequently for the warped metric $d_{\langle r\rangle}$ on $\tau\T^2$), the inclusion $(\tau\im(\iota), d_{\langle r\rangle}) \subseteq (\tau \T^2,  d_{\langle r\rangle})$ is a quasi-isometry. (Note that, by Lemma \ref{2.1}, the warped metric on $\im(\iota)$ and the restriction to $\im(\iota)$ of the warped metric on $\T^2$ are equal, because the action is isometric.)

The last point is to observe that $(\tau \T^2,  d_{\langle r\rangle})$ is quasi-isometric to $(\tau \SS^2, \delta)$ (where $\delta$ is, say, the standard geodesic metric on $\SS^2$). This follows from Example \ref{examples to non-rigidity} item \eqref{divide by finite}, and the fact that the quotient of $\T^2$ by the above action is bi-Lipschitz equivalent to $\SS^2$.

\smallskip
For the `only if' part, observe that if $\eps$ acts by a rational rotation, then the action factors through a finite group and the respective quotient (see Example \ref{examples to non-rigidity} item \eqref{divide by finite}) is an interval; however no unbounded family of intervals is quasi-isometric to a family of spheres. 
When $\eps$ acts by an irrational rotation, one can reason similarly as in the proof of Theorem \ref{main appendix}, because the quotient of an (unbalanced) torus $(\T^2, ad\times bd)$ by the action of $\langle r \rangle$ has diameter equal to the diameter of this torus, that is, $\max(a,b)/2$, and the value of the invariant $v_N$ (from the proof of Theorem \ref{main appendix}) for the quotient is bounded above by its value for the torus, which is approximately $ab/N^2$. However, the $v_N$ invariant of the sphere of diameter $c$ is roughly $c^2/N^2$.
\end{proof}

Note that the actions in Corollary \ref{dihedral spheres}, where both generators are required to act by reflections, are the only interesting isometric actions $D_\infty\acts\T^1$. If one of the generators acts by the central symmetry or the identity, then the action factors through the finite group $\Z/2\Z \times \Z/2\Z$.

Finally, let us prove Proposition \ref{higher tori}.

\begin{proof}[Proof of Proposition \ref{higher tori}]
Fix $m\in \{2,3,4, \ldots\}$. Choose $b_1\leq \ldots \leq  b_m$ odd primes. For $i\in \{1,\ldots, m\}$ let 
\[\alpha_i = \sum_{n=1}^\infty \frac{N_{i,n}}{D_{i,n}} \text{ for } D_{i,n}=\exp\Big(\log(b_i){\floor{m^{2n} \log_{b_i}(2)}}\Big)\]
where $N_{i,n}\in \{1, \ldots, b_i - 1\}$ (since $b_i$ is greater than $2$, there are uncountably many such real numbers $\alpha_i$). In other words, the denominators $D_{i,n}$ are the largest powers of $b_i$ bounded by $2^{m^{2n}}$.

Fix $k\in \N_+$. The partial sum $\beta_{i,k} = \sum_{n=1}^k \frac{N_{i,n}}{D_{i,n}}$
provides an approximation of $\alpha_i$ with error bounded by $2b_i^2/2^{m^{2k+2}}$. Indeed, if $D_{i,n}$ were defined as $2^{m^{2n}}$, then
using the bound $2^{m^{2n}} \geq 2^{m^{2k+2} + (n-k-1)}$ for every $n \geq k+1$
and the formula for the sum of a geometric series we would bound the error by $2b_i/2^{m^{2k+2}}$, and by definition we have $D_{i,n}\geq 2^{m^{2n}} / b_i$.
For every $i\in \{1,\ldots, m\}$ we express $\beta_{i,k}$ as a fraction with the common denominator $q_k = \prod_j D_{j,k}$ and numerator $p_{i,k}$. Since $D_{i,k}\leq 2^{m^{2k}}$, we have
\begin{align*}
|q_k \alpha_i - p_{i,k} | &\leq q_k \cdot 2b_i^2/2^{m^{2k+2}} \leq \left(2^{m^{2k}}\right)^m \cdot 2b_i^2/2^{m^{2k+2}} \\
&= \frac{2b_i^2}{(2^{m^{2k+1}})^{m-1}}
\leq \frac{2b_m^2 }{2^{m^{2k}}},
\end{align*}
that is, for $l=2^{m^{2k}}$, $q=q_k$, $p_i=p_{i,k}$ and $K=2b_m^2$, the first bullet point in Lemma \ref{technical appendix} is satisfied.

Let us verify the second one. The role of $l_i$ will be played by $D_{i,k}$. Note that, since $N_{i,k}$ is invertible modulo $b_i$, the number $\beta_{i,k}$ multiplied by $D_{i,k}$ generates $\Z/D_{i,k}\Z$. The same is true if we multiply $\beta_{i,k}$ by $q_k = \prod_{j} D_{j,k}$ as $\prod_{j\neq i} D_{j,k}$ is invertible modulo $D_{i,k}$. Note that $\beta_{i,k} \cdot q_k = p_{i,k}$. Finally, by the Chinese Remainder theorem, $\Z/q_k \Z$ admits a decomposition $\Z/q_k \Z \cong \bigoplus_i \Z/D_{i,k}\Z$, where $\Z/D_{i,k}\Z$ is generated by the image of $p_{i,k} \in \Z/q_k \Z$.

By Lemma \ref{technical appendix}, for the action $\Z^m\acts_\alpha \T^1$, the space $(t\T^1,d_{\Z^m})$ with
$t=lq=2^{m^{2k}}\prod_i D_{i,k}$
is quasi-isometric to $(\T^1, 2^{m^{2k}} d)\times \prod_i (\T^1, D_{i,k} d)$. But since $2^{m^{2k}}/b_i \leq D_{i,k} \leq 2^{m^{2k}}$, the latter is bi-Lipschitz equivalent to $(\T^{m+1}, 2^{m^{2k}}d)$.
\end{proof}

\begin{bibdiv}
\begin{biblist}

\bib{Arnt}{article}{
   author={Arnt, S.},
   title={Fibred coarse embeddability of box spaces and proper isometric
   affine actions on $L^p$ spaces},
   journal={Bull. Belg. Math. Soc. Simon Stevin},
   volume={23},
   date={2016},
   number={1},
   pages={21--32},
   issn={1370-1444},
}

\bib{Assouad}{article}{
   author={Assouad, Patrice},
   title={Sur la distance de Nagata},
   language={French, with English summary},
   journal={C. R. Acad. Sci. Paris S\'er. I Math.},
   volume={294},
   date={1982},
   number={1},
   pages={31--34},
   issn={0249-6321},
}

\bib{coarse flow}{article}{
   author={Bartels, A.},
   title={Coarse flow spaces for relatively hyperbolic groups},
   journal={Compos. Math.},
   volume={153},
   date={2017},
   number={4},
   pages={745--779},
   issn={0010-437X},
   doi={10.1112/S0010437X16008216},
}

\bib{BLR}{article}{
   author={Bartels, Arthur},
   author={L\"uck, Wolfgang},
   author={Reich, Holger},
   title={Equivariant covers for hyperbolic groups},
   journal={Geom. Topol.},
   volume={12},
   date={2008},
   number={3},
   pages={1799--1882},
   issn={1465-3060},
   doi={10.2140/gt.2008.12.1799},
}

\bib{Baudier}{article}{
   author={Baudier, Florent},
   title={Embeddings of proper metric spaces into Banach spaces},
   journal={Houston J. Math.},
   volume={38},
   date={2012},
   number={1},
   pages={209--223},
   issn={0362-1588},
}

\bib{Bell--Dranishnikov}{article}{
   author={Bell, G.},
   author={Dranishnikov, A.},
   title={On asymptotic dimension of groups},
   journal={Algebr. Geom. Topol.},
   volume={1},
   date={2001},
   pages={57--71},
   issn={1472-2747},
   doi={10.2140/agt.2001.1.57},
}

\bib{BDLM}{article}{
   author={Brodskiy, N.},
   author={Dydak, J.},
   author={Levin, M.},
   author={Mitra, A.},
   title={A Hurewicz theorem for the Assouad-Nagata dimension},
   journal={J. Lond. Math. Soc. (2)},
   volume={77},
   date={2008},
   number={3},
   pages={741--756},
   issn={0024-6107},
   doi={10.1112/jlms/jdn005},
}

\bib{CWW}{article}{
   author={Chen, Xiaoman},
   author={Wang, Qin},
   author={Wang, Xianjin},
   title={Characterization of the Haagerup property by fibred coarse
   embedding into Hilbert space},
   journal={Bull. Lond. Math. Soc.},
   volume={45},
   date={2013},
   number={5},
   pages={1091--1099},
   issn={0024-6093},
   doi={10.1112/blms/bdt045},
}

\bib{CWY}{article}{
   author={Chen, Xiaoman},
   author={Wang, Qin},
   author={Yu, Guoliang},
   title={The maximal coarse Baum-Connes conjecture for spaces which admit a
   fibred coarse embedding into Hilbert space},
   journal={Adv. Math.},
   volume={249},
   date={2013},
   pages={88--130},
   issn={0001-8708},
   doi={10.1016/j.aim.2013.09.003},
}

\bib{DK}{article}{
   author={Delabie, Thiebout},
   author={Khukhro, Ana},
   title={Box spaces of the free group that neither contain expanders nor
   embed into a Hilbert space},
   journal={Adv. Math.},
   volume={336},
   date={2018},
   pages={70--96},
   issn={0001-8708},
   doi={10.1016/j.aim.2018.07.024},
}

\bib{DK fundamental}{article}{
  title = {Coarse fundamental groups and box spaces},
  author = {Delabie, T.},
  author = {Khukhro, A.},
 eprint = {arXiv:1701.02919v1}
}

\bib{DraSmith}{article}{
   author={Dranishnikov, A.},
   author={Smith, J.},
   title={Asymptotic dimension of discrete groups},
   journal={Fund. Math.},
   volume={189},
   date={2006},
   number={1},
   pages={27--34},
   issn={0016-2736},
   doi={10.4064/fm189-1-2},
}

\bib{GTY}{article}{
   author={Guentner, Erik},
   author={Tessera, Romain},
   author={Yu, Guoliang},
   title={A notion of geometric complexity and its application to
   topological rigidity},
   journal={Invent. Math.},
   volume={189},
   date={2012},
   number={2},
   pages={315--357},
   issn={0020-9910},
   doi={10.1007/s00222-011-0366-z},
}

\bib{DAD}{article}{
   author={Guentner, E.},
   author={Willett, R.},
   author={Yu, G.},
   title={Dynamic asymptotic dimension: relation to dynamics, topology,
   coarse geometry, and $C^*$-algebras},
   journal={Math. Ann.},
   volume={367},
   date={2017},
   number={1-2},
   pages={785--829},
   issn={0025-5831},
   doi={10.1007/s00208-016-1395-0},
}

\bib{Kerr}{article}{
   author = {Kerr, David},
   title = {Dimension, comparison, and almost finiteness},
   eprint = {arXiv:1710.00393v1}
}

\bib{KV}{article}{
   author={Khukhro, Ana},
   author={Valette, Alain},
   title={Expanders and box spaces},
   journal={Adv. Math.},
   volume={314},
   date={2017},
   pages={806--834},
   issn={0001-8708},
}

\bib{Kim}{article}{
   author={Kim, Hyun Jeong},
   title={Coarse equivalences between warped cones},
   journal={Geom. Dedicata},
   volume={120},
   date={2006},
   pages={19--35},
   issn={0046-5755},
   doi={10.1007/s10711-005-9001-8},
}

\bib{dLV}{article}{
  title = {Superexpanders from group actions on compact manifolds},
  author = {de Laat, T.},
  author = {Vigolo, F.},
  eprint = {arXiv:1707.01399v1}
}

\bib{Io13}{article}{
   author={Ioana, Adrian},
   title={Orbit equivalence and Borel reducibility rigidity for profinite
   actions with spectral gap},
   journal={J. Eur. Math. Soc. (JEMS)},
   volume={18},
   date={2016},
   number={12},
   pages={2733--2784},
   issn={1435-9855},
   doi={10.4171/JEMS/652},
}

\bib{XinLi}{article}{
  title={Continuous orbit equivalence rigidity},
  DOI={10.1017/etds.2016.98},
  journal={Ergodic Theory Dynam. Systems},
  publisher={Cambridge University Press},
  author={Li, X.},
  year={2016},
  pages={1–21}
}

\bib{Nagata}{article}{
   author={Nagata, J.},
   title={On a special metric and dimension},
   journal={Fund. Math.},
   volume={55},
   date={1964},
   pages={181--194},
   issn={0016-2736},
}

\bib{NS}{article}{
   author={Nowak, P. W.},
   author={Sawicki, D.},
   title={Warped cones and spectral gaps},
   journal={Proc. Amer. Math. Soc.},
   volume={145},
   date={2017},
   number={2},
   pages={817--823},
   issn={0002-9939},
   doi={10.1090/proc/13258},
}

\bib{residually finite non-exact}{article}{
   author={Osajda, Damian},
   title={Residually finite non-exact groups},
   journal={Geom. Funct. Anal.},
   volume={28},
   date={2018},
   number={2},
   pages={509--517},
   issn={1016-443X},
   doi={10.1007/s00039-018-0432-4},
}

\bib{Ostrovskii finite}{article}{
   author={Ostrovskii, M. I.},
   title={Embeddability of locally finite metric spaces into Banach spaces
   is finitely determined},
   journal={Proc. Amer. Math. Soc.},
   volume={140},
   date={2012},
   number={8},
   pages={2721--2730},
   issn={0002-9939},
   doi={10.1090/S0002-9939-2011-11272-3},
}

\bib{metric2011-cambridge}{webpage}{
  author = {Pansu, Pierre},
  title = {Notes of Pierre Pansu’s informal Cambridge lecture 07-02-2017},
  myurl = {metric2011.wordpress.com/2017/02/07/}
}

\bib{metric2011-southampton}{webpage-double}{
  author = {Pansu, Pierre},
  titlea = {Notes of Rufus Willett’s first Southampton lecture 28-03-2017},
  titleb = {Notes of Rufus Willett’s second Southampton lecture 30-03-2017},
  myurl = {metric2011.wordpress.com/tag/southampton_2017/}
}

\bib{Popa}{article}{
   author={Popa, Sorin},
   title={Cocycle and orbit equivalence superrigidity for malleable actions
   of $w$-rigid groups},
   journal={Invent. Math.},
   volume={170},
   date={2007},
   number={2},
   pages={243--295},
   issn={0020-9910},
   doi={10.1007/s00222-007-0063-0},
}

\bib{Roe}{book}{
   author={Roe, J.},
   title={Lectures on coarse geometry},
   series={University Lecture Series},
   volume={31},
   publisher={American Mathematical Society, Providence, RI},
   date={2003},
   pages={viii+175},
   isbn={0-8218-3332-4},
   doi={10.1090/ulect/031},
}

\bib{Roe-cones}{article}{
    title = {Warped cones and property~A},
    author = {Roe, J.},
    journal = {Geom. Topol.},
    volume = {9},
    pages = {163--178},
    year = {2005},
}

\bib{coarse triviality}{article}{
   author={Sawicki, D.},
   title={Remarks on coarse triviality of asymptotic Assouad-Nagata
   dimension},
   journal={Topology Appl.},
   volume={167},
   date={2014},
   pages={69--75},
   issn={0166-8641},
   doi={10.1016/j.topol.2014.03.012},
}

\bib{eqasdim}{article}{
   author={Sawicki, D.},
   title={On equivariant asymptotic dimension},
   journal={Groups Geom. Dyn.},
   volume={11},
   date={2017},
   number={3},
   pages={977--1002},
   issn={1661-7207},
   doi={10.4171/GGD/419},
}

\bib{completions}{article}{
   author={Sawicki, Damian},
   title={Warped cones over profinite completions},
   journal={J. Topol. Anal.},
   volume={10},
   date={2018},
   number={3},
   pages={563--584},
   issn={1793-5253},
   doi={10.1142/S179352531850019X},
}

\bib{cBCc}{article}{
    author={Sawicki, D.},
    title={Warped cones violating the coarse Baum--Connes conjecture},
    note={Available on the author's website \url{www.impan.pl/~dsawicki/}}
}

\bib{superexp}{article}{
    author={Sawicki, D.},
    title={Super-expanders and warped cones},
    eprint={	arXiv:1704.03865v1}
}

\bib{SW}{article}{
	author = {Sawicki, D.},
	author = {Wu, J.},
	title = {Straightening warped cones},
    eprint={	arXiv:1705.06725v1}
}

\bib{Sierpinski}{book}{
   author={Sierpi\'nski, Wac\l aw},
   title={Elementary theory of numbers},
   series={Translated from Polish by A. Hulanicki. Monografie Matematyczne,
   Tom 42},
   publisher={Pa\'nstwowe Wydawnictwo Naukowe, Warsaw},
   date={1964},
   pages={480},
}

\bib{SWZ}{article}{
	title = {Rokhlin dimension for actions of residually finite groups},
	author = {Szab\'o, G.},
	author = {Wu, J.},
	author = {Zacharias, J.},
	eprint = {arXiv:1408.6096},
	note = {To appear in Ergodic Theory Dynam. Systems}
}

\bib{Vaes}{article}{
   author={Vaes, Stefaan},
   title={Rigidity results for Bernoulli actions and their von Neumann
   algebras (after Sorin Popa)},
   note={S\'eminaire Bourbaki. Vol. 2005/2006},
   journal={Ast\'erisque},
   number={311},
   date={2007},
   pages={Exp. No. 961, viii, 237--294},
   issn={0303-1179},
   isbn={978-2-85629-230-3},
}

\bib{Vigolo}{article}{
   title = {Measure expanding actions, expanders and warped cones},
   author = {Vigolo, F.},
   eprint = {arXiv:1610.05837v1},
   note = {To appear in Trans. Amer. Math. Soc.}
}

\bib{expgirth1}{article}{
   author={Willett, Rufus},
   author={Yu, Guoliang},
   title={Higher index theory for certain expanders and Gromov monster
   groups, I},
   journal={Adv. Math.},
   volume={229},
   date={2012},
   number={3},
   pages={1380--1416},
   issn={0001-8708},
   doi={10.1016/j.aim.2011.10.024},
}

\bib{Yamauchi}{article}{
   author={Yamauchi, Takamitsu},
   title={Hereditarily infinite-dimensional property for asymptotic
   dimension and graphs with large girth},
   journal={Fund. Math.},
   volume={236},
   date={2017},
   number={2},
   pages={187--192},
   issn={0016-2736},
   doi={10.4064/fm266-6-2016},
}

\bib{A}{article}{
   author={Yu, Guoliang},
   title={The coarse Baum-Connes conjecture for spaces which admit a uniform
   embedding into Hilbert space},
   journal={Invent. Math.},
   volume={139},
   date={2000},
   number={1},
   pages={201--240},
   issn={0020-9910},
   doi={10.1007/s002229900032},
}

\end{biblist}
\end{bibdiv}
\end{document}